\title{Framing Triangulations and Framing Posets of Planar DAGs with Nontrivial Netflow Vectors}
\author{Jonah Berggren}
\address{Department of Mathematics, University of Kentucky, Lexington, KY, United States}
\email{jrberggren@uky.edu}
\newtheorem{thm}{Theorem}[section]
\newtheorem{prop}[thm]{Proposition}
\newtheorem{lemma}[thm]{Lemma}
\newtheorem{cor}[thm]{Corollary}
\theoremstyle{definition}
\newtheorem{defn}[thm]{Definition}
\newtheorem{remk}[thm]{Remark}
\newtheorem{example}[thm]{Example}
\newtheorem{question}[thm]{Question}
\newtheorem{conjecture}[thm]{Conjecture}
\newtheorem{thmIntro}{Theorem}    
\newtheorem{exIntro}[thmIntro]{Example}
\newcommand{\ucong}{\overset{\textup{int}}{\cong}}
\newcommand{\F}{\mathcal F} 
\newcommand{\G}{\Gamma} 
\renewcommand{\a}{\mathbf{a}} 
\newcommand{\p}{\mathbf{p}} 
\newcommand{\q}{\mathbf{q}} 
\renewcommand{\r}{\mathbf{r}} 
\newcommand{\I}{\mathcal{I}} 
\newcommand{\J}{\mathcal{J}} 
\newcommand{\K}{K} 
\renewcommand{\L}{L} 
\newcommand{\inn}{\text{in}} 
\newcommand{\out}{\text{out}} 
\renewcommand{\P}{P} 
\newcommand{\R}{\mathfrak F} 
\newcommand{\up}{\uparrow}
\newcommand{\down}{\downarrow}
\newcommand{\updown}{\updownarrow}
\newcommand{\routes}{\text{Routes}}
\newcommand{\D}{\Delta}
\newcommand{\shuffle}{\scalerel*{\tikz[xscale=-1]{\useasboundingbox(-.2,-.4)rectangle(.2,.3);
    \draw [line width=1.5, -{[length=2]}, line cap=round, domain=-0.2:0.2 ,variable=\t, smooth, samples=10] plot (\t, {sqrt(0.2*0.2-\t*\t)-0.27});
    \draw [line width=1.5, -{[length=2]}, line cap=round, domain=-0.2:0.2 ,variable=\t, smooth, samples=10] plot (\t, {-sqrt(0.2*0.2-\t*\t)+0.27});}}{(}
}
\newcommand{\realignment}{\scalerel*{\tikz[xscale=-1]{\useasboundingbox(-.2,-.4)rectangle(.2,.3);
    \draw [line width=1.5, -{[length=2]}, line cap=round, domain=-0.2:0.2 ,variable=\t, smooth, samples=10] plot (\t, {-\t*\t/0.2*1.8+.12});
    \draw [line width=1.5, -{[length=2]}, line cap=round, domain=-0.2:0.2 ,variable=\t, smooth, samples=10] plot (\t, {\t*\t/0.2*1.8-.12});}}{(}
}
\newcommand{\rotation}{\scalerel*{\tikz[xscale=-1]{\useasboundingbox(-.2,-.4)rectangle(.2,.3);
    \draw [line width=1.5, -{[length=2]}, line cap=round, domain=-0.2:0.2 ,variable=\t, smooth, samples=10] plot (\t, {\t});
    \draw [line width=1.5, -{[length=2]}, line cap=round, domain=-0.2:0.2 ,variable=\t, smooth, samples=10] plot (\t, {-\t});}}{(}
}
\newcommand{\sh}{\shuffle}
\newcommand{\re}{\realignment}
\newcommand{\ro}{\rotation}
\begin{document}

\maketitle

\begin{abstract}
	The polyhedron of unit flows on a directed acyclic graph (DAG) with one source and one sink is known to admit regular unimodular triangulations induced by framings of the DAG. 
	The dual graph of any of these triangulations may be given the structure of the Hasse diagram of a lattice, generalizing many variations of the Tamari lattice and the weak order. 
	We extend this theory to flow polytopes of DAGs which may have multiple sources, multiple sinks, and nontrivial netflow vectors under certain planarity conditions. We construct a unimodular triangulation of such a flow polytope indexed by combinatorial data and give a poset structure on its maximal simplices generalizing the strongly planar one-source-one-sink case.
\end{abstract}

\setcounter{tocdepth}{1}
\tableofcontents

\section{Introduction}

The polyhedra of unit \emph{flows} on a directed acyclic graph (DAG) are a fundamental object of study in combinatorial optimization with relations to many fields of mathematics~\cite{RT,DH,GP,TG}.
In the field of combinatorics, flow polytopes attract much attention due in part to techniques for calculating volumes~\cite{LIDSKII} and triangulations.

Danilov, Karzanov, and Koshevoy~\cite{DKK} introduced \emph{framings} on DAGs and used them to induce regular unimodular \emph{framing triangulations} on the associated unit flow polytopes (i.e., flows with netflow vector $(1,0,\dots,0,-1)$).
The dual graphs of these framing triangulations were shown to have the structure of the Hasse diagram of a lattice~\cite{vBC} called the \emph{framing lattice} of the framed DAG.
Many interesting classes of lattices appear as framing lattices of unit flow polytopes of DAGs with one source and one sink -- notably,
\begin{itemize}
	\item Tamari lattices and related orders (for example, $\nu$-Tamari lattices~\cite{vBGMY}, grid-Tamari orders~\cite{MGrid}, type A Cambrian $\varepsilon$-lattices~\cite{Reading}, alt $\nu$-Tamari lattices~\cite{CC}), and
	\item the weak order and related orders (for example, $s$-weak orders~\cite{GMPTY} and multipermutation lattices~\cite{BB}).
\end{itemize}
See~\cite{vBC} for more details on the appearance of these and other lattices as framing lattices.
Framing triangulations and framing lattices have shed light on $h^*$-vectors of certain classes of flow polytopes~\cite[Lemma 6.10, Corollary 6.11, Corollary 6.20]{WIWT} and established connections to volume formulas for flow polytopes, order polytopes~\cite{MMS}, cluster algebras~\cite[\S8]{DKK}, and the representation theory of gentle algebras~\cite{WIWT,BS,BERG}.

Thus far, this story has been limited to DAGs with a unique source, a unique sink, and unit netflow vector $(1,0,\dots,0,-1)$. The aim of this paper is to obtain unimodular framing triangulations and framing posets for flow polytopes from DAGs which may have multiple sources, multiple sinks, and nontrivial netflow vectors.
As we will discuss later in the introduction, this is impossible for arbitrary DAGs with arbitrary netflow vectors, so we must restrict to a special class of flow polytopes. We define a class of \emph{strongly planar} DAGs with netflow vectors $(\G,\a)$, generalizing an existing definition in the one-source-one-sink case~\cite{MMS,BM}.
We will use such a strongly planar pair $(\G,\a)$ to induce a unimodular \emph{framing triangulation} on the flow polytope $\F_G(\a)$. Moreover, we will generalize the framing lattices of~\cite{vBC} by providing a poset structure on the simplices of the framing triangulation.

The netflow vector $\a$ of a strongly planar pair $(\G,\a)$ may have arbitrary integer values, but we begin our arguments by using decontraction moves to reduce to the case when each source vertex of $\G$ has netflow $1$, each sink vertex has netflow $-1$, and each internal vertex has netflow 0 (preserving strong planarity along the way). We call such DAGs \emph{strongly planar balanced DAGs} with \emph{unit netflow vectors} and write $\F_\G(1)$ for the unit flow polytope.
In this setting, there must be $m$ sources $\{s_1,\dots,s_m\}$ and $m$ sinks $\{t_1,\dots,t_m\}$. The planar embedding of $\G$ induces an ordering of the sources (resp. sinks) from bottom to top.
We define a \emph{layering} of $\G$ to be a collection $\p=\{p_1,\dots,p_m\}$ of routes (paths from source to sink) such that each $p_i$ travels from the $s_i$ to $t_i$, and none cross each other transversally. 
We prove that the map
\[\J:\p\mapsto\sum_{i=1}^m\I(p_i)\]
from a layering of $\G$ to the sum of indicator vectors of its routes defines a bijection between layerings of $\G$ and lattice points of $\F_\G(1)$; we call $\J(\p)$ the \emph{indicator vector} of $\p$.
Moreover, we define a condition of \emph{compatibility} on layerings: two layerings $\p$ and $\q$ are compatible if no two of their routes cross each other transversally, and without loss of generality $p_i$ lies weakly below $q_i$ for all $i\in[m]$.

A \emph{layering-clique} is a set of pairwise compatible layerings. Given a layering-clique $\K$, its \emph{layering-clique simplex} $\D_1(\K)$ is the convex hull of the indicator vectors of layerings of $\K$. The first main result of the paper is that the layering-clique simplices of maximal layering-cliques model a unimodular triangulation of $\F_\G(1)$ which we call the \emph{framing triangulation} of $\F_\G(1)$:

\begin{thmIntro}[{Theorem~\ref{thm:triangulation}, Corollary~\ref{cor:triangulation}}]
	\label{thm:A}
	If $\mathcal C$ is the set of layering-cliques of a strongly planar balanced DAG $\G$, then
	$\{\Delta_1(\K)\ :\ \K\in\mathcal C\}$
	is a unimodular triangulation of $\F_\G(1)$.
\end{thmIntro}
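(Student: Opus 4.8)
The plan is to verify the four defining properties of a unimodular triangulation for the collection $\{\D_1(\K):\K\in\mathcal C\}$: each $\D_1(\K)$ is a genuine simplex; the simplices cover $\F_1(\G)$; two of them meet in a common (possibly empty) face; and each maximal simplex is unimodular. I will work throughout in the balanced unit-netflow setting of the statement, using the bijection $\J$ between layerings and lattice points of $\F_1(\G)$ established earlier in the excerpt. The passage to an arbitrary strongly planar pair $(\G,\a)$ (Corollary~\ref{cor:triangulation}) should then follow from the decontraction reduction already set up in the introduction: decontraction turns $\F_{\a}(\G)$ into $\F_1(\G')$ for a strongly planar balanced $\G'$ and is a lattice-preserving affine isomorphism of flow polytopes, so a unimodular triangulation transports across it. A first observation I would record is that compatibility linearly orders a layering-clique: if $\p$ and $\q$ are compatible then, after relabeling, $p_i$ lies weakly below $q_i$ for every $i\in[m]$, and "weakly below in every coordinate" is transitive, so a layering-clique $\K=\{\p^{(1)},\dots,\p^{(k)}\}$ can be written with $\p^{(1)}\preceq\cdots\preceq\p^{(k)}$.

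To see that $\D_1(\K)$ is a $(k-1)$-dimensional simplex I would prove that $\J(\p^{(1)}),\dots,\J(\p^{(k)})$ are affinely independent by exhibiting, for each consecutive pair, an edge $e_j$ of $\G$ that lies in the bounded region swept out between $\p^{(j)}$ and $\p^{(j+1)}$ but not in any region swept out between a later consecutive pair. Then the matrix whose rows are the consecutive differences $\J(\p^{(j+1)})-\J(\p^{(j)})$ and whose relevant columns are $e_1,\dots,e_{k-1}$ is triangular with nonzero diagonal, giving both the affine independence and, later, a convenient basis for the unimodularity computation. That such a "private" edge exists for each step is where strong planarity enters: the regions between nested layerings form a nested family of planar cells, so at least one cell is newly created at step $j$.

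For the covering statement, and simultaneously for the face-intersection property, I would use a canonical \emph{peeling} algorithm: given $f\in\F_1(\G)$, at each stage extract the lexicographically-lowest layering $\p$ supported on the current flow (building each $p_i$ greedily along the planar framing, always taking the lowest out-edge with positive remaining flow), subtract the largest scalar multiple of $\J(\p)$ that keeps the flow nonnegative, and repeat. Strong planarity guarantees the lowest layering is well defined and that successive extracted layerings are pairwise compatible, so $f$ lies in $\D_1(\K)$ for the maximal layering-clique $\K$ containing them, and moreover this produces an expression of $f$ as a nonnegative combination of pairwise-compatible layerings. The delicate point --- which I expect to be the main obstacle --- is the \emph{uniqueness} of such an expression: once established, a point in the interiors of both $\D_1(\K)$ and $\D_1(\K')$ would have two such expressions and hence force the relevant layerings to coincide, so the $\D_1(\K)$ fit together into a simplicial complex realizing $\F_1(\G)$. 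The cleanest way to pin uniqueness down is to show the triangulation is regular, by producing a height function on the lattice points --- the natural candidate being a sum over the routes of a strictly convex function of the area under the route in the planar embedding, generalizing the Danilov--Karzanov--Koshevoy height --- and checking that its lower hull has precisely the layering-clique simplices as facets; this reduces to showing that a non-compatible ("crossing") pair of layerings is strictly beaten in height by its sorted pair, which is the technical heart.

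Finally, for unimodularity, I would leverage the triangular structure from the second paragraph: replacing the consecutive differences $\J(\p^{(j+1)})-\J(\p^{(j)})$ by the indicator vectors of the bounded planar cells appearing between $\p^{(j)}$ and $\p^{(j+1)}$ does not change the lattice they span, and these cell-vectors form a $\mathbb Z$-basis of the flow lattice $\mathbb Z^{E}\cap(\operatorname{aff}\F_1(\G)-f_0)$ because, by planar duality, the bounded faces of the planar DAG span its cycle space unimodularly. A backup, should the explicit basis prove fiddly, is to show all maximal layering-clique simplices have equal normalized volume by verifying that the local flip relating two maximal cliques sharing a codimension-one face is volume-preserving, and then to evaluate that common volume on the single maximal clique produced by the greedy peeling, which is visibly unimodular; flip-connectivity of maximal layering-cliques is in any case something one wants for the poset structure developed in the sequel, so this second route costs little extra.
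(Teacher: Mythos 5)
Your plan has the right overall shape (simplexhood, covering, face-fitting, unimodularity), but it leaves the decisive steps as acknowledged placeholders, and it misses the device that makes the paper's proof short. The paper never argues from scratch: it forms the \emph{two-point extension} $\hat\G$ by adjoining a community source $\hat 0$ and sink $\hat 1$, observes that $\hat\G$ is a strongly planar one-source-one-sink DAG with its planar framing, and imports Theorem~\ref{thm:DKK} wholesale. A flow $F$ on $\G$ of strength $S$ corresponds to a \emph{layered} flow $\hat F$ of strength $mS$ on $\hat\G$; DKK gives a unique positive integer clique combination of $\hat F$ into routes of $\hat\G$; Proposition~\ref{lem:layered-horizontal} (a planarity argument: a non-horizontal route would force a transversal crossing inside the clique) shows all these routes are horizontal; and sorting the $S$ routes at each source by $\prec_{s_i}^+$ groups them into $S$ pairwise-compatible layerings. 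Existence, integrality of coefficients, and hence unimodularity all fall out of DKK at once. Your peeling algorithm is essentially a hand-rolled substitute for this, but you do not justify the key point that the greedy lowest route from $s_i$ supported on the residual flow actually terminates at $t_i$ rather than some other sink --- that is exactly what Proposition~\ref{lem:layered-horizontal} is for, and it is not automatic.

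The genuine gap is uniqueness, which you explicitly defer to an unproved regularity/height-function argument ("the technical heart"). This is the content of the theorem: without it you get neither the face-intersection property nor the well-definedness of the triangulation. Note also that uniqueness has \emph{two} layers, and your sketch only gestures at the first. Even after one knows (via DKK on $\hat\G$) that the multiset of horizontal routes with multiplicities realizing $F$ is unique, one must still show that the way this multiset is partitioned into a chain of compatible layerings with coefficients is unique; the paper needs a separate two-case lexicographic counting argument (comparing $\sum_{c:P\in\p^c}a_{\p^c}$ with $\sum_{c:P\in\q^c}b_{\q^c}$ for a well-chosen route $P$) to rule out two distinct layering-clique combinations inducing the same route-level combination. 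Your proposal does not address this second layer at all. Finally, a small caution on your opening observation: $\prec$ on layerings is \emph{not} transitive in general (Figure~\ref{fig:transitive}); ``weakly below in every coordinate'' is only meaningful and transitive after route-compatibility is imposed, which holds inside a clique but should be said.
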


Our next goal is to study the dual graph of the framing triangulation of a strongly planar balanced DAG $\G$ with the intent of obtaining some analog of framing lattices
In the one-source-one-sink case, adjacent simplices of the framing triangulation are related by \emph{rotation} of a route~\cite{WIWT,vBC}. In our new setting, we show that adjacent simplices may be related by one of three types of moves on layering-cliques: rotation, shuffle, or realignment.
Moreover, these moves may be given directions so that if $\K$ and $\L$ are adjacent maximal layering-cliques, then $\L$ is either an up-rotation, up-shuffle, up-realignment, down-rotation, down-shuffle, or down-realignment of $\K$.
In the latter three cases we write $\L\prec\K$. 
\begin{thmIntro}
	The transitive closure of these relations is a poset structure $\preceq$ on the maximal layering-cliques of $\G$, and hence on the simplices of the framing triangulation of $\F_\G(1)$.
\end{thmIntro}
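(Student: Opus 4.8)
The plan is to reduce the statement to an \textbf{acyclicity} claim and then establish that claim with a monotone rank function. Since each down-move is by definition the reverse of the corresponding up-move, it suffices to show that the directed graph $\mathcal D$ with vertex set the maximal layering-cliques of $\G$ and an arc $\K\to\L$ whenever $\L$ is an up-rotation, up-shuffle, or up-realignment of $\K$ contains no directed cycle. Granting this, the transitive closure of the three given relations is an irreflexive transitive relation, hence a strict partial order, and $\preceq$ (its reflexive closure) is a partial order on the maximal layering-cliques; the assertion for simplices follows because $\K\mapsto\D_1(\K)$ is a bijection from maximal layering-cliques to maximal simplices of the framing triangulation (Theorem~\ref{thm:triangulation}) taking adjacency of cliques to adjacency of simplices.

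To prove acyclicity I would build an $\mathbb R$-valued rank on maximal layering-cliques that strictly increases along every arc of $\mathcal D$. Fix a planar embedding of $\G$ realizing its strong planarity, and assign to each edge $e$ a weight $c_e\in\mathbb R$ recording the ``height'' of $e$ in that embedding---for example the signed area of the region between $e$ and a fixed horizontal reference line---chosen so that whenever $p,p'$ are paths with the same endpoints and $p'$ lies weakly above $p$ one has $\langle c,\I(p')\rangle\ge\langle c,\I(p)\rangle$, with equality only if $p'=p$. Such weights exist: $\I(p')-\I(p)$ is a nonnegative combination of the boundary cycles of the bounded regions caught between $p$ and $p'$, all consistently oriented, and an area-type weighting pairs strictly positively with each such cycle. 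Put $h(\p)=\langle c,\J(\p)\rangle=\sum_{i=1}^m\langle c,\I(p_i)\rangle$ for a layering $\p=\{p_1,\dots,p_m\}$, and $\operatorname{rk}(\K)=\sum_{\p\in\K}h(\p)$ for a maximal layering-clique $\K$.

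It remains to check that each arc $\K\to\L$ of $\mathcal D$ satisfies $\operatorname{rk}(\L)>\operatorname{rk}(\K)$. As the framing triangulation is a triangulation, adjacent maximal cliques---their simplices sharing a codimension-one face---share all but one layering, so $\K=\mathcal M\cup\{\p\}$ and $\L=\mathcal M\cup\{\q\}$ for some common $\mathcal M$ and distinct $\p,\q$, and $\operatorname{rk}(\L)-\operatorname{rk}(\K)=h(\q)-h(\p)$; thus we must see $h(\q)>h(\p)$ in each of the three cases. Each up-move is supported on a bounded region of the embedding outside of which $\p$ and $\q$ coincide, and ``up'' designates the direction in which the affected route or routes move to a weakly higher position, so $\J(\q)-\J(\p)$ is a nonzero vector recording a genuinely upward displacement and strict monotonicity of $\langle c,\cdot\rangle$ yields $h(\q)-h(\p)>0$. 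I expect this case analysis to be the main obstacle: for a rotation only one route of a single layering changes and the comparison essentially reduces to the one-source-one-sink situation, whereas for a shuffle or a realignment several routes of the layering may move at once, so one must invoke the precise description of these moves to confirm that their net effect is upward rather than merely follow one route. Once the claim is in hand, a directed cycle in $\mathcal D$ would force $\operatorname{rk}$ to increase strictly around the cycle yet return to its starting value, which is impossible; hence $\mathcal D$ is acyclic, and the reduction of the first paragraph finishes the proof.
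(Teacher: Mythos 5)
Your overall strategy---prove acyclicity of the up-move graph by exhibiting an invariant that strictly increases along every up-move---is the same strategy the paper uses, but the invariant you propose does not work, and the place it breaks is exactly the spot you flag as ``the main obstacle'' and then wave past. The property you impose on the weights $c$ (monotonicity of $\langle c,\I(\cdot)\rangle$ for two paths with the same endpoints, one \emph{weakly above} the other) never applies to the route exchanged in a rotation: by Proposition~\ref{prop:god} the outgoing route $p^i_u$ and the incoming route $q^i_u$ are \emph{incompatible}, i.e.\ they cross transversally, so neither lies weakly above the other, and the sign of $\langle c,\I(q^i_u)-\I(p^i_u)\rangle$ is not determined by the direction of the rotation. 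Concretely, in the square DAG of Figure~\ref{fig:square} the up-rotation exchanges $\alpha_1\beta_2$ for $\alpha_2\beta_1$; with an area-type weighting the change in rank is $(c_{\alpha_2}-c_{\alpha_1})-(c_{\beta_2}-c_{\beta_1})$, which is negative whenever $\beta_1,\beta_2$ are drawn farther apart than $\alpha_1,\alpha_2$. The same failure occurs for shuffles: an up-shuffle moves the route at index $\up_\K(\p^i)$ weakly \emph{up} and simultaneously moves the route at index $\down_\K(\p^i)$ weakly \emph{down}, and ``up-'' versus ``down-'' is decided by comparing the two \emph{indices}, not the magnitudes of the two displacements, so the net area change can again have either sign. (Realignments are the one case your functional does handle, since there the exchanged layerings are comparable through the chain of shared layerings.)

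The direction of a mutation is a lexicographic, not a metric, notion---rotations are oriented by which route takes the lower edge at the \emph{first} point of divergence, shuffles by which of two indices is \emph{larger}---so no single fixed linear functional of the flow vectors can be strictly monotone for all three moves. The paper's proof (Propositions~\ref{prop:mutation-down} and~\ref{prop:partial-order}) repairs exactly this by replacing your $\operatorname{rk}$ with a lexicographic total order: the post-source order $\prec_s^+$ compares two layerings at the largest index where they differ and compares two routes at their first divergence, and then compares two maximal layering-cliques at the largest position (in $\prec_\K$) where they differ. With that order one checks case by case that every down-mutation strictly decreases it, and antisymmetry of the transitive closure follows as in your final paragraph. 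To salvage your write-up you would need to either adopt such a lexicographic invariant or carry out a genuine case analysis for rotations and shuffles; the ``genuinely upward displacement'' claim cannot be made to hold for an area weighting.
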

We call this order the \emph{framing poset} of $\G$. This generalizes the framing lattices of planar-framed DAGs in the one-source-one-sink case~\cite{vBC}.
Unlike the one-source-one-sink case, our framing posets may have multiple greatest and/or least elements, and hence are not lattices (though we suspect that adding a greatest element $\hat1$ and least element $\hat0$ would make them into a lattice).

\begin{exIntro}
	Figure~\ref{fig:small_example} shows an example of a strongly planar balanced DAG $\G$ on the top-left (with all edges oriented from left to right). On the bottom-left is its three-dimensional flow polytope $\F_\G(1)$ with its 9 lattice points labelled by the corresponding layering. On the right are the 8 maximal layering-cliques of $\G$ arranged into the Hasse diagram of the framing poset. Each maximal layering-clique consists of 4 layerings (blue, magenta, green, and orange): the corresponding four vertices of $\F_\G(1)$ form a three-dimensional simplex of the framing triangulation of $\F_\G(1)$. For example, the bottom maximal layering-clique along with its corresponding simplex of $\F_\G(1)$ are highlighted. In this way we obtain the framing triangulation of $\F_\G(1)$ into 8 simplices.
\begin{figure}
	\centering
	\def\svgscale{.19}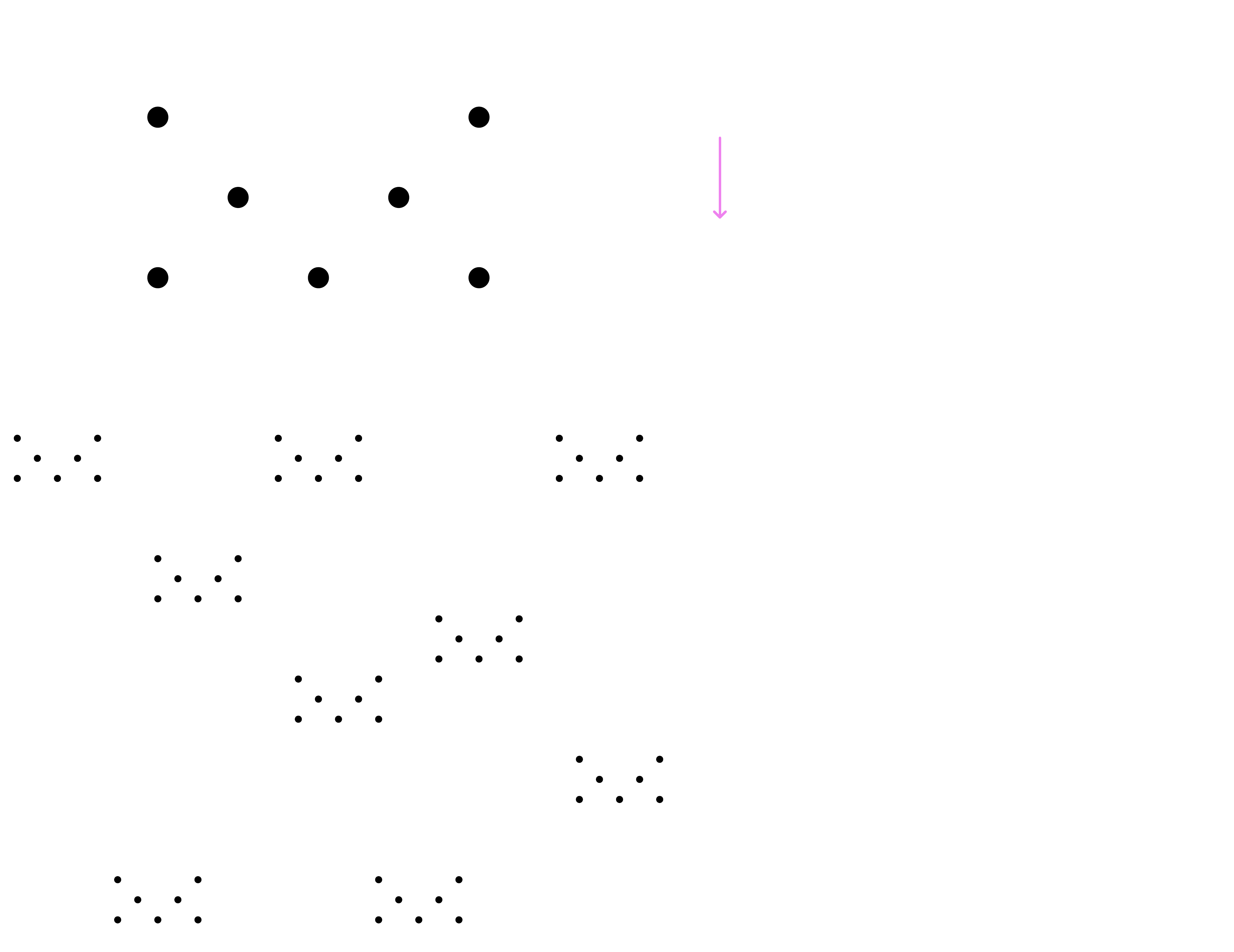
	\caption{A strongly planar balanced DAG with its unit flow polytope and maximal layering-cliques.}
	\label{fig:small_example}
\end{figure}
\end{exIntro}

Finally, we treat the possibility of extending framing triangulation results beyond the strongly planar case.
We give an example of a DAG with integer netflow vector whose flow polytope has no lattice triangulation induced by a pairwise compatibility condition on its integer points, indicating that any theory of framing triangulations for arbitrary flow polytopes must go beyond pairwise compatibility conditions. On the other hand, we expect that framing triangulation results similar to those in this paper and in the one-source-one-sink literature exist for a wider class of flow polytopes.

The structure of the paper is as follows.
In Section~\ref{sec:back1}, we give background on flow polytopes and on framing triangulations in the one-source-one-sink case.
In Section~\ref{sec:strong-balanced} we define the {strongly planar} DAGs with netflows $(\G,\a)$ which we aspire to triangulate. We then use decontraction steps to reduce to the case of a strongly planar balanced DAG with unit netflow vector.
In Section~\ref{sec:framingtriangulations} we define layerings and layering-cliques and prove that these layering-cliques combinatorially describe triangulations on unit flow polytopes of strongly planar balanced DAGs. Much of this work is done by adding a community source and sink in order to apply results from the one-source-one-sink case.
In Section~\ref{sec:mutations} we describe the rotation, shuffle, and realignment moves which connect adjacent maximal layering-cliques, and in Section~\ref{sec:poset} we use these local moves to define the framing poset. Finally, in Section~\ref{sec:obs} we give an example of a DAG with integer netflow vector whose flow polytope admits no lattice triangulation and we conclude the article with some directions for future research.

We conclude the introduction by acknowledging a concurrent project by Gonz\'alez D'Le\'on, Hanusa, and Yip~\cite{DHY} which, among other things, generalizes the theory of framing triangulations to flow polytopes from DAGs with only one negative netflow entry. In particular, their theory involves a notion of \emph{route-matchings} analogous to our layerings, which form a simplicial complex of \emph{cliques} analogous to our layering-clique complex. 
The theories of framing triangulations developed in the present article and~\cite{DHY} agree for strongly planar DAGs with only one sink (i.e., the framed DAGs which are studied in both papers).

\subsection{Acknowledgments}

The author was supported by the NSF grant DMS-2054255. The author thanks Maxwell Hosler for useful discussions.
The author also thanks the anonymous reviewers of Combinatorial Theory, who (among other things) pointed out an issue with the initial version of Section~\ref{sec:obs} which has been corrected in the present document; instead of erroneously claiming that the flow polytope of Example~\ref{ex:k33} has no lattice triangulations, we now claim that it has no lattice triangulations \emph{induced by a pairwise compatibility condition on the vertices}.

\subsection{Notation and Conventions}

If $\alpha$ is an edge of a directed graph, we write $t(\alpha)$ for the beginning (tail) vertex of $\alpha$ and $h(\alpha)$ for the end (head) vertex of $\alpha$. We notate paths left-to-right, so that if $\alpha$ and $\beta$ are edges with $h(\alpha)=t(\beta)$ then $\alpha\beta$ is the path using first $\alpha$, then $\beta$. In almost all cases, when drawing DAGs in figures we will not draw arrow-heads to indicate directions of edges; all edges should be presumed to travel left-to-right in the absence of an arrow-head.
As a convention, the symbol $G$ will refer to a DAG with no choice of embedding, while $\G$ refers to a DAG with a fixed choice of embedding.

\section{Flow Polytopes}
\label{sec:back1}

In this section we give some basic background on triangulations and integral equivalences of lattice polytopes. Then we will define
 DAGs, netflow vectors, and the resulting flow polytopes. We will recall the framing triangulations of unit flow polytopes induced by framings on DAGs with one source and one sink.

\subsection{Some lattice polytope basics}

Following~\cite[\S2]{MMS}, we say that two lattice polytopes $P\subseteq\mathbb R^n$ and $Q\subseteq\mathbb R^m$ are \emph{integrally equivalent} if there is an affine transformation $\phi:\mathbb R^n\to\mathbb R^m$ whose restriction to $P$ is a bijection from $P$ to $Q$ that preserves the lattice, i.e., such that $\phi$ is a bijection between $\mathbb Z^n\cap\text{aff}(P)$ and $\mathbb Z^m\cap\text{aff}(Q)$ where $\text{aff}(-)$ denotes affine span. We also say that the map $\phi$ is an \emph{integral equivalence} and that $P\ucong Q$. Integral equivalence is a notion of ``isomorphism'' on lattice polytopes which is sometimes referred to as \emph{unimodular equivalence} in the literature.

We concern ourselves with triangulations of lattice polytopes arising as flow polytopes.

\begin{defn}\label{defn:triangulation}
	Let $P$ be a $d$-dimensional lattice polytope. A \emph{(lattice) triangulation} of $P$ is a set $\mathcal T$ of simplices such that
	\begin{enumerate}
		\item $P=\cup_{\Delta\in\mathcal T}\Delta$,
		\item if $\Delta\in\mathcal T$ and $\Delta'$ is a face of $\Delta$, then $\Delta'\in\mathcal T$, and
		\item for any $\Delta_1,\Delta_2\in\mathcal T$, the intersection $\Delta_1\cap\Delta_2$ is a (possibly empty) face of both $\Delta_1$ and $\Delta_2$.
	\end{enumerate}
	The triangulation $\mathcal T$ is \emph{unimodular} if each simplex of $\mathcal T$ has normalized volume 1 within its affine span.
\end{defn}

\subsection{DAGs and flow polytopes}

A graph $G=(V,E)$ is a collection of vertices $V$ and a collection of directed edges $E$ between vertices of $V$. The graph $G$ is a \emph{directed acyclic graph (DAG)} if it has no oriented cycles. A \emph{source} of $G$ is a vertex with no incoming edges, and a \emph{sink} of $G$ is a vertex with no outgoing edges. For convenience, we will always assume that every vertex $V$ is incident to at least one edge, so that no vertex is both a source and sink of $G$.
Let $G=(V,E)$ be a directed acyclic graph (DAG) on the vertex set $V=[n]$ and let $\a:=(a_1,\dots,a_n)\in\mathbb Z^{n}$ be a vector of integer weights on the vertex set (with vertex $i$ having weight $a_i$).
We call $\a$ a \emph{netflow vector} of $G$.
A function $F:E\to\mathbb R_{\geq0}$ is a (nonnegative) \emph{$\a$-flow} (or merely a \emph{flow} if the netflow vector $\a$ is understood from the context) if for each $i\in[n]$, the equation
\[
	\sum_{(\alpha:i\to j)\in E}F(\alpha)-\sum_{(\alpha:k\to i)\in E}F(\alpha)=a_i
\]
is satisfied.
The flow $F$ is \emph{integer} (resp. rational) if all coordinates $F(\alpha)$ are integers (resp. rational).

\begin{defn}
	The \emph{$\a$-flow polytope} $\F_G(\a)$ is the polytope consisting of nonnegative $\a$-flows of $G$.
\end{defn}

When $\a$ is an integer vector, all vertices of the $\a$-flow polytope $\F_G(\a)$ are integral points.
It is reasonable to make the following assumptions about our DAG $G$ and netflow vector $\a$:
\begin{enumerate}
	\item If $i$ is a source vertex, then $a_i$ is positive.
	\item If $i$ is a sink vertex, then $a_i$ is negative.
	\item There exists an $\a$-flow of $G$.
\end{enumerate}
If (1) (resp. (2)) is not satisfied, then any edge incident to vertex $i$ cannot be given positive flow, hence vertex $i$ along with all incident edges may be deleted from $G$ without affecting the set of $\a$-flows. If (3) is not satisfied, then the flow polytope $\F_G(\a)$ is empty. If (1), (2), and (3) are all satisfied, we say that $\a$ is a \emph{nondegenerate} netflow vector of $G$. 

\begin{lemma}\label{lem:netflow_sum_zero}
	If $\a$ is a nondegenerate netflow vector of $G$, then $\sum_{i\in[n]}a_i=0$.
\end{lemma}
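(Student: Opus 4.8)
The plan is to sum the flow conservation equations over all vertices and observe that the edge contributions cancel in pairs. By the nondegeneracy hypothesis (specifically condition (3)), there exists an $\a$-flow $F:E\to\mathbb R_{\geq0}$. For each $i\in[n]$ this flow satisfies
\[
	\sum_{(\alpha:i\to j)\in E}F(\alpha)-\sum_{(\alpha:k\to i)\in E}F(\alpha)=a_i.
\]
Summing over all $i\in[n]$, the right-hand side becomes $\sum_{i\in[n]}a_i$, which is exactly the quantity we want to show is zero.

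The key step is then to rewrite the left-hand side as a sum indexed by edges rather than vertices. Each edge $\alpha\in E$, say with $t(\alpha)=i$ and $h(\alpha)=j$, appears exactly twice in the total sum: once contributing $+F(\alpha)$ to the equation indexed by its tail $i$ (in the outgoing sum), and once contributing $-F(\alpha)$ to the equation indexed by its head $j$ (in the incoming sum). Since $G$ is a graph with well-defined tail and head for each edge, these are the only two occurrences, so the net contribution of $\alpha$ to the total sum is $F(\alpha)-F(\alpha)=0$. Hence the left-hand side is $\sum_{\alpha\in E}0=0$, giving $\sum_{i\in[n]}a_i=0$.

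There is essentially no obstacle here; the only thing requiring a moment of care is the bookkeeping in interchanging the order of summation, i.e. checking that the double sum $\sum_{i\in[n]}\bigl(\sum_{t(\alpha)=i}F(\alpha)-\sum_{h(\alpha)=i}F(\alpha)\bigr)$ regroups cleanly as $\sum_{\alpha\in E}\bigl(F(\alpha)-F(\alpha)\bigr)$, which is immediate once one notes that every edge has a unique tail and a unique head. Note also that conditions (1) and (2) of nondegeneracy are not needed for this lemma — only the existence of a flow is used — but stating the result under the full nondegeneracy hypothesis is harmless and matches how the lemma will be applied later.
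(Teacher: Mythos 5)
Your proof is correct and follows exactly the same approach as the paper: pick an $\a$-flow guaranteed by nondegeneracy, sum the conservation equations over all vertices, and note that each edge contributes $+F(\alpha)$ at its tail and $-F(\alpha)$ at its head, so everything cancels. Nothing further is needed.
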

\begin{proof}
	Choose $F$ to be an $\a$-flow of $G$. Then
	\begin{align*}
		\sum_{i\in[n]}a_i&=\sum_{i\in[n]}\left(\sum_{(\alpha:i\to j)\in E}F(\alpha)-\sum_{(\alpha:k\to i)\in E}F(\alpha)\right).
	\end{align*}
	In the sum on the right, for any edge $\alpha:j\to k$ the term $F(\alpha)$ appears once indexed by vertex $j$ and $-F(\alpha)$ appears once indexed by vertex $k$. These terms then cancel out for each edge and the sum is 0.
\end{proof}

For the rest of the paper, we will always assume that all netflow vectors are nondegenerate.

	Intuitively, flow polytopes model flows through networks. For example, one may imagine each edge of $G$ to be a pipe, and the label $F(\alpha)$ to be an amount of water (or any type of flow) traveling through that pipe in the direction of the edge.
	The equation $\sum_{(\alpha:i\to j)\in E}F(\alpha)-\sum_{(\alpha:k\to i)\in E}F(\alpha)=a_i$ at any vertex $i$ translates to the idea that flow is conserved moving through that vertex, and $a_i$ flow is added to the system at that node (or $|a_i|$ is subtracted if $a_i<0$).

\subsection{Framing triangulations for DAGs with one source and one sink}
\label{ssec:backonesource}

In this section, we recall a unimodular triangulation of $\F_G(1)$ given by Danilov, Karzanov, and Koshevoy~\cite{DKK} when $G$ is a DAG with one source and one sink and $\a=(1,0,\dots,-1)$.

For this subsection, $G$ will always be a DAG with one source and one sink. A \emph{nonnegative flow on $G$ of strength $c$} is a flow with netflow vector $(c,0,\dots,0,-c)$ (where $c$ labels the source, and $-c$ labels the sink).
A \emph{unit flow} is a nonnegative flow on $G$ of strength $1$.
We use the symbol $\hat0$ for the source vertex and $\hat1$ for the sink vertex; all other vertices are \emph{internal vertices}.
The \emph{unit flow polytope} $\F_G(1)$ is the polytope of unit flows. More generally, if $m\in\mathbb Z_{\geq1}$ then $\F_G(m\times 1)$ is the polytope of flows of strength $m$, or the dilation of the unit flow polytope by $m$.
Vertices of $\F_G(1)$ are precisely the indicator vectors of \emph{routes} of $G$, i.e., paths from source to sink.
If $v$ is a vertex of $G$, let $\inn(v)$ be the set of incoming edges to $v$ and let $\out(v)$ be the set of outgoing edges of $v$.

\begin{defn}\label{defn:framed-dag}
	Let $G=(V,E)$ be a DAG with one source and one sink. For each internal vertex $v$ of $G$, assign a linear order to the edges in $\inn(v)$ and assign a linear order to the edges in $\out(v)$. This assignment is called a \emph{framing} of $G$, which we denote by $\R$. We call a DAG $G$ with a framing $\R$ a \emph{framed DAG}. If $e$ is less than $f$ in the linear order for $\R$ on $\inn(v)$, we write $e\prec_{\R,\inn(v)}f$ (and similarly for $\out(v)$). When $\R$ and/or $\inn(v)$ or $\out(v)$ is clear, we may drop one or both subscripts.
\end{defn}

We notate a framing $\R$ by labelling every internal half-edge of the DAG $G$ with a number. See Figure~\ref{fig:square} for an example: we have $\beta_1\prec_{\R,\out(v)}\beta_2$ because the tail-label of $\beta_1$ is lower than that of $\beta_2$.

A framing on a DAG induces a notion of pairwise compatibility on its routes:

\begin{defn}\label{defn:compat0}
	Let $v$ be an internal vertex of a framed DAG $(G,\R)$. We define the \emph{post-$v$-order} $\prec_v^+$ on the set of paths from $v$ to the sink $\hat1$ of $G$.
	Let $p$ and $q$ be distinct paths from $v$ to the sink.
	Let $\sigma=\alpha_1\dots\alpha_m$ be the maximal common subpath of $p$ and $q$ beginning at $v$. Say $p$ contains $\sigma\beta$ and $q$ contains $\sigma\gamma$, where $\beta$ is less than $\gamma$ in $\prec_{\R,\text{out}(h(\alpha_m))}$.
	In this case, we say that $p\prec_v^+q$.
	This defines a total order on paths from $v$ to a sink.

	Dually, we define the \emph{pre-$v$-order} $\prec_v^-$ on the set of paths from a source to $v$.
	If $p$ and $q$ are distinct paths from a source to $v$, then let $\sigma=\alpha_1\dots\alpha_m$ be the maximal common subpath of $p$ and $q$ ending at $v$.
	If $p$ contains $\beta\sigma$ and $q$ contains $\gamma\sigma$, where $\beta\prec_{\R,\inn(t(\alpha_1))}\gamma$, then $p\prec_v^-q$.
\end{defn}

If $p$ and $q$ are routes of $(G,\R)$ which both contain an internal vertex $v$, then we say that $p\prec_v^+q$ if $p_v^+\prec_v^+q_v^+$, where $p_v^+$ (resp. $q_v^+$) is the subpath of $p$ (resp. $q$) from $v$ to the sink. If $p$ and $q$ agree after the vertex $v$, then $p=_v^-q$ (even if $p$ and $q$ differ before the vertex $v$). We may similarly write $p\prec_v^-q$ or $p=_v^-q$.

\begin{defn}\label{defn:compat0.5}
	Let $p$ and $q$ be routes of $(G,\R)$ which both contain a vertex $v$. The routes $p$ and $q$ are \emph{incompatible} at $v$ if, without loss of generality, $p\prec_v^-q$ and $q\prec_v^+p$. The routes $p$ and $q$ are \emph{incompatible} if they are incompatible at any shared vertex, otherwise they are \emph{compatible}.
	A \emph{clique} is a set of pairwise compatible routes of $G$.
\end{defn}

Equivalently, $p$ and $q$ are incompatible if without loss of generality there exists a subpath $\alpha_1 R\beta_2$ of $p$ and $\alpha_2 R\beta_1$ of $q$ such that $\alpha_1\prec_{\inn(s(R))}\alpha_2$ and $\beta_1\prec_{\out(t(R))}\beta_2$.
We say that in this case $\sigma$ is an \emph{incompatibility} of $p$ and $q$.
Note that if $p$ and $q$ agree to the left of a vertex $v$, then $p=_v^-q$ and they cannot be incompatible at $v$.

The following definition and theorem connect cliques on $G$ to the unit flow polytope $\F_G(1)$.

\begin{defn}
	If $\K$ is a clique of $(G,\R)$, then a \emph{($\K$-)clique combination} of $G$ is a linear combination
	\[\sum_{p\in\K}a_p\I(p),\]
	where each $a_p\geq0$. It is \emph{positive} if each $a_p>0$, and \emph{unit} if $\sum_{p\in\K}a_p=1$.
	The set of unit flows arising as (necessarily unit) $\K$-clique combinations is the \emph{clique simplex} $\Delta_1(\K)$.
\end{defn}

\begin{thm}[{\cite[Theorem 1]{DKK}}]\label{thm:DKK}
	Let $F$ be a nonnegative flow of a framed DAG $(G,\R)$ with netflow vector $(S,0,\dots,0,-S)$. Then there is a unique positive clique combination $F=\sum_{p\in\K}a_p\I(p)$ for $F$. Moreover, if $F$ is integer-valued, then all coefficients $a_p$ are integers.
\end{thm}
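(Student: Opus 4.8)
The plan is to prove existence, integrality, and uniqueness of the positive clique combination separately, by induction on the number of edges of $G$ that carry positive flow under $F$ (equivalently, on the support of $F$). First I would dispose of the base case: if $F$ is supported on a single route, then that route is trivially a clique, and $F = S\cdot\I(p)$ is the unique positive clique combination. For the inductive step, the key construction is to peel off an extremal route. Starting from the source $\hat0$, at each internal vertex $v$ reached so far, among the outgoing edges carrying positive flow choose the one that is smallest in $\prec_{\R,\out(v)}$; follow these choices to build a route $p$ from $\hat0$ to $\hat1$ entirely inside the support of $F$ (such a route exists because at every internal vertex, conservation of flow forces a positive-flow outgoing edge whenever there is a positive-flow incoming edge). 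Let $a_p$ be the minimum of $F(\alpha)$ over the edges $\alpha$ of $p$. Then $F' := F - a_p\,\I(p)$ is a nonnegative flow of strength $S-a_p$ whose support is strictly smaller than that of $F$ (the minimizing edge now has flow $0$), so by induction $F'$ has a unique positive clique combination $F' = \sum_{q\in\K'} a_q\I(q)$; if $F$ is integer-valued then so is $a_p$ and so is $F'$, so by induction all the $a_q$ are integers as well.

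The substantive part is showing that appending $p$ to $\K'$ yields a \emph{clique}, i.e.\ that $p$ is compatible with every $q\in\K'$. Here I would use the characterization of incompatibility via an incompatibility subpath: if $p$ and some $q$ were incompatible, there would be a maximal common subpath $R$ (from a vertex $v=t(R)'$, say, more precisely with shared endpoints) together with edges $\alpha_1\prec_{\inn}\alpha_2$ entering and $\beta_1\prec_{\out}\beta_2$ leaving, with $p$ using $\alpha_1 R\beta_2$ and $q$ using $\alpha_2 R\beta_1$ (or the symmetric configuration). Since $p$ was built by always taking the $\prec_{\R,\out}$-\emph{smallest} positive-flow outgoing edge, $p$ exiting $h(R)$ via $\beta_2$ while $\beta_1\prec\beta_2$ forces $F'(\beta_1)=0$, so $\beta_1$ is not in the support of $F'$, so $q$ cannot use $\beta_1$ — contradiction. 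This rules out one of the two orientations of an incompatibility; the other orientation, where $q\prec_v^- p$ and $p\prec_v^+ q$, is exactly the one whose $\prec^+$-half is killed by the greedy rule, and I would argue it dually (or observe that the greedy route is $\prec^+_v$-minimal among support routes through any of its vertices, which directly forbids $p\prec_v^+ q$ for a support route $q$). Care is needed that the greedy choice is made consistently so that $p$ is simultaneously $\prec^+_v$-minimal at \emph{all} of its vertices $v$ among routes in the support; this follows because the greedy rule only ever looks forward, so the suffix of $p$ from $v$ is itself the greedy route from $v$.

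The main obstacle is the compatibility argument in the inductive step, and in particular being careful about which of the two symmetric forms of incompatibility can occur and making sure the greedy route's minimality is the \emph{right} kind of minimality to exclude both forms. For uniqueness, I would argue that in \emph{any} positive clique combination $F=\sum_{p\in\K}a_p\I(p)$, the $\prec^+$-minimal route of $\K$ through the source must coincide with the greedy route $p$ constructed above — if $\K$ contained two routes $q_1\prec^+ q_2$ diverging at $h(R)$ with $q_1$ taking the smaller outgoing edge, then for $\K$ to be a clique one checks (again via the incompatibility characterization) that $q_1$ cannot lie below the greedy threshold unless it equals it — and then $a_p$ is forced to be the edge-minimum as above, reducing to uniqueness for $F'$ by induction. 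A cleaner alternative for uniqueness, which I would fall back on if the direct argument gets delicate, is to invoke unimodularity of the clique triangulation abstractly: distinct positive clique combinations would express $F$ as a positive combination of vertices of two different simplices of a triangulation, contradicting that a point lies in the relative interior of at most one simplex; but since the triangulation statement is being built partly on this theorem, I would keep the argument self-contained and present the direct inductive uniqueness proof as the primary route.
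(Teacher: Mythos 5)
First, a point of comparison: the paper does not prove this statement at all --- it is quoted from Danilov--Karzanov--Koshevoy and used as a black box --- so there is no in-paper proof to measure your argument against; it has to stand on its own. Judged that way, the proposal has a genuine gap at its central step. The forward-greedy route (always take the $\prec_{\R,\out(v)}$-smallest positive-flow outgoing edge) is indeed $\prec_v^+$-minimal at each of its vertices among routes in the support, and that rules out the orientation of incompatibility in which some $q$ satisfies $q\prec_v^+p$. But it says nothing about the other orientation, $q\prec_v^-p$ together with $p\prec_v^+q$, because the in-orders $\prec_{\R,\inn(v)}$ never enter your construction; your remark that this orientation ``is exactly the one whose $\prec^+$-half is killed by the greedy rule'' has it backwards. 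Concretely, take the DAG with vertices $\hat0,x,u,w,v,\hat1$ and edges $\epsilon:\hat0\to x$, $e_u:x\to u$, $e_w:x\to w$, $\alpha_2:u\to v$, $\alpha_1:w\to v$, and $\beta_1,\beta_2:v\to\hat1$, framed by $e_u\prec_{\out(x)}e_w$, $\alpha_1\prec_{\inn(v)}\alpha_2$, $\beta_1\prec_{\out(v)}\beta_2$. Give $\epsilon$ flow $2$ and every other edge flow $1$. Your greedy route is $p=\epsilon e_u\alpha_2\beta_1$, and after subtracting $\I(p)$ the residual support is exactly the single route $q=\epsilon e_w\alpha_1\beta_2$; but $q\prec_v^-p$ and $p\prec_v^+q$, so $\{p,q\}$ is not a clique. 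The actual (unique) clique combination here is $\I(\epsilon e_u\alpha_2\beta_2)+\I(\epsilon e_w\alpha_1\beta_1)$, and neither of these routes is forward-greedy. A backward-greedy route fails symmetrically, and in general no single route is extremal for both orders at once, so the ``dual'' patch you sketch does not close the gap.

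The repair requires using both linear orders simultaneously rather than peeling off a one-sidedly extremal route: the standard argument processes internal vertices and, at each $v$, matches the incoming flow (organized by $\prec_v^-$) with the outgoing edges (ordered by $\prec_v^+$) in an order-preserving way, which is precisely what forces the resulting set of routes to be a clique and also delivers uniqueness and integrality. Your fallback for uniqueness via unimodularity of the triangulation is, as you yourself note, circular. The inductive skeleton (peel a route, recurse on a smaller support) is salvageable, but only if the peeled route is selected by a two-sided rule; as written, the existence half of the proof fails on the example above.
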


\begin{remk}
	Note that the phrasing of~\cite[Theorem 1]{DKK} promises (through different terminology) uniqueness of clique combinations, while our phrasing in Theorem~\ref{thm:DKK} promises uniqueness of {positive} clique combinations. For example, in the framed DAG of Figure~\ref{fig:square} the point $\I(\alpha_2\beta_2)$ may be realized through the clique combinations $\I(\alpha_2\beta_2)$ or $\I(\alpha_2\beta_2)+0\I(\alpha_1\beta_1)$. The uniqueness promised in~\cite[Theorem 1]{DKK} considers these to be the same decomposition. We aim to sidestep such quibbles by phrasing the result as uniqueness of \emph{positive} clique combinations.
\end{remk}

Theorem~\ref{thm:DKK} implies that the clique simplices of maximal cliques of $(G,\R)$ form a unimodular triangulation of $\F_G(1)$ (in particular, unimodularity follows from the final statement of the theorem). We call this the \emph{framing triangulation} of $\F_G(1)$ (it is also referred to as the \emph{DKK triangulation} in the literature).
See Example~\ref{ex:square}.
Note that it is also shown in~\cite{DKK} that these framing triangulations are regular, though we do not treat this in the present paper.

\begin{example}\label{ex:square}
	See the framed DAG of Figure~\ref{fig:square}. Arrows are labelled in black and the framing at the unique internal vertex $v$ is labelled in red. Its flow polytope is integrally equivalent to a square, shown on the right of the figure.
	The routes $\alpha_1\beta_2$ and $\alpha_2\beta_1$ are incompatible at the vertex $v$, as $\beta_1\prec_{v}^+\beta_2$ but $\alpha_1\prec_{v}^-\alpha_2$. Any other choice of two routes is compatible. It follows that the two maximal cliques of $G$ are $\{\alpha_1\beta_1,\alpha_1\beta_2,\alpha_2\beta_2\}$ and $\{\alpha_1\beta_1,\alpha_2\beta_1,\alpha_2\beta_2\}$, shown on the middle of the figure. Each maximal clique $\K$ corresponds to a triangle (simplex) of the flow polytope whose vertices are the indicator vectors of the routes of $\K$; see the framing triangulation of the flow polytope on the right of the figure.
	\begin{figure}
		\centering
		\def\svgscale{.38}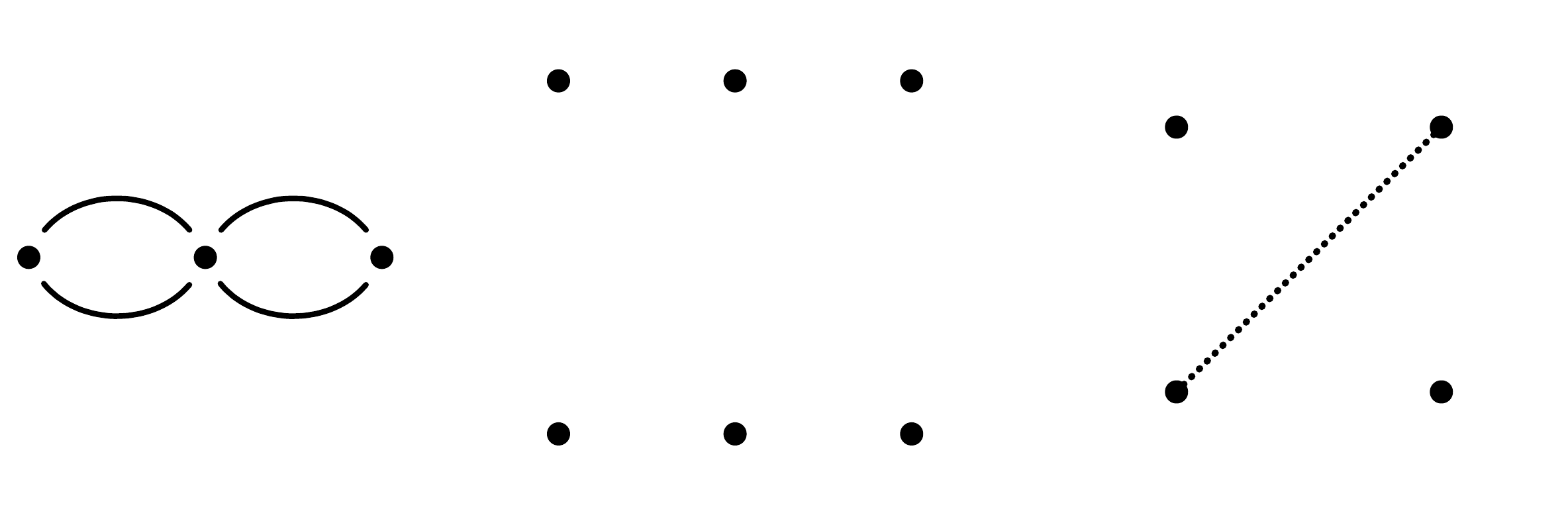
		\caption{A framed DAG, its two maximal cliques, and its framing-triangulated flow polytope.}
		\label{fig:square}
	\end{figure}
\end{example}

The simplices of the framing triangulation of the unit flow polytope of a framed DAG $(G,\R)$ with one source and one sink were given the structure of a \emph{framing lattice} in~\cite{vBC}. 
Our framing posets will agree with these framing lattices in the case of planar-framed one-source-one-sink DAGs.
On the other hand, we do not directly need these results of~\cite{vBC} to obtain our framing posets, so we will not give this construction as background.

\section{Strongly Planar and Balanced DAGs}
\label{sec:strong-balanced}

We start this section by defining strongly planar DAGs with netflows $(\G,\a)$. These give the class of flow polytopes which we will triangulate in this paper.
We will then use some reduction steps to show that it suffices to triangulate \emph{unit flow polytopes} of strongly planar \emph{balanced} DAGs.

\subsection{Strongly planar DAGs}

Mimicking~\cite[Definition 5.3]{BM} (which applies only to the one-source-one-sink case), we define strong planarity of a DAG.

\begin{defn}\label{defn:splanar}
	A planar embedding $\G$ of a nondegenerate DAG $G$ is \emph{strongly planar} if
	\begin{enumerate}
		\item for every directed edge $(i,j)$ of $\G$, the x-coordinate of $i$ is strictly less than the x-coordinate of $j$,
		\item each edge $(i,j)$ is embedded into the plane as the graph of a piecewise differentiable function of $x$,
		\item every source and every sink of $\G$ is incident to the exterior face of the planar embedding of $\G$, and
		\item every source vertex of $\G$ has the same x-coordinate, and every sink vertex of $\G$ has the same x-coordinate.
	\end{enumerate}
	We say that a nondegenerate DAG $G$ is strongly planar if it admits a strongly planar embedding.
\end{defn}

Condition (4) of Definition~\ref{defn:splanar} does not affect the class of DAGs which admit strongly planar embeddings, but is included for our own convenience.
In the following, we will use the symbol $\G$ to refer to a DAG $G$ equipped with a strongly planar embedding.
When $\G$ is equipped with a netflow vector, we make one extra requirement.

\begin{defn}
	Let $\G$ be a strongly planar DAG with nondegenerate netflow vector $\a$. We say that $(\G,\a)$ is \emph{strongly planar} if every vertex $i\in V$ with a nonzero netflow $a_i\neq0$ is incident to the exterior face of the planar embedding of $\G$.
\end{defn}

Figure~\ref{fig:not_strongly_planar} shows three embeddings of DAGs $\G$ with netflow vector $\a$ (labelled in blue) such that $(\G,\a)$ fail to be strongly planar. On the left, there is an edge $v_1\to v_2$ but $v_1$ is to the right of $v_2$, violating condition (1) for strong planarity of $\G$ (note that one could change the embedding to obtain a strongly planar pair). On the middle, the edge from $v_1$ to $v_2$ is not a function of $x$ as it is not 1-1, violating condition (2) for strong planarity of $\G$. The DAG $\G$ on the right is strongly planar, but $(\G,\a)$ is not strongly planar because the vertex $v_2$ has nonzero netflow $a_{v_2}=1$ while being incident only to interior faces.

\begin{figure}
	\centering
	\def\svgscale{.38}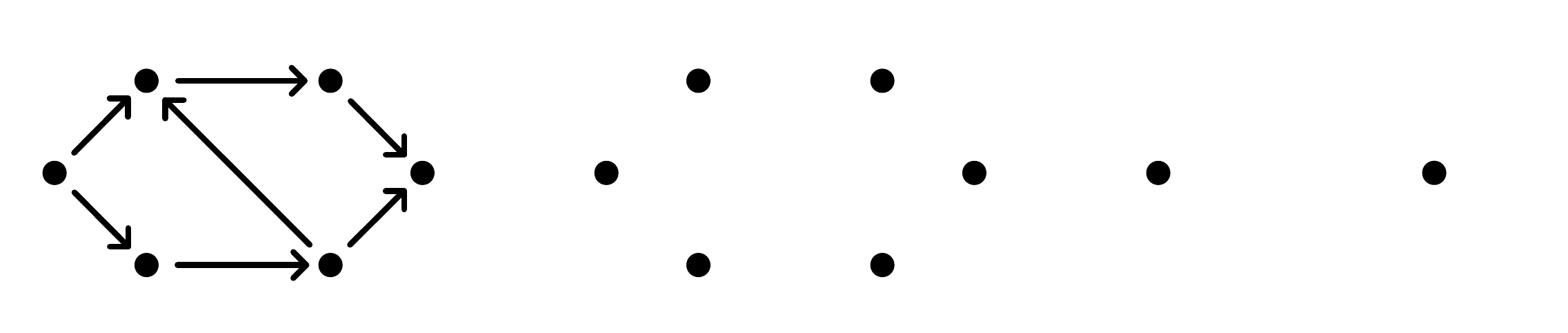
	\caption{Three examples of pairs $(\G,\a)$ which fail to be strongly planar. Netflows are labelled in blue; vertices with no integer label have netflow 0.}
	\label{fig:not_strongly_planar}
\end{figure}

Note that strong planarity uses the information of $\a$ as well as $\G$. For example, the right of Figure~\ref{fig:not_strongly_planar} is not strongly planar but one may edit the netflow vector $\a$ so that $\a_{v_1}=2$ and $\a_{v_2}=0$ while using the same embedding to make $(\G,\a)$ strongly planar and nondegenerate.

\subsection{Planar framed DAGs with one source and one sink}
\label{ssec:planar-framing}

A special case of Theorem~\ref{thm:DKK} which is relevant to this paper is when $G$ is strongly planar in addition to having one source and one sink.
In this case, a choice of strongly planar embedding $\G$ of $G$ induces a \emph{planar framing} $\R_\P$ of $\G$ defined such that $\alpha\prec_{\R_\P,\inn(v)}\beta$ if and only if $\alpha$ is drawn below $\beta$ in the strongly planar embedding, and $\gamma\prec_{\R_\P,\out(v)}\delta$ if and only if $\gamma$ is drawn below $\delta$ in the strongly planar embedding. For example, the framing labelled in Figure~\ref{fig:square} is the planar framing.

When $\G$ is a strongly planar DAG with one source and one sink, we consider it to be equipped with the planar framing $\R_\P$ by default. For example, two routes of $\G$ are \emph{compatible} if they are compatible as routes of $(G,\R_\P)$, and a \emph{clique} of $\G$ is a clique of the planar framing $(G,\R_\P)$.

\begin{remk}\label{remk:compat_planar}
	Routes of a planar-framed DAG $\G$ are compatible if and only if they may be drawn on the graph without crossing each other transversally. For example, in the DAG of Figure~\ref{fig:square}, the only two routes which cross are $\alpha_1\beta_2$ and $\alpha_2\beta_1$, meaning that these are the only two routes which are incompatible with each other. Cliques of $\G$ are then collections of routes with the condition that no two cross each other.
\end{remk}

\subsection{Balanced DAGs}

We now define balanced DAGs.

\begin{defn}\label{defn:balanced}
	Let $G$ be a DAG. Recall that we assume that every vertex of $G$ is incident to at least one edge, hence no vertex is both a source and a sink. Let $\a_1$ be the \emph{unit netflow vector} such that 
	\[(\a_1)_i=\begin{cases}1&i\text{ is a source}\\-1&i\text{ is a sink}\\0&\text{else.}\end{cases}\]
	We say that $G$ is \emph{balanced} if $\a_1$ is nondegenerate.
	When $G$ is a balanced DAG, a \emph{unit flow} on $G$ is an $\a_1$-flow. For any nonnegative constant $c$, a \emph{nonnegative flow on $G$ of strength $c$} is a $(c\a_1)$-flow.
	If $G$ is a balanced DAG, then we say its \emph{unit flow polytope} $\F_G(1):=\F_G(\a_1)$ is the polytope of unit flows of $G$.
\end{defn}

\begin{lemma}\label{lem:sources-sinks-num}
	If $G$ is a balanced DAG, then $G$ has the same number of sources as sinks.
\end{lemma}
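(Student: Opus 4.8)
The plan is to deduce this from Lemma~\ref{lem:netflow_sum_zero} applied to the unit netflow vector $\a_1$. Since $G$ is balanced, $\a_1$ is by definition nondegenerate, so Lemma~\ref{lem:netflow_sum_zero} gives $\sum_{i\in[n]}(\a_1)_i=0$. First I would recall that every vertex of $G$ is incident to at least one edge, so no vertex is simultaneously a source and a sink; hence the vertex set partitions into sources, sinks, and internal vertices. By the definition of $\a_1$, each source contributes $+1$ to the sum, each sink contributes $-1$, and each internal vertex contributes $0$. Therefore $\sum_i(\a_1)_i = (\#\text{sources}) - (\#\text{sinks})$, and setting this equal to $0$ yields that $G$ has the same number of sources as sinks.

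The only point that requires a word of care is making sure the trichotomy of vertices is exhaustive and that "source" and "sink" are used consistently with the flow-polytope conventions: a source is a vertex with no incoming edges and a sink is a vertex with no outgoing edges, and an internal vertex has both at least one incoming and at least one outgoing edge. The assumption that every vertex is incident to some edge rules out isolated vertices, and the assumption (noted in Definition~\ref{defn:balanced}) that no vertex is both a source and a sink is exactly what makes the three classes disjoint. I would write this trichotomy out explicitly so the grouping of terms in $\sum_i(\a_1)_i$ is unambiguous.

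I do not anticipate any real obstacle here; the statement is essentially a bookkeeping consequence of Lemma~\ref{lem:netflow_sum_zero}. The proof is short:
\begin{proof}
	Since $G$ is balanced, the unit netflow vector $\a_1$ is nondegenerate, so Lemma~\ref{lem:netflow_sum_zero} gives $\sum_{i\in[n]}(\a_1)_i=0$. Every vertex of $G$ is incident to at least one edge, and no vertex is both a source and a sink, so $[n]$ is the disjoint union of the set of sources, the set of sinks, and the set of internal vertices. By the definition of $\a_1$, sources contribute $+1$ each to the sum, sinks contribute $-1$ each, and internal vertices contribute $0$. Hence
	\[
		0=\sum_{i\in[n]}(\a_1)_i=(\text{number of sources of }G)-(\text{number of sinks of }G),
	\]
	so $G$ has the same number of sources as sinks.
\end{proof}
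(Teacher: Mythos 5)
Your proposal is correct and is exactly the argument the paper intends: the paper's own proof is the one-line ``Follows from Lemma~\ref{lem:netflow_sum_zero},'' and you have simply written out the bookkeeping (nondegeneracy of $\a_1$, the source/sink/internal trichotomy, and the resulting telescoping of the sum) that the paper leaves implicit.
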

\begin{proof}
	Follows from Lemma~\ref{lem:netflow_sum_zero}.
\end{proof}
If $G$ has one source and one sink, then its unit flow polytope in the above definition agrees with its unit flow polytope as defined in 
Section~\ref{ssec:backonesource}.

\begin{remk}
	This paper deals heavily with DAGs which are strongly planar and balanced. Note that when $G$ is a balanced DAG with nondegenerate netflow vector $\a$ and $\G$ is a planar embedding of $G$, then $\G$ is strongly planar if and only if $(\G,\a)$ is strongly planar because condition (3) of strong planarity of DAGs ensures the condition of strong planarity of the pair $(\G,\a)$. As such, in the following we will use the phrase ``strongly planar balanced DAG'' without fear of ambiguity.
\end{remk}

\subsection{Reducing to the strongly planar balanced case}

The goal of this paper is to give unimodular triangulations for the flow polytopes of strongly planar pairs $(\G,\a)$.
We now show that it will suffice to give unimodular triangulations for flow polytopes of strongly planar balanced DAGs.
We do this by taking a strongly planar pair $(\G,\a)$ and decontracting edges until we reach a strongly planar balanced DAG whose unit flow polytope is integrally equivalent to $\F_\G(\a)$.

\begin{defn}\label{defn:decont}
	Let $\G$ be a planar-embedded DAG with nondegenerate netflow vector $\a$ such that $(\G,\a)$ is strongly planar. We now define a DAG $G'=(V',E')$ with a planar embedding $\G'$ by making the following additions to $(V,E)$:
	\begin{enumerate}
		\item The vertex set $V'$ consists of the vertex set $V$ of $\G$ along with, for every vertex $i\in V$ with nonzero netflow $a_i$, the \emph{decontracted vertices} $\{v_1^i,\dots,v_{|a_i|}^i\}$. In other words,
	\[V'=V\cup\left(\bigcup_{i\in V\ :\ a_i\neq0}\{v_1^i,\dots,v_{|a_i|}^i\}\right).\]
\item The edge set $E'$ consists of the edge set $E$ of $\G$ along with, for every decontracted vertex $v_j^i\in V'$, the \emph{decontracted edge} $v_j^i\to i$ (if $a_i>0$) or $i\to v_j^i$ (if $a_i<0$).
	\end{enumerate}
	Since any vertex $i\in V$ with $a_i\neq0$ is on the exterior face of $\G$, we may choose a strongly planar embedding $\G'$ of $G'$ (there are many such choices).
	See Figure~\ref{fig:reduction_to_balanced}, where a strongly planar $(\G,\a)$ is shown on top (with $\alpha$ labelled in blue) and one possible choice of $\G'$ on bottom. We equip $\G'$ with the unit netflow vector $\a'_1$.
\end{defn}

\begin{figure}
	\centering
	\def\svgscale{.38}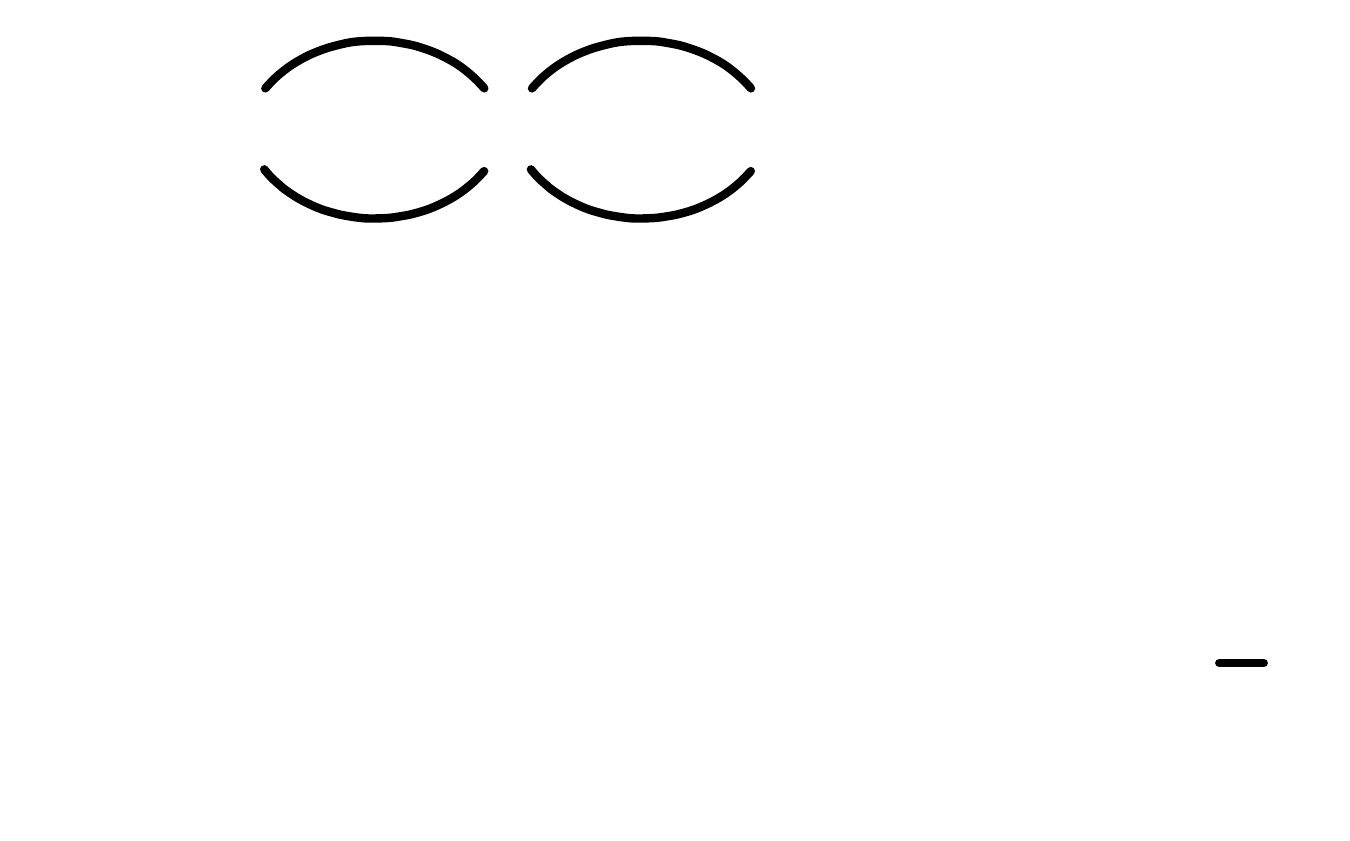
	\caption{Reducing to the strongly planar balanced case. Vertices are drawn as black letters (or symbols); on the top, netflow is labelled in blue.}
	\label{fig:reduction_to_balanced}
\end{figure}

\begin{prop}\label{prop:reduce_to_balanced}
	If $(\G,\a)$ is a strongly planar DAG, then there exists a strongly planar balanced DAG $\G'$ such that $\F_\G(\a)\ucong\F_{\G'}(1)$ is an integral equivalence.
\end{prop}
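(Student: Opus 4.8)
The plan is to show that the DAG $\G'$ already constructed above, equipped with a suitable strongly planar embedding $\Gamma'$, does the job, the unimodular equivalence being realized by the evident ``extend the flow by $1$ on the decontracted edges'' map. First I would record the combinatorial structure of $\G'$. Since $\a$ is nondegenerate, no vertex of $V$ with $a_i=0$ is a source or sink of $\G$, and a vertex $i$ with $a_i\neq 0$ is not a sink when $a_i>0$ and not a source when $a_i<0$; hence no vertex of $V$ is a source or a sink of $\G'$, so $(\a_1)_i=0$ for all $i\in V$, while the sources of $\G'$ are exactly the decontracted vertices $v^i_j$ with $a_i>0$ (netflow $+1$) and the sinks are exactly the $v^i_j$ with $a_i<0$ (netflow $-1$). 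By Lemma~\ref{lem:netflow_sum_zero} the number of sources $\sum_{a_i>0}a_i$ equals the number of sinks $\sum_{a_i<0}|a_i|$.

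Next I would check that $\G'$ is balanced and strongly planar. For balancedness, conditions (1) and (2) for nondegeneracy of $\a_1$ hold by the netflows $\pm 1$ just identified, and condition (3) follows once an $\a_1$-flow is produced, which I do below. For strong planarity, since every $i$ with $a_i\neq 0$ is incident to the exterior face of $\Gamma$ by strong planarity of $(\G,\a)$, each pendant vertex $v^i_j$ together with its edge can be drawn $x$-monotonically through the exterior region without crossings, placed to the left of $\Gamma$ when $a_i>0$ and to the right when $a_i<0$; a final stretching of the edges incident to sources and sinks puts all new sources at one common $x$-coordinate and all new sinks at another, giving a strongly planar $\Gamma'$ (this stretching is precisely why condition (4) is a harmless convention, cf.\ the remark after Definition~\ref{defn:splanar}).

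The core of the argument is the unimodular equivalence $\F_\a(\G)\cong\F_1(\G')$. Define $\phi$ on $\a$-flows of $\G$ by setting $\phi(F)|_E=F$ and $\phi(F)$ equal to $1$ on every decontracted edge, and define $\psi$ on unit flows of $\G'$ by restriction to $E$; both are affine, send lattice points to lattice points, and satisfy $\psi\circ\phi=\mathrm{id}$. To see $\phi(F)$ is an $\a_1$-flow I would check conservation vertex by vertex: $\phi(F)$ is nonnegative, at each $v^i_j$ its unique incident edge carries flow $|(\a_1)_{v^i_j}|=1$, and at each $i\in V$ the decontracted edges incident to $i$ shift the balance of $F$ at $i$ by exactly $-a_i$, turning $a_i$ into $0=(\a_1)_i$ (and for $a_i=0$ nothing changes). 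Taking $F$ to be any $\a$-flow, which exists by nondegeneracy, this also supplies the $\a_1$-flow needed above. Conversely, in any unit flow $F'$ of $\G'$ each decontracted edge is forced to carry flow $1$, since its non-$V$ endpoint is a source or sink of netflow $\pm 1$ with no other incident edge; the same vertexwise bookkeeping run backwards shows $\psi(F')=F'|_E$ is an $\a$-flow of $\G$, and forcedness of the decontracted coordinates gives $\phi\circ\psi=\mathrm{id}$ on $\F_1(\G')$. Thus $\phi$ and $\psi$ restrict to mutually inverse affine bijections between $\F_\a(\G)$ and $\F_1(\G')$ carrying lattice points to lattice points, i.e.\ a unimodular equivalence.

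The only step I expect to take any real care is the construction of $\Gamma'$: arranging all the new pendant edges to be simultaneously $x$-monotone, pairwise non-crossing, and compatible with a single common $x$-coordinate for all sources and a single one for all sinks. This is routine but slightly fussy; all of the flow-theoretic content reduces to the vertex-by-vertex conservation bookkeeping above, which is mechanical once one observes that the decontracted edges must carry unit flow.
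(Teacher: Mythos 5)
Your proposal is correct and follows essentially the same route as the paper: construct $\G'$ by decontraction, observe that the decontracted edges are forced to carry unit flow so that restriction/extension gives mutually inverse lattice-preserving affine maps, and deduce balancedness from the existence of an $\a$-flow (nondegeneracy of $\a$). The paper simply treats the forced-flow bookkeeping as ``immediate by construction,'' whereas you write out the vertex-by-vertex verification explicitly; there is no substantive difference.
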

\begin{proof}
	We show that, with the notation of Definition~\ref{defn:decont}, $\G'$ is a strongly planar balanced DAG and $\F_{\G'}(1)\ucong\F_\G(\a)$.
	Recall the unit netflow vector $\a'_1$ on $\G'$ by
	\[(\a')_i=\begin{cases}1&i\text{ is a source}\\-1&i\text{ is a sink}\\0&\text{else.}\end{cases}\]
		It is immediate that every unit flow of $(\G',\a'_1)$ labels every decontracted edge with 1. 
		Moreover, it is immediate that a vector of $\mathbb R^{E'}$ is a unit flow of $(\G',\a'_1)$ if and only if its projection to $\mathbb R^E$ is an $\a$-flow.
		It follows that projecting from $\mathbb R^{E'}$ to $\mathbb R^E$ by forgetting all coordinates of decontraction edges gives an integral equivalence from $\F_{\G'}(1)$ to $\F_\G(\a)$.
\end{proof}

Geometrically, $\F_{\G'}(1)$ is a copy of $\F_{\G}(\a)$ placed in the higher-dimensional space $\mathbb R^{E'}$ at height 1 in all new coordinates of the decontracted edges.

\begin{remk}
	We have some choice in where we embed the decontracted vertices and edges in $\G'$.
	For example, Figure~\ref{fig:reduction_to_balanced_alternate} gives an alternate embedding of $\G'$ as a strongly planar balanced DAG.
	These different choices will lead to different framing triangulations in the coming sections.
\begin{figure}
	\centering
	\def\svgscale{.38}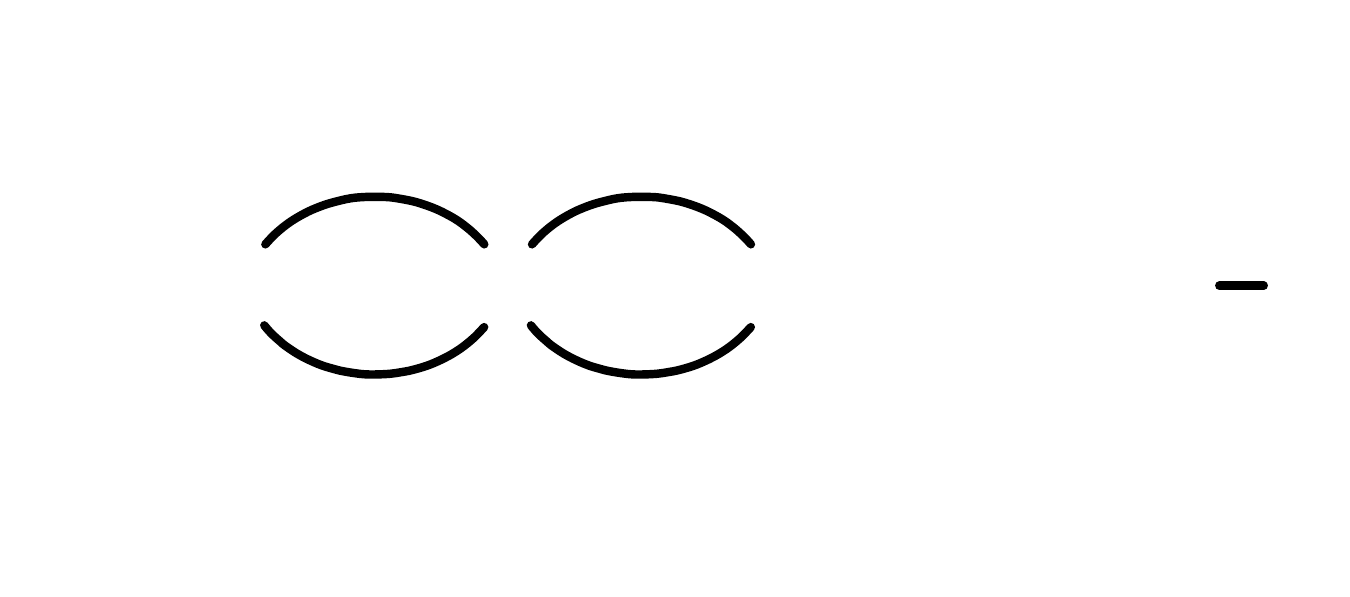
	\caption{A different way to reduce to the strongly planar balanced case.}
	\label{fig:reduction_to_balanced_alternate}
\end{figure}
\end{remk}

In light of Proposition~\ref{prop:reduce_to_balanced}, from now on we will restrict ourselves to the case where $\G$ is a strongly planar balanced DAG to obtain unimodular triangulations. Given any strongly planar $(\G,\a)$, one may use Proposition~\ref{prop:reduce_to_balanced} to obtain a strongly planar balanced DAG $\G'$ with $\F_{\G'}(1)\ucong\F_\G(\a)$ and pull the unimodular triangulation on $\F_{G'}(1)$ through the equivalence to triangulate $\F_G(\a)$.
See Example~\ref{ex:nonunit} for an example of this process.

We finish this section with a useful observation about strongly planar balanced DAGs. If $\G$ is strongly planar and balanced, then Lemma~\ref{lem:sources-sinks-num} shows that $\G$ has $m$ sources and $m$ sinks, for some positive integer $m$.
Since all source vertices have the same x-coordinate, we may notate them $s_1,\dots,s_m$ from bottom to top. Similarly, we notate the sink vertices $t_1,\dots,t_m$ bottom to top. See, for example, Figure~\ref{fig:layering}.
We keep this notation for the rest of this article.

\section{Framing Triangulations of Strongly Planar Balanced DAGs}
\label{sec:framingtriangulations}

In this section, we construct the framing triangulation of $\F_\G(1)$ given a strongly planar balanced DAG $\G$.
We will model the integer points of $\F_\G(1)$ by \emph{layerings} on $\G$ and define a notion of pairwise compatibility on layerings which induces the desired triangulation.
Our proofs will use the \emph{two-point extension} $\hat\G$ obtained from $\G$ by adding a community source and a community sink vertex to $\G$. The two-point extension will allow us to apply the triangulation results of Danilov, Karzanov, and Koshevoy~\cite{DKK} in the unit one-source-one-sink case to $\hat\G$ in order to obtain a unimodular triangulation on $\F_\G(1)$.

For this section, fix a strongly planar balanced DAG $\G$ with $m$ sources and $m$ sinks.
As indicated in the previous section, the symbol $\G$ includes not only the data of the DAG $G$ but also a choice of strongly planar embedding.
Recall from Definition~\ref{defn:balanced} that a \emph{unit flow} on $\G$ is a flow with the unit netflow vector $\a_1$, and that a \emph{nonnegative flow} on $\G$ is any nonnegative scaling of a unit flow.

\subsection{Two-point extensions}

We define the two-point extension of a balanced DAG, allowing us to relate the unit flow polytope of a balanced DAG with the unit flow polytope of a one-source-one-sink DAG.

\begin{defn}
	Let $\G$ be a strongly planar balanced DAG. Define the \emph{two-point extension} $\hat\G$ as follows:
	\begin{enumerate}
		\item The vertex set of $\hat\G$ is $\hat V:=V\cup\{\hat0\}\cup\{\hat1\}$.
		\item The edge set of $\hat\G$ is
			$\hat E:=E\cup\{(\hat0,s_i)\ :\ i\in[m]\}\cup\{(t_i,\hat1)\ :\ i\in[m]\}$.
	\end{enumerate}
	We call the edges of $\hat E\backslash E$ the \emph{extension edges} of $\hat\G$. These are precisely the edges of $\hat E$ which are incident to the source $\hat0$ or sink $\hat1$.
	Intuitively we obtain $\hat\G$ from $\G$ by adding a new source $\hat0$ and connecting it to all sources of $G$, and connecting all sinks of $G$ to a new sink $\hat1$. By construction, $\hat0$ is the unique source of $\hat\G$ and $\hat1$ is the unique sink of $\hat\G$.
	Embed the vertex $\hat0$ to the left of every vertex of $\G$ and embed $\hat1$ to the right of every vertex of $\G$, so that $\hat\G$ is strongly planar.
	See Figure~\ref{fig:two_point_extension}: a strongly planar balanced DAG $\G$ is shown on the left, and its two-point extension $\hat\G$ on the right.
\end{defn}

\begin{figure}
	\centering
	\def\svgscale{.35}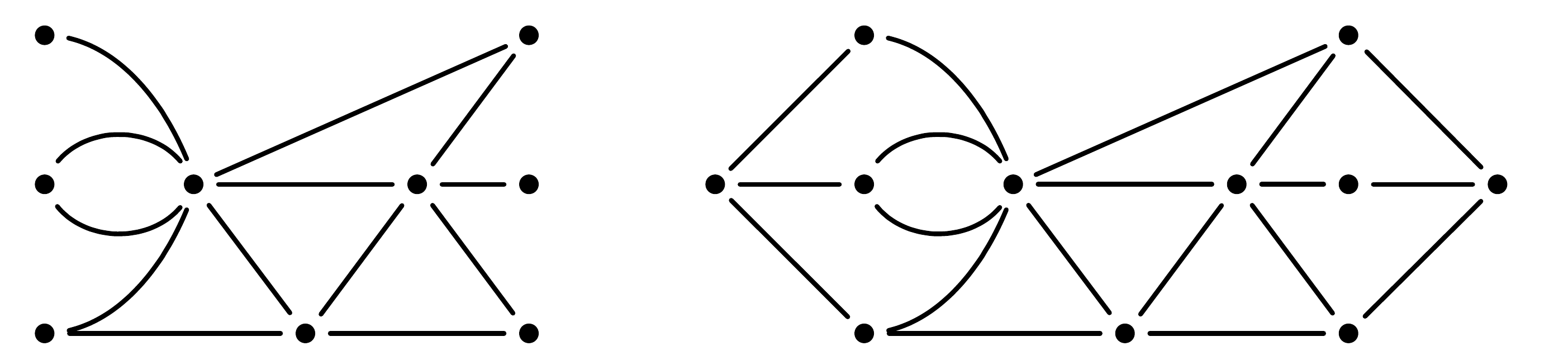
	\caption{The two-point extension of a strongly planar balanced DAG.}
	\label{fig:two_point_extension}
\end{figure}

We now connect certain flows and paths on $\G$ with certain flows and paths on $\hat\G$.

\begin{defn}
	A \emph{(nonnegative) layered flow} on $\hat\G$ is a nonnegative flow $F$ on $\hat\G$ such that if $\alpha$ and $\beta$ are extension edges of $\hat\G$, then $F(\alpha)=F(\beta)$.
\end{defn}

\begin{defn}
	Let $F$ be a nonnegative flow on $\G$ of strength $S$.
	Define a nonnegative flow $\hat F$ on $\hat\G$ by
	\[\hat F(\alpha)=\begin{cases}
		F(\alpha) & \alpha\in E\\
		S & \alpha\in\hat E\backslash E.
	\end{cases}\]
\end{defn}

\begin{lemma}\label{lem:flows_g_to_hat}
	The map $F\mapsto\hat F$ is a bijection from nonnegative flows on $\G$ to nonnegative layered flows on $\hat\G$. If $F$ has strength $S$, then $\hat F$ has strength $(m\times S)$.
\end{lemma}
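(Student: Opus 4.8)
The plan is to check directly that $\hat F$ is a genuine nonnegative layered flow on $\hat\G$ of the asserted strength, and then to write down an explicit two-sided inverse, namely restriction to the original edge set $E$.

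First I would verify the conservation equations for $\hat F$ at every vertex of $\hat V = V \cup \{\hat0,\hat1\}$. By construction $\hat F$ agrees with $F$ on $E$ and assigns the common value $S$ to each of the $m$ extension edges. At $\hat0$, whose only incident edges are the $m$ outgoing extension edges $(\hat0,s_i)$, the netflow is $mS$; dually the netflow at $\hat1$ is $-mS$. At a source $s_i$ of $\G$ --- which is an internal vertex of $\hat\G$, since all its $E$-edges point outward and it gains exactly one incoming extension edge --- the conservation sum in $\hat\G$ equals the conservation sum in $\G$ minus the extra incoming term $\hat F(\hat0,s_i) = S$; since the former equals the netflow $S$ that $F$ has at $s_i$, the netflow of $\hat F$ at $s_i$ is $0$. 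The computation at each sink $t_i$ is the mirror image, and every internal vertex of $\G$ is untouched. Hence $\hat F$ is a nonnegative flow on $\hat\G$ with netflow vector $(mS,0,\dots,0,-mS)$, i.e.\ of strength $m\times S$, and it is layered because all extension edges carry the common value $S$; this already proves the strength claim.

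Next I would produce the inverse. Given a nonnegative layered flow $H$ on $\hat\G$, let $S$ be the common value $H$ takes on the extension edges (well-defined by the definition of layered) and set $F := H|_E$. Reversing the bookkeeping above --- at each $s_i$ the equation in $\hat\G$ reads (outflow of $F$)\,$-$\,(inflow of $F$)\,$-\,S = 0$, so $F$ has netflow $S$ at $s_i$; symmetrically $-S$ at each $t_i$; and $0$ at internal vertices --- shows $F$ is a nonnegative flow on $\G$ of strength $S$. Finally I would observe that $\widehat{H|_E} = H$ (both sides agree on $E$ and assign $S$ to each extension edge, and $S$ is the strength of $H|_E$) and that $\widehat{F}|_E = F$ with recovered strength $S$; these two identities say that $F \mapsto \hat F$ and $H \mapsto H|_E$ are mutually inverse, so $F \mapsto \hat F$ is the desired bijection.

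Since the statement is pure bookkeeping, I do not expect a genuine obstacle; the only point needing a little care is tracking the sign convention (outgoing minus incoming) when comparing the conservation equation at $s_i$ (resp.\ $t_i$) in $\G$ with the one in $\hat\G$, and noticing that the strength $S$ of $F$ is precisely the number read off from any extension edge of $\hat F$ --- which is exactly what makes the two maps inverse to one another.
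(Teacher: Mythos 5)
Your proposal is correct and follows the same route as the paper: the paper also checks (more tersely, declaring most steps ``immediate'') that $\hat F$ is a nonnegative layered flow, uses restriction to $E$ as the explicit inverse to get surjectivity, and derives the strength $mS$ from the fact that $\G$ has $m$ sources. Your version simply spells out the conservation bookkeeping at $\hat0$, $\hat1$, $s_i$, and $t_i$ that the paper leaves to the reader.
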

\begin{proof}
	It is immediate that if $F$ is a nonnegative flow on $\G$ then $\hat F$ is a nonnegative flow on $\hat\G$, and that the map $F\mapsto\hat F$ is injective.
	Moreover, given any layered flow $R'$ on $\hat\G$, the restriction $R:=R'|_E$ is a nonnegative flow on $\G$ satisfying $\hat R=R'$, so the map $F\mapsto\hat F$ is a bijection. The final statement follows because there are $m$ source vertices and $m$ sink vertices of $\G$.
\end{proof}

\begin{remk}\label{remk:poly1}
	Setting $S=1$ in Lemma~\ref{lem:flows_g_to_hat} shows that the map $F\mapsto\hat F$ bijects the flows of $\F_\G(1)$ with the layered flows of $\F_{\hat\G}(m\times 1)$ (i.e., to the layered flows of the dilation of the unit flow polytope $\F_{\hat\G}(1)$ by $m$). Said another way, the unit flow polytope $\F_\G(1)$ is (up to integral equivalence) equal to the intersection of $\F_{\hat\G}(m\times1)$ with the hyperplanes setting flow through each extension edge of $\hat\G$ to 1.
	Note that this is not a face of $\F_{\hat\G}(m\times1)$ when $m>1$.
\end{remk}

We now wish to compare routes of $\hat\G$ to routes of $\G$.

\begin{defn}
	A \emph{route} $p$ of $\G$ is a directed path from a source of $\G$ to a sink of $\G$.
	The \emph{indicator vector} of $p$ is the function $\I(p):E\to\mathbb R$ defined by \[\I(p)(\alpha)=\begin{cases}1&\alpha\in p\\0&\alpha\not\in p.\end{cases}\]
\end{defn}

Let $p$ be a route of $\G$. Coming out of the sink vertex $t(p)$ is a unique extension edge $e_t$ of $\hat E$; similarly, there is a unique extension edge $e_s$ of $\hat E$ ending at $s(p)$. Define the route $\hat p:=e_spe_t$ of $\hat\G$. It is immediate that the map $p\mapsto\hat p$ is a bijection from routes of $\G$ to routes of $\hat\G$ (note that routes of $\hat\G$ correspond to vertices of $\F_{\hat\G}(1)$, but routes of $\G$ do not correspond to vertices of $\F_\G(1)$ unless $\G$ already has one source and one sink). See Figure~\ref{fig:two_point_extension_routes}.

\begin{figure}
	\centering
	\def\svgscale{.35}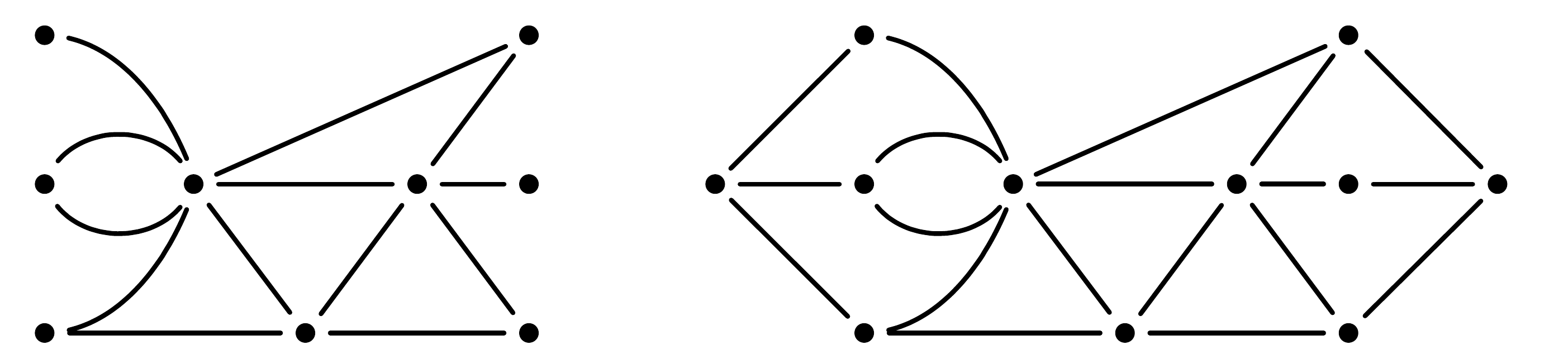
	\caption{On the left is a route $p$ of $\G$ and on the right is the corresponding route $\hat p$ of $\hat\G$.}
	\label{fig:two_point_extension_routes}
\end{figure}

We remark now that since $\hat\G$ is strongly planar and has one source vertex and one sink vertex, we may equip it with the planar framing $\R_\P$ as in Section~\ref{ssec:planar-framing}. This gives us a notion of pairwise compatibility of routes of $\hat\G$, and we may calculate the clique complex of $\hat\G$ to triangulate $\R_{\hat\G}(1)$.
We now define compatibility of routes of a strongly planar balanced DAG in a way which agrees with compatibility of the corresponding routes of its two-point extension (equipped with the planar framing).
Compare with Definition~\ref{defn:compat0}.

\begin{defn}\label{defn:compat1}
	Let $v$ be a vertex of a strongly planar balanced DAG $\G$. We define the \emph{post-$v$-order} $\prec_v^+$ on the set of paths from $v$ to a sink of $\G$.
	Let $p$ and $q$ be distinct paths from $v$ to a sink.
	Let $\sigma=\alpha_1\dots\alpha_m$ be the maximal common subpath of $p$ and $q$ beginning with $v$. Say $p$ contains $\sigma\beta$ and $q$ contains $\sigma\gamma$, where $\beta$ is below $\gamma$ in the planar embedding of $\G$.
	In this case, we say that $p\prec_v^+q$.
	This defines a total order on paths from $v$ to a sink.

	Dually, we define the \emph{pre-$v$-order} $\prec_v^-$ on the set of paths from a source to $v$.
	If $p$ and $q$ are distinct paths from a source to $v$, then let $\sigma=\alpha_1\dots\alpha_m$ be the maximal common subpath of $p$ and $q$.
	If $p$ contains $\beta\sigma$ and $q$ contains $\gamma\sigma$, where $\beta$ is below $\gamma$ in the planar embedding of $\G$, then $p\prec_v^-q$.
\end{defn}

If $p$ and $q$ are routes of $\G$ which both contain the internal vertex $v$, then we say that $p\prec_v^+q$ if $p_v^+\prec_v^+q_v^+$, where $p_v^+$ (resp. $q_v^+$) is the subpath of $p$ (resp. $q$) from $v$ to a sink. If $p$ and $q$ agree after the vertex $v$, then $p=_v^+q$ (even if $p$ and $q$ differ before the vertex $v$). We may similarly write $p\prec_v^-q$ or $p=_v^-q$.

\begin{defn}
	Let $p$ and $q$ be routes of $\G$ and let $v$ be a vertex contained in both $p$ and $q$. The routes $p$ and $q$ are \emph{incompatible} at $v$ if, without loss of generality, $p\prec_v^-q$ and $q\prec_v^+p$. The routes $p$ and $q$ are \emph{incompatible} if they are incompatible at any shared vertex, otherwise they are \emph{compatible}.
	A \emph{clique} is a set of pairwise compatible routes of $\G$.
\end{defn}

\begin{remk}
	The notions of $\prec_v^+$ and compatibility agree with Definitions~\ref{defn:compat0} and~\ref{defn:compat0.5} applied to the two-point extension $\hat\G$. In particular, if $p$ and $q$ are routes of a strongly planar balanced DAG $\G$ and $v$ is a shared vertex, then we have
	\begin{enumerate}
		\item $p\prec_v^+q$ if and only if $\hat p\prec_v^+\hat q$,
		\item $p\prec_v^-q$ if and only if $\hat p\prec_v^-\hat q$, and hence
		\item $p$ is compatible with $q$ if and only if $\hat p$ is compatible with $\hat q$.
	\end{enumerate}
\end{remk}

Two routes of a strongly planar balanced DAG are compatible if and only if they do not cross each other transversally.
If $p$ and $q$ are routes of $\G$, then we say that $p$ is \emph{weakly above} $q$ (in the planar embedding of $\G$) if for any x-coordinate at which $p$ and $q$ are defined, the y-coordinate of $p$ is greater than or equal to the y-coordinate of $q$. We also say that $q$ is \emph{weakly below} $p$ in this case. If $p$ and $q$ are any routes of $\G$ then the condition that $p$ and $q$ are compatible amounts to the condition that $p$ is either weakly above or weakly below $q$. Hence, if $\K$ is a clique of $\G$, the routes of $\K$ are totally ordered from bottom to top by the planar embedding of $\G$.

\subsection{Layerings}

We now define layerings on the strongly planar balanced DAG $\G$ and show that they are in bijection with integer unit flows on $\G$. We define the notion of compatibility between layerings which will induce a unimodular triangulation on $\F_\G(1)$.

\begin{defn}\label{defn:horizontal}
	A route of $\G$ is \emph{horizontal} if it starts at the source $s_i$ and ends at the sink $t_i$ for some $i\in[m]$.
	We say that it is a \emph{horizontal $i$-route}.
	A route $\hat p$ of $\hat\G$ is horizontal if its corresponding route $p$ of $\G$ is horizontal.
\end{defn}

\begin{defn}\label{defn:layering}
	A \emph{layering} of $\G$ is a set $\p=\{p_1,\dots,p_m\}$ of pairwise compatible routes of $\G$ such that $p_i$ is a horizontal $i$-route for all $i\in[m]$.
\end{defn}

The right of Figure~\ref{fig:layering} shows a layering of $\G$.

\begin{figure}
	\centering
	\def\svgscale{.38}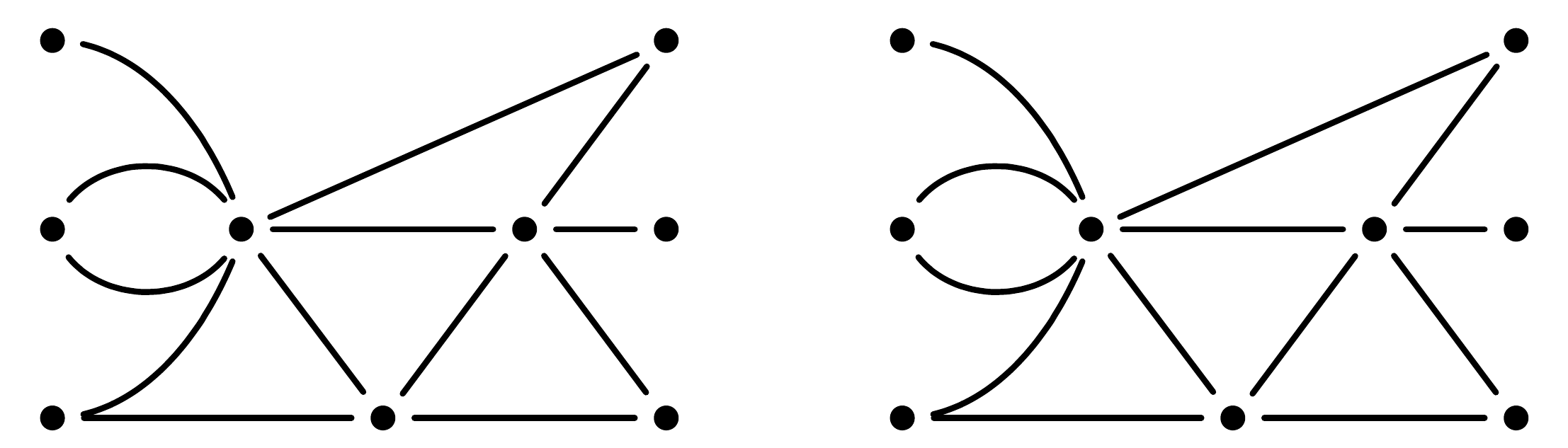
	\caption{A strongly planar balanced DAG with a layering in magenta (right) and the corresponding unit flow in blue (left).}
	\label{fig:layering}
\end{figure}

\begin{remk}
	The condition that the routes of a layering are pairwise compatible amounts to the condition that the routes $\{\hat p_1,\dots,\hat p_m\}$ form a clique of $\hat\G$. This is the same as the condition that none of the routes $\{p_1,\dots,p_m\}$ cross each other transversally.
\end{remk}

We now prepare to connect layerings to integer unit flows.

\begin{prop}\label{lem:layered-horizontal}
	Let $\hat F$ be a layered flow of $\hat\G$ with rational coordinates. Then the clique combination realizing $\hat F$ uses only horizontal routes.
\end{prop}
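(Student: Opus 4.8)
The plan is to apply Theorem~\ref{thm:DKK} to the two-point extension $\hat\G$, together with the fact that compatible routes of a planar-framed DAG are exactly the non-crossing ones, in order to show that the support clique of $\hat F$ contains no non-horizontal route.

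First I would record the set-up. Since $\hat\G$ is strongly planar with a unique source $\hat0$ and a unique sink $\hat1$, equipped with the planar framing, Theorem~\ref{thm:DKK} gives the unique positive clique combination $\hat F=\sum_{\hat p\in\K}a_p\I(\hat p)$ in which $\K$ is a clique of $\hat\G$ and every $a_p>0$. Let $c$ be the common value of $\hat F$ on the extension edges; if $c=0$ then $\hat F\equiv0$, $\K=\emptyset$, and the statement holds vacuously, so assume $c>0$. For each $i\in[m]$, the extension edge $(\hat0,s_i)$ is used by exactly those $\hat p\in\K$ with $s(p)=s_i$, so $\sum_{\hat p\in\K:\,s(p)=s_i}a_p=c$, and symmetrically $\sum_{\hat p\in\K:\,t(p)=t_i}a_p=c$.

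Next I would use the non-crossing structure of $\K$. As $\K$ is a clique, its routes are pairwise non-crossing, hence totally ordered by the weakly-below relation; write $\hat p^{(1)}\prec\dots\prec\hat p^{(N)}$ with coefficients $a_1,\dots,a_N>0$, and let $\sigma(k),\tau(k)\in[m]$ be defined by $s(p^{(k)})=s_{\sigma(k)}$ and $t(p^{(k)})=t_{\tau(k)}$. Evaluating the weakly-below relation at the common x-coordinate of all sources (condition (4) of Definition~\ref{defn:splanar}), at which $\hat p^{(k)}$ passes through $s_{\sigma(k)}$, shows $\sigma$ is weakly increasing in $k$; the same argument at the common x-coordinate of all sinks shows $\tau$ is weakly increasing. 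Hence $\{k:\sigma(k)\le i\}$ is a prefix $\{1,\dots,N_i\}$ of $[N]$, and summing the source-side equalities over indices $\le i$ gives $\sum_{k=1}^{N_i}a_k=ic$; likewise $\{k:\tau(k)\le i\}=\{1,\dots,N_i'\}$ with $\sum_{k=1}^{N_i'}a_k=ic$. Since all $a_k>0$, the partial sums $a_1+\dots+a_\ell$ are strictly increasing in $\ell$, so the value $ic$ is attained for at most one $\ell$; therefore $N_i=N_i'$ for every $i$, which forces $\sigma=\tau$. Thus each $\hat p^{(k)}$ runs from $s_{\sigma(k)}$ to $t_{\sigma(k)}$, i.e., is horizontal, proving the proposition.

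The only step I expect to need genuine care is the monotonicity of $\sigma$ and $\tau$ along the bottom-to-top order of $\K$; this is precisely where strong planarity enters, through the common x-coordinate of all sources and of all sinks, and it reduces to the statement that two non-crossing routes entering via $s_i$ and $s_{i'}$ with $i<i'$ must leave via $t_j$ and $t_{j'}$ with $j\le j'$. The remaining ingredient, that a sequence of positive reals has strictly increasing partial sums, is elementary; in particular the argument does not use rationality of $\hat F$, which is assumed here only because that is the generality in which the proposition is later invoked.
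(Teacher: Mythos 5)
Your proposal is correct. It shares the paper's overall strategy -- apply Theorem~\ref{thm:DKK} to $\hat\G$ with its planar framing, use the fact that the flow on each extension edge forces the same total coefficient at every source and every sink, and then exploit that a clique of a planar-framed DAG is non-crossing -- but the final combinatorial step is executed differently. The paper first rescales $\hat F$ to be integer-valued so that the clique combination can be rewritten as a sum of $mS$ indicator vectors with multiplicity, groups these into blocks of $S$ by source, and then derives a contradiction from a minimal index $i$ whose block fails to end at $t_i$ by exhibiting two transversally crossing routes in the clique. You instead keep the positive (not necessarily integer) coefficients, order the whole clique bottom-to-top, observe that the source index $\sigma$ and sink index $\tau$ are both weakly increasing along this order (this is where condition~(4) of Definition~\ref{defn:splanar} enters), and match the prefixes $\{k:\sigma(k)\le i\}$ and $\{k:\tau(k)\le i\}$ via the strictly increasing partial sums of the positive coefficients. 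This buys you a direct proof rather than one by contradiction, and it dispenses with the integrality reduction entirely, so the argument visibly works for any nonnegative layered flow; your closing remark that rationality is not actually used is accurate. The one point you flagged as needing care -- monotonicity of $\sigma$ and $\tau$ -- is indeed the crux, and your justification via the common x-coordinate of the sources (resp.\ sinks), at which each route of the clique sits exactly at its source (resp.\ sink) vertex, is sound.
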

\begin{proof}
	We first prove the statement assuming that the layered flow $\hat F$ has integer coordinates. If $\hat F$ is the zero vector then its clique combination uses no routes and the result is trivial, so we assume that $\hat F$ is not the zero vector. Let $S\in\mathbb Z_{>0}$ such that $\hat F(\alpha)=S$ for any source or sink edge $\alpha$, so that $\hat F$ is a strength-$(mS)$ flow of $\hat\G$.

	By Theorem~\ref{thm:DKK}, the flow $\hat F$ gives rise to a unique clique $\hat{\K}$ of $\hat\G$ and a decomposition $\hat F=\sum_{\hat p\in\hat{\K}}a_{\hat p}\I({\hat p})$, where each coefficient $a_{\hat p}$ is a positive integer. This is the clique combination realizing $\hat F$, and we wish to show that each route $\hat p$ of $\hat\K$ is horizontal. Since $\hat F$ has strength $mS$, we have $\sum_{{\hat p}\in\hat{\K}}a_{\hat p}=mS$. Then we may get rid of the coefficients by rewriting the clique combination as $\hat F=\sum_{i=1}^{mS}\I(\hat p_i)$, where each ${\hat p}\in\hat{\K}$ appears in the multiset $\{{\hat p}_1,\dots,{\hat p}_{mS}\}$ with multiplicity $a_{\hat p}$.

	Since $\hat F(\alpha)=S$ for each source edge $\alpha$ of $\hat\G$, for any $i\in[m]$ exactly $S$ of the routes $\{{\hat p}_j\}$ pass through the vertex $s_i$. Similarly, $S$ of the routes $\{\hat p_j\}$ pass through $t_i$. Order the routes $\{\hat p_1,\dots,\hat p_{mS}\}$ so that the routes $\{\hat p_{(i-1)S+1},\dots,\hat p_{(i-1)S+S}\}$ pass through vertex $s_i$, with $\hat p_{(i-1)S+1}\preceq_{s_i}^+\hat p_{(i-1)S+2}\preceq_{s_i}^+\dots\preceq_{s_i}^+\hat p_{(i-1)S+S}$. 
	We claim that for all $i\in[m]$, the routes $\{\hat p_{(i-1)S+1},\dots,\hat p_{(i-1)S+S}\}$ pass through vertex $t_i$, so that all routes of $\hat{\K}$ are horizontal. Suppose to the contrary and take $i\in[m]$ minimal such that one of the routes $\{\hat p_{(i-1)S+1},\dots,\hat p_{(i-1)S+S}\}$ does not pass through $t_i$ -- say $\hat P:=\hat p_{(i-1)S+l}$ for some $l\in[S]$. By minimality of $i$, for any $i'<i$ the $S$ routes $\{\hat p_{(i'-1)S+1},\dots,\hat p_{(i'-1)S+S}\}$ pass through $t_{i'}$. Since $\hat F(\alpha_{i'})=S$ for $\alpha_{i'}$ the sink edge of $\G$ incident to $t_{i'}$, this means that $\hat P$ cannot also pass through $t_{i'}$. This shows that $\hat P$ must pass through $t_{i'}$ for some $i'>i$. Symmetrically, there must exist some $i''\in[m]$ and $j\in[S]$ with $i''>i$ such that $\hat Q:=\hat p_{(i''-1)S+j}$ ends at $t_i$. Then $\hat P$ begins at $i$ and ends at $i'$, while $\hat Q$ begins at $i''>i$ and ends at $i<i'$. Then $\hat P$ and $\hat Q$ must cross each other transversally in the strongly planar embedding of $\hat\G$, hence they are incompatible, contradicting the assumption that they are part of the same clique $\hat{\K}$. This completes the proof that all routes of $\hat{\K}$ are horizontal when $\hat F$ has integer coordinates.

	Now, suppose that $\hat F$ is a layered flow with rational coordinates. Let $c$ be the GCD of all entries of $F$. Then $c\cdot\hat F$ is a layered flow with integer coordinates, so by our proof of the integer-valued case it is realized as a clique combination 
	$c\cdot\hat F=\sum_{\hat p\in\hat K}a_{\hat p}\I(\hat p)$ where $\hat K$ is a clique of horizontal routes. Then
	\[
		\hat F=\frac{1}{c}(c\cdot\hat F)=\sum_{\hat p\in\hat K}\left(\frac{a_{\hat p}}{c}\right)\I(\hat p)\]
	is the clique combination for $\hat F$ and uses only horizontal routes, completing the proof.
\end{proof}

\begin{lemma}\label{prop:layerings_and_flows}
	The map $\J:\p\mapsto\sum_{i\in[m]}\I(p_i)$ is a bijection from layerings of $\G$ to integer unit flows on $\G$.
\end{lemma}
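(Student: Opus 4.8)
The plan is to transport the statement to the two-point extension $\hat\G$ and read it off from Theorem~\ref{thm:DKK} together with Proposition~\ref{lem:layered-horizontal}. Recall the correspondences already in place: a layering $\p=\{p_1,\dots,p_m\}$ corresponds to the set $\{\hat p_1,\dots,\hat p_m\}$ of associated routes of $\hat\G$, which is a clique of $\hat\G$ (the $\hat p_i$ are pairwise compatible because the $p_i$ are) consisting of $m$ horizontal routes; integer unit flows $F$ on $\G$ correspond bijectively to integer layered flows $\hat F$ on $\hat\G$ by Lemma~\ref{lem:flows_g_to_hat}, and such an $\hat F$ has strength exactly $m$; and by Theorem~\ref{thm:DKK} every integer flow of $\hat\G$ has a unique positive clique combination, with integer coefficients.

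First I would verify that $\J$ is well defined. Since each $p_i$ starts at the distinct source $s_i$, the routes $p_1,\dots,p_m$ are distinct and pairwise compatible, so $\sum_{i=1}^m\I(\hat p_i)$ is a positive (unit-coefficient) clique combination of $\hat\G$; it realizes a flow $\hat F$ assigning value $1$ to every extension edge, because the extension edge at $s_i$ lies on $\hat p_j$ exactly when $j=i$, and likewise for the extension edge at $t_i$. Hence $\hat F$ is a layered flow of strength $m$, and under Lemma~\ref{lem:flows_g_to_hat} it is identified with $\widehat{(\hat F|_E)}$, where $\hat F|_E=\sum_{i=1}^m\I(p_i)=\J(\p)$; this flow is a nonnegative integer flow with netflow vector $\a_1$ (each $p_i$ contributes $+1$ at $s_i$, $-1$ at $t_i$, and $0$ elsewhere), i.e.\ an integer unit flow on $\G$.

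For injectivity, suppose $\J(\p)=\J(\q)$. Passing to the two-point extension gives $\sum_i\I(\hat p_i)=\sum_i\I(\hat q_i)$ as flows on $\hat\G$; both sides are positive clique combinations with all coefficients equal to $1$, so by the uniqueness in Theorem~\ref{thm:DKK} the underlying cliques agree as sets, and since $\hat p_i$ (resp.\ $\hat q_i$) is the unique route of the clique through $s_i$ we get $\hat p_i=\hat q_i$, hence $p_i=q_i$ and $\p=\q$. For surjectivity, let $F$ be an integer unit flow on $\G$; then $\hat F$ is an integer layered flow of $\hat\G$ of strength $m$, so by Theorem~\ref{thm:DKK} we may write $\hat F=\sum_{\hat p\in\hat\K}a_{\hat p}\I(\hat p)$ with positive integer coefficients, and by Proposition~\ref{lem:layered-horizontal} every route of $\hat\K$ is horizontal. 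For each $i$, the value of $\hat F$ on the extension edge at $s_i$ equals $\sum_{\hat p\in\hat\K,\ s_i\in\hat p}a_{\hat p}$, and this value is $1$; since the $a_{\hat p}$ are positive integers, exactly one route $\hat p_i\in\hat\K$ runs through $s_i$, with $a_{\hat p_i}=1$. Being horizontal, $\hat p_i$ is a horizontal $i$-route, so $\hat\K=\{\hat p_1,\dots,\hat p_m\}$ with all coefficients $1$; as $\hat\K$ is a clique the $p_i$ are pairwise compatible, whence $\p:=\{p_1,\dots,p_m\}$ is a layering with $\J(\p)=\hat F|_E=F$.

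I do not expect a serious obstacle: essentially all of the difficulty is absorbed into Theorem~\ref{thm:DKK} and Proposition~\ref{lem:layered-horizontal}. The one point requiring care is the counting step that forces every clique coefficient of a strength-$m$ layered flow to be $1$ and each $s_i$ (equivalently each $t_i$) to lie on exactly one route of the clique; this is precisely what upgrades the statement ``integer flows of $\hat\G$ biject with integer clique combinations'' to the sharper claim that \emph{layerings}, rather than arbitrary strength-$m$ integer flows, biject with integer unit flows on $\G$.
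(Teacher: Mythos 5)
Your proof is correct and follows essentially the same route as the paper's: pass to the two-point extension, use the uniqueness of positive clique combinations in Theorem~\ref{thm:DKK} for injectivity, and combine Theorem~\ref{thm:DKK} with Proposition~\ref{lem:layered-horizontal} and the strength-$m$ counting on extension edges for surjectivity. The only difference is that you spell out a few steps the paper calls immediate (well-definedness, and that agreement of the cliques forces $\hat p_i=\hat q_i$ via the distinct sources), which is fine.
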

We call $\J(\p)$ the \emph{indicator vector} of the layering $\p$. Note that this is a different symbol than $\I$, used for the indicator vector of a route.
Figure~\ref{fig:layering} shows a layering $\p$ on the right and labels the corresponding unit flow $\J(\p)$ on the left.
\begin{proof}
	It is immediate that if $\p$ is a layering of $\G$ then $\J(\p)$ is an integer unit flow on $\G$, so $\J$ is well-defined as a map from layerings of $\G$ to integer unit flows on $\G$.

	We now argue injectivity of $\J$. Suppose that $\p$ and $\q$ are two layerings such that $\J(\p)=\J(\q)$. Since $\p$ is a layering, the routes $\{p_1,\dots,p_m\}$ of $\G$ are compatible, hence the routes $\{\hat p_1,\dots,\hat p_m\}$ of $\hat\G$ are compatible; then $\sum_{i=1}^m\I(\hat p_i)$ is a clique combination for ${\J(\p)}$. Similarly, $\sum_{i=1}^m\I(\hat q_i)$ is a clique combination for ${\J(\q)}={\J(\p)}$. Then the uniqueness part of Theorem~\ref{thm:DKK} implies that $\{\hat p_1,\dots,\hat p_m\}=\{\hat q_1,\dots,\hat q_m\}$ and hence $\p=\q$. We have shown that $\J$ is injective.

	We now show that $\J$ is surjective. Let $F$ be a unit flow on $\G$ with integer coordinates.
	By Lemma~\ref{lem:flows_g_to_hat}, the function $\hat F$ is a layered flow on $\hat\G$ of strength $m$, so that if $\alpha$ is a source or sink edge of $\hat\G$ then $\hat F(\alpha)=1$.
	Then by Theorem~\ref{thm:DKK}, one obtains $\hat F=\sum_{i=1}^{m'}a_i\I(\hat p_i)$ for some clique $\{\hat p_1,\dots,\hat p_{m'}\}$ of $\hat\G$ and some coefficients $a_i>0$.
	Since $F$ is integer-valued, the flow $\hat F$ is integer-valued, hence all coefficients $a_i$ are integers.
	By Proposition~\ref{lem:layered-horizontal}, each route $\hat p_i$ is horizontal. Since $\hat F$ is a layered flow of strength $m$, we have $\hat F(\alpha)=1$ for all source or sink edges $\alpha$, showing that all coefficients $a_i$ must equal 1 and $m'=m$.
	It follows that $\p:=\{p_1,\dots,p_m\}$ is a layering (where $p_i$ is the route of $\G$ corresponding to the route $\hat p_i$ of $\hat\G$) and it is now immediate that $\J(\p)=F$, proving surjectivity of $\J$.
\end{proof}

\subsection{The layering-clique complex}

We now define the compatibility relation on layerings which will induce our framing triangulations.

\begin{defn}\label{defn:route-compatible}
If $\K$ is a subset of the layerings of $\G$ then define
$\routes(\K):=\cup_{\p\in\K}\left(\cup_{p\in\p}p\right)$ to be the set of routes used in layerings of $\K$.
	We say that $\K$ is \emph{route-compatible} if $\routes(\K)$ is a clique of $\G$. Similarly, two layerings $\p$ and $\q$ are \emph{route-compatible} if $\{\p,\q\}$ is route-compatible.
\end{defn}

\begin{defn}\label{defn:layering-compatible}
	Let $\p$ and $\q$ be layerings of $\G$.
	We say that $\p\prec\q$ if $\p$ and $\q$ are route-compatible and $p_i\preceq_{s_i}^+ q_i$ for all $i\in[m]$.
	We say that $\p$ and $\q$ are \emph{compatible} if $\p\preceq\q$ or $\q\preceq\p$; otherwise, they are \emph{incompatible}.
	A \emph{layering-clique} ${\K}$ is a set of pairwise compatible layerings.
	The \emph{layering-clique complex} is the simplicial complex of layering-cliques.
\end{defn}

Note that the layering-clique complex is indeed an abstract simplicial complex because a subset of a layering-clique is a layering-clique.

Note that under the condition that $\p$ and $\q$ is route-compatible, no two routes appearing in $\routes(\{\p,\q\})$ may cross each other. Then the condition that $p_i\prec_{s_i}^+q_i$ is the same as the condition that $p_i$ lies weakly below $q_i$ in the planar embedding of $\G$, which is the same as the condition that $p_i\prec_{t_i}^-q_i$. So, the apparent asymmetry of using the orders $\prec_{s_i}^+$ instead of $\prec_{t_i}^-$ is unimpactful because of the assumption of route-compatibility.

The relation $\prec$ is not transitive on general layerings of $\G$. See Figure~\ref{fig:transitive}; if $(\p,\q,\r)$ are the blue, magenta, and green layerings, respectively, then $\p\prec\q$ and $\q\prec\r$ but $\p\not\prec\r$.

\begin{figure}
	\centering
	\def\svgscale{.38}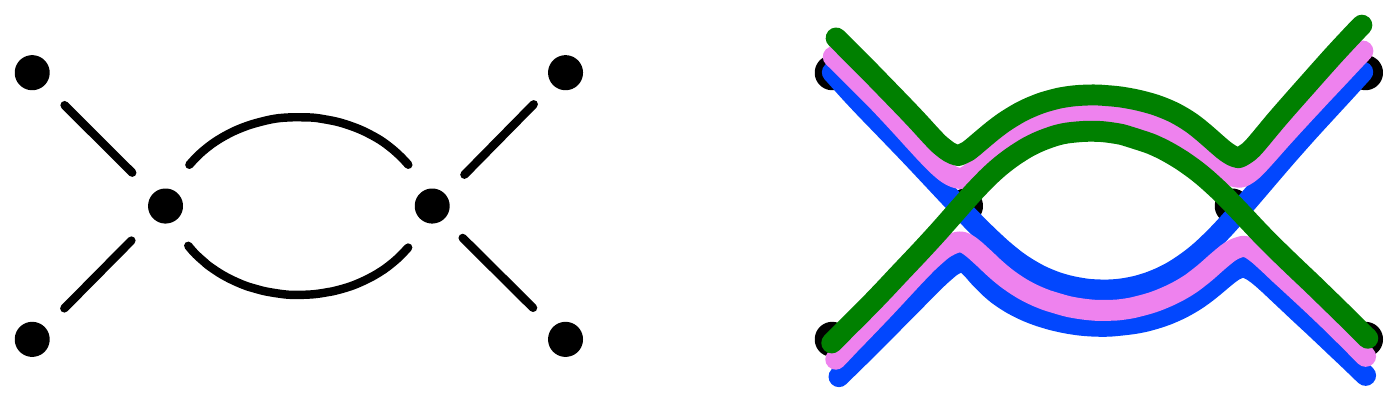
	\caption{The relation $\prec$ is not transitive on layerings.}
	\label{fig:transitive}
\end{figure}

On the other hand, if $\K=\{\p^1,\dots,\p^{\l}\}$ is any route-compatible set of layerings then it is immediate that $\prec$ restricts to a partial order on $\K$.
These layerings then form a layering-clique if and only if $\prec$ is a total order on this set.
In this case, we write $\prec_\K$ for the restriction of $\prec$ to the layering-clique $\K$.
We commonly notate elements of a layering-clique as ${\K}=\{\p^1,\p^2,\dots,\p^l\}$, where we always assume that $\p^1\prec\p^2\prec\dots\prec\p^l$.

We now connect layering-cliques of $\G$ with simplices of $\F_\G(1)$.

\begin{defn}
	If $\K$ is a layering-clique of a strongly planar balanced DAG $\G$, then a \emph{($\K$-)layering-clique combination} of $\G$ is a linear combination
	\[\sum_{\p\in\K}a_\p\J(\p),\]
	where each $a_p\geq0$. It is \emph{positive} if each $a_p>0$, and \emph{unit} if $\sum_{\p\in\K}a_\p=1$.
	Note that the layering-clique combination is unit if and only if the resulting sum is a unit flow.
	The set of unit flows arising as (necessarily unit) $\K$-clique combinations is the \emph{layering-clique simplex} $\Delta_1(\K)$.
\end{defn}

We finally phrase an analog of Theorem~\ref{thm:DKK} in the setting of strongly planar balanced DAGs.

\begin{thm}\label{thm:triangulation}
	Let $F$ be a nonnegative flow of a strongly planar balanced DAG $\G$. Then there exists a unique layering-clique combination
	$F=\sum_{\p\in{\K}}a_\p\J(\p)$ for $F$. Moreover, if $F$ is integer-valued, then all coefficients $a_\p$ are integers.
\end{thm}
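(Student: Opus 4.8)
The plan is to reduce Theorem~\ref{thm:triangulation} to Theorem~\ref{thm:DKK} applied to the two-point extension $\hat\G$ with its planar framing, using the bijections between objects on $\G$ and layered objects on $\hat\G$ already established in this section. The key translation dictionary is: nonnegative flows on $\G$ of strength $S$ correspond to nonnegative layered flows on $\hat\G$ of strength $mS$ (Lemma~\ref{lem:flows_g_to_hat}); routes of $\G$ correspond to routes of $\hat\G$ via $p\mapsto\hat p$; compatibility is preserved by this correspondence (the remark following Definition~\ref{defn:compat1}); and Proposition~\ref{lem:layered-horizontal} tells us that the DKK clique combination of a layered flow uses only horizontal routes. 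So given $F$, a nonnegative flow on $\G$, I would form $\hat F$, apply Theorem~\ref{thm:DKK} to get the unique positive clique combination $\hat F=\sum_{\hat p\in\hat\K}a_{\hat p}\I(\hat p)$, observe via Proposition~\ref{lem:layered-horizontal} that every $\hat p\in\hat\K$ is horizontal, and then organize these routes into layerings.

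\medskip

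\noindent\textbf{Existence.} Having the DKK decomposition $\hat F=\sum_{\hat p\in\hat\K}a_{\hat p}\I(\hat p)$ with all routes horizontal, I would argue that the multiset of routes $\{p:\hat p\in\hat\K, \text{with multiplicity }a_{\hat p}\}$ can be partitioned into layerings. The routes of $\hat\K$ form a clique, hence are pairwise compatible and totally ordered from bottom to top by the planar embedding (as noted after the remark following Definition~\ref{defn:compat1}); in particular no two cross transversally. Since $\hat F$ is layered, exactly (strength-of-$F$)-many routes of the multiset pass through each $s_i$, and exactly that many through each $t_i$; the argument of Proposition~\ref{lem:layered-horizontal} already shows that the $k$-th route through $s_i$ from the bottom is horizontal, i.e.\ also passes through $t_i$. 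So I can build layering $\p^j$ by selecting, for each $i$, the $j$-th-from-bottom horizontal $i$-route in the multiset. Each $\p^j$ is a layering because its routes come from the clique $\hat\K$ (so are pairwise compatible). To see the $\p^j$ are pairwise compatible as layerings, I note they are all route-compatible (all routes lie in the single clique $\routes(\{\p^1,\dots\})\subseteq$ the clique of $\G$ underlying $\hat\K$), and the selection rule makes $p^j_i\preceq^+_{s_i}p^{j'}_i$ whenever $j\le j'$ — precisely the condition $\p^j\preceq\p^{j'}$. Setting $a_{\p^j}$ appropriately (with repeated layerings collapsed and their multiplicities summed) yields the desired layering-clique combination; if $F$ is integer-valued then $\hat F$ is, so the $a_{\hat p}$ are integers by Theorem~\ref{thm:DKK}, whence the $a_{\p^j}$ are integers.

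\medskip

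\noindent\textbf{Uniqueness.} Suppose $F=\sum_{\p\in\K}a_\p\J(\p)$ is any positive layering-clique combination. Expanding $\J(\p)=\sum_{i}\I(p_i)$ and passing to $\hat\G$ (using that the routes $\hat p_i$ over all $\p\in\K$ form a clique of $\hat\G$, since $\K$ is a layering-clique and route-compatibility lifts to compatibility in $\hat\G$), I get a positive clique combination of $\hat F$ in $\hat\G$: concretely $\hat F=\sum_{\p\in\K}a_\p\sum_i\I(\hat p_i)$, and after collecting coefficients of equal routes this is a positive clique combination for $\hat F$. By the uniqueness clause of Theorem~\ref{thm:DKK}, this must equal the DKK decomposition $\sum_{\hat p\in\hat\K}a_{\hat p}\I(\hat p)$. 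This pins down the multiset of routes (with multiplicities) appearing across all layerings of $\K$. It remains to recover the layering-clique $\K$ and the coefficients $a_\p$ from this route-multiset, and here is where I expect the main obstacle: I must show that a route-compatible family of layerings whose routes (with multiplicity) form a fixed clique-multiset, and on which $\prec$ is a total order, is uniquely determined by that multiset. The argument is that a chain in the order $\prec$ on layerings is forced coordinatewise — for each fixed $i\in[m]$, the horizontal $i$-routes appearing in $\K$ (with multiplicity) are totally ordered by $\preceq^+_{s_i}$, and the layerings, being a $\prec$-chain, must use them in that sorted order, with the $j$-th layering getting the $j$-th $i$-route; so $\K$ is exactly the reconstruction performed in the existence step. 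This rigidity — that the ``sorting by $\prec$'' reconstruction is the only way to package the route-multiset into a layering-clique — is the crux, and it follows because $\prec$ restricted to a layering-clique is a total order (stated in the paragraph before the definition of layering-clique combination), which forces the coordinatewise monotonicity.
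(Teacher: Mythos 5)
Your proposal is correct, and the existence half is essentially identical to the paper's: lift $F$ to the layered flow $\hat F$ on $\hat\G$, apply Theorem~\ref{thm:DKK}, invoke Proposition~\ref{lem:layered-horizontal} to see all routes are horizontal, and assemble the $j$-th-from-bottom route through each source into the layering $\p^j$. Where you genuinely diverge is uniqueness. The paper also passes both putative layering-clique combinations through $F\mapsto\hat F$ and uses DKK uniqueness to equate, for every route $r$, the total weights $\sum_{i:r\in\p^i}a_{\p^i}=\sum_{i:r\in\q^i}b_{\q^i}$; but it then runs a minimal-index contradiction, taking the smallest $j$ with $(a_{\p^j},\p^j)\neq(b_{\q^j},\q^j)$ and splitting into two cases to violate that weight equality. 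You instead give a direct reconstruction: since $\p\prec\q$ forces $p_i\preceq_{s_i}^+q_i$ coordinatewise, each coordinate sequence of a $\prec$-chain is weakly increasing, hence is the unique sorted arrangement of the multiset of horizontal $i$-routes determined by the DKK weights, so the chain and its coefficients are forced. This is arguably cleaner and makes transparent \emph{why} the decomposition is rigid, whereas the paper's argument localizes the failure to a single route and is perhaps easier to verify line by line. One small caveat for your version: ``the $j$-th layering gets the $j$-th $i$-route'' presupposes integer multiplicities, so you should either first reduce to integer-valued $F$ (as the paper does, via rationality of the polytope and clearing denominators) or phrase the sorting argument in terms of cumulative weights partitioning the interval $[0,S]$; with that noted, the argument is complete.
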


Note that we do not prove that the framing triangulations are regular, though this is shown in the unit one-source-one-sink case in~\cite{DKK}. We do not have an example of a strongly planar balanced DAG whose framing triangulation is not regular.

\begin{proof}
	Since $\F_\G(1)$ is a rational polytope, it suffices to prove the result only for rational-valued flows $F$.
	Moreover, it suffices to prove the desired result only for integer-valued flows, and the result for rational-valued flows follows after multiplying by the GCD.

	Let $F$ be an integer-valued flow of strength $S$.
	By Lemma~\ref{lem:flows_g_to_hat}, the function $\hat F$ is a strength-$(mS)$ layered flow of $\hat\G$.
	By Theorem~\ref{thm:DKK}, the flow $\hat F$ gives rise to a unique clique $\hat{\K}$ of $\hat\G$ and a decomposition $\hat F=\sum_{\hat p\in\hat{\K}}a_{\hat p}\I({\hat p})$, where each coefficient $a_{\hat p}$ is a positive integer. Since $\hat F$ has strength $mS$, we have $\sum_{{\hat p}\in\hat{\K}}a_{\hat p}=mS$. Then we may rewrite the decomposition of $\hat F$ as $\hat F=\sum_{i=1}^{mS}\I(\hat p_i)$, where each ${\hat p}\in\hat{\K}$ appears in $\{{\hat p}_1,\dots,{\hat p}_{mS}\}$ with multiplicity $a_{\hat p}$.

	Since $\hat F(\alpha)=S$ for each source edge $\alpha$ of $\hat\G$, for any $i\in[m]$ exactly $S$ of the routes $\{{\hat p}_j\}$ pass through the vertex $s_i$.
	Order the routes $\{\hat p_1,\dots,\hat p_{mS}\}$ such that the routes $\{\hat p_{(i-1)S+1},\dots,\hat p_{(i-1)S+S}\}$ pass through vertex $s_i$, with $\hat p_{(i-1)S+1}\preceq_{s_i}^+\hat p_{(i-1)S+2}\preceq_{s_i}^+\dots\preceq_{s_i}^+\hat p_{(i-1)S+S}$.
	By Proposition~\ref{lem:layered-horizontal}, each route $\hat p_j$ is horizontal.

	For $j\in[S]$, define $\p^j:=\{p_j,p_{S+j},p_{2S+j},\dots,p_{(m-1)S+j}\}$ (so that $p^j_k=p_{(k-1)S+j}$).
	Since $\hat\K=\{\hat p_1,\dots,\hat p_{mS}\}$ is a clique and $\hat p_{(i-1)S+j}$ is a horizontal route of $\hat\G$ passing through $s_i$, each $\p^j$ is a layering of $\G$.
	Moreover, we now show that any two layerings $\p^i$ and $\p^j$ are compatible. Suppose without loss of generality that $i<j$.
	The routes $\{\hat p^i_1,\dots,\hat p^i_m,\hat p^j_1,\dots,\hat p^j_m\}$ are a subset of the clique $\hat{\K}$ of $\G$, hence themselves form a clique. This shows that $\p^i$ and $\p^j$ are route-compatible. Moreover, by choice of ordering of $p_i$'s, for any $k\in[m]$ we have $p^i_k=p_{(k-1)S+i}\prec_{s_i}^+p_{(k-1)S+j}=p^j_k$, so $\p^i$ and $\p^j$ are compatible with $\p^i\prec\p^j$.
	This shows that all layerings of $\{\p^1,\dots,\p^S\}$ are pairwise compatible.
	Finally, restricting the equality $\hat F=\sum_{i=1}^{mS}\I(\hat p_i)$ to the edges of $\G$ gives $F=\sum_{i=1}^{mS}\I(p_i)$, hence
	\begin{align*}
		\sum_{i=1}^S\J(\p^i)
					&=\sum_{i=1}^S\big(\sum_{j=1}^m\I(p^i_j)\big)\\
					&=\sum_{i=1}^S\big(\sum_{j=1}^m\I(p_{(j-1)S+i})\big)\\
					&=\sum_{i=1}^{mS}\I(p_i)\\
					&=F,
	\end{align*}
	realizing the flow $F$ as a sum of integer vectors of layerings. Finally, let ${\K}$ be the underlying set of the multiset $\{\p^1,\dots,\p^S\}$ and for $\p\in{\K}$, let $a_\p$ be the number of times $\p$ appears as some $\p^i$. This realizes our layering-clique combination
	\[F=\sum_{i=1}^{S}\J(\p^i)=\sum_{\p\in{\K}}a_\p\J(\p).\]

	It remains to show that no two distinct positive layering-clique combinations may realize the same integer flow. To this end, suppose for contradiction that we have an integer flow $F$ of $\G$ and that there exist distinct layering-clique combinations
	\begin{equation}\label{eqgg}F=\sum_{i=1}^{l_A}a_{\p^i}\J(\p^i)=\sum_{i=1}^{l_B}b_{\q^i}\J(\q^i),\end{equation}
	where $A=\{\p^1,\dots,\p^{l_A}\}$ and $B=\{\q^1,\dots,\q^{l_B}\}$ are layering-cliques. Index so that $\p^1\prec\dots\prec\p^{l_A}$ and $\q^1\prec\dots\prec\q^{l_B}$.

	By rearranging the sum in the middle of Equation~\eqref{eqgg}, we achieve
	\[
		F=\sum_{r\ :\ r\in\p^i\text{ for some }i\in[l_A]}\left(\sum_{i\in[l_A]\ :\ r\in\p^i}a_{\p^i}\right)\I(r).
	\]
	Letting $\hat A:=\{\hat r\ :\ r\in\p^i\text{ for some }i\in[l_A]\}$ and putting the above equation through the map $F\mapsto\hat F$ gives
	\begin{equation}\label{eqA}
		\hat F=\sum_{\hat r\in\hat A}\left(\sum_{i\in[l_A]\ :\ r\in\p^i}a_{\p^i}\right)\I(\hat r).
	\end{equation}
	Since $\p$ is a layering, the set $\hat A$ is a clique and hence the above is a positive clique combination for $\hat F$.
	Working with the second clique combination and defining the clique $\hat B$ and coefficients $b_{\q^i}$ analogously, we obtain that
	\begin{equation}\label{eqB}
		\hat F=\sum_{\hat r\in\hat B}\left(\sum_{i\in[l_B]\ :\ r\in\q^i}b_{\q^i}\right)\I(\hat r)
	\end{equation}
	is a positive clique combination for $\hat F$.
By uniqueness of Theorem~\ref{thm:DKK}, the clique combinations~\eqref{eqA} and~\eqref{eqB} must be equal, hence $\hat A=\hat B$ and for any route $\hat r\in\hat A=\hat B$, we have 
	\begin{equation}\label{eqContradiction}\sum_{i\in[l_A]\ :\ r\in\p^i}a_{\p^i}=\sum_{i\in[l_B]\ :\ r\in\q^i}b_{\q^i}.\end{equation}

	Since the layering-clique combinations of~\eqref{eqgg} are not equal, we may take $j\in\min\{l_A,l_B\}$ to be minimal so that $(a_{\p^j},\p^j)\neq(b_{\q^j},\q^j)$. We split into two cases to obtain a contradiction.
	\begin{enumerate}
		\item First, suppose that $\p^j\neq\q^j$.
			Choose $i\in[m]$ such that, without loss of generality, $p^j_i\prec_{s_j}^+q^j_i$.
			Since $\q^j\prec\q^{j'}$ for $j'\geq j$, the route $P:=p^j_i$ does not appear in $\q^{j'}$ for any $j'\geq j$.
			On the other hand, for $j'< j$ we have $\q^{j'}=\p^{j'}$ and $b_{\q^{j'}}=a_{\p^{j'}}$.
			Then
			\begin{align*}
				\sum_{c\in[l_B]\ :\ P\in\q^c}b_{\q^c}
				&=\sum_{c\in[j-1]\ :\ P\in\q^c}b_{\q^c}\\
				&=\sum_{c\in[j-1]\ :\ P\in\p^c}a_{\p^c}\\
				&<\sum_{c\in[j]\ :\ P\in\p^c}a_{\p^c}\\
				&\leq\sum_{c\in[l_A]\ :\ P\in\p^c}a_{\p^c},
			\end{align*}
			contradicting Equation~\ref{eqContradiction}.
		\item Now, suppose that $\p^j=\q^j$ but $a_{\p^j}\neq b_{\q^j}$. Suppose without loss of generality that $a_{\p^j}<b_{\q^j}$. If $j=l_A$, then choose any $i\in[m]$. Otherwise, choose any $i\in[m]$ such that $p^j_i\neq p^{j+1}_i$. Since $\p^{j}\prec\p^{j'}$ for $j'>j$, the route $P:=p^j_i$ is not in $\p^{j'}$ for any $j'>j$. 
			Then we calculate
			\begin{align*}
				\sum_{c\in[l_A]\ :\ P\in\p^c}a_{\p^c}
				&=\sum_{c\in[j]\ :\ P\in\p^c}a_{\p^c}\\
				&=a_{\p^j}+\sum_{c\in[j-1]\ :\ P\in\p^c}a_{\p^c}\\
				&<b_{\q^j}+\sum_{c\in[j-1]\ :\ P\in\p^c}a_{\p^c}\\
				&=b_{\q^j}+\sum_{c\in[j-1]\ :\ P\in\q^c}b_{\q^c}\\
				&\leq\sum_{c\in[l_B]\ :\ P\in\q^c}b_{\q^c},
			\end{align*}
			contradicting Equation~\ref{eqContradiction}.
	\end{enumerate}
	This completes the proof that no two distinct layering-clique combinations may realize the same integer flow of $\F$, and the proof of the theorem.
\end{proof}

Theorem~\ref{thm:triangulation} states that the layering-cliques index a unimodular triangulation of $\F_\G(1)$ (where unimodularity follows from the final statement of the theorem):

\begin{cor}\label{cor:triangulation}
	If $\mathcal C$ is the set of layering-cliques of a strongly planar balanced DAG $\G$, then
	\[
		\{\Delta_1(\K)\ :\ \K\in\mathcal C\}
	\]
	is a unimodular triangulation of $\F_\G(1)$ which we call the \emph{framing triangulation} of $\F_\G(1)$.
\end{cor}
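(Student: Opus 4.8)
The plan is to deduce the corollary formally from Theorem~\ref{thm:triangulation}: its existence clause gives the covering property, its uniqueness clause gives the intersection-in-a-face property, and its integer-coefficient clause gives unimodularity. I would first record that for every layering-clique $\K$ the vertices $\{\J(\p):\p\in\K\}$ of $\Delta_1(\K)$ are affinely independent, so that $\Delta_1(\K)$ is a genuine $(|\K|-1)$-simplex whose faces are exactly the subsimplices $\Delta_1(\K')$ with $\K'\subseteq\K$, and $\Delta_1(\K)\subseteq\Delta_1(\K')$ if and only if $\K\subseteq\K'$. Affine independence is forced by uniqueness: a nontrivial affine dependence among the $\J(\p)$, split into its positive and negative parts and normalized, would give two distinct positive unit layering-clique combinations of a single flow, supported on disjoint sub-layering-cliques of $\K$.

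For the covering property, if $F\in\F_1(\G)$ then Theorem~\ref{thm:triangulation} yields a layering-clique $\K$ with $F=\sum_{\p\in\K}a_\p\J(\p)$, $a_\p\ge 0$; choosing a maximal layering-clique $\K'\supseteq\K$ gives $F\in\Delta_1(\K)\subseteq\Delta_1(\K')$, so $\F_1(\G)=\bigcup_{\K\in\mathcal C}\Delta_1(\K)$. For the intersection property, take any $F\in\Delta_1(\K_1)\cap\Delta_1(\K_2)$ (if there is none the condition is vacuous): writing $F$ as a unit combination over $\K_1$ and deleting zero coefficients gives a positive layering-clique combination of $F$ supported on some nonempty $\K_1'\subseteq\K_1$, and similarly one supported on some $\K_2'\subseteq\K_2$; uniqueness in Theorem~\ref{thm:triangulation} forces $\K_1'=\K_2'$ with matching coefficients, so $F\in\Delta_1(\K_1')\subseteq\Delta_1(\K_1\cap\K_2)$. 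The reverse inclusion being trivial, $\Delta_1(\K_1)\cap\Delta_1(\K_2)=\Delta_1(\K_1\cap\K_2)$, a common face of both by the first paragraph. Moreover the relative interiors of the $\Delta_1(\K)$ over \emph{all} layering-cliques $\K$ partition $\F_1(\G)$ — each unit flow lies in the relative interior of the simplex on the support of its positive combination, and in no other — so the $\Delta_1(\K)$ form a geometric simplicial complex whose support is the $d$-polytope $\F_1(\G)$; by the standard fact that such a complex is pure of dimension $d$ (if a maximal face had dimension $<d$, a limiting argument using that there are finitely many faces contradicts maximality), every maximal $\Delta_1(\K)$ is $d$-dimensional, as required by Definition~\ref{defn:triangulation}.

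It remains to verify unimodularity, i.e.\ that $\Delta_1(\K)$ has normalized volume $1$ for $\K$ maximal. Here I would pass to the cone $C=\mathrm{cone}(\F_1(\G))\subseteq\mathbb R^E$ of all nonnegative flows of $\G$, whose lattice points are exactly the integer-valued nonnegative flows. Each $\Delta_1(\K)$ spans the simplicial subcone $C_\K=\mathrm{cone}\{\J(\p):\p\in\K\}$, and each $\J(\p)$ is a primitive lattice vector, since $\J(\p)$ takes the value $1$ on the edge by which $p_i$ leaves $s_i$. By the integrality clause of Theorem~\ref{thm:triangulation}, every lattice point of $C$ is a $\mathbb Z_{\ge0}$-combination of the indicator vectors of the layering-clique realizing it; arguing as in the intersection step, every lattice point of $C_\K$ is a $\mathbb Z_{\ge0}$-combination of $\{\J(\p):\p\in\K\}$. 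A short translation argument — adding a large multiple of an integer flow lying in the interior of $C_\K$ — promotes this to: $\{\J(\p):\p\in\K\}$ is a $\mathbb Z$-basis of the lattice $\mathbb Z^E\cap\mathrm{span}(C)$. Hence $C_\K$ is a unimodular cone and $\Delta_1(\K)$ has normalized volume $1$, completing the proof.

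I expect everything except unimodularity to be pure bookkeeping; the one subtlety to keep in mind is that unimodularity genuinely requires the integer-coefficient clause of Theorem~\ref{thm:triangulation} and not merely uniqueness (an empty lattice simplex can fail to be unimodular in dimension $\ge 3$), so the reduction to counting the lattice points of the cone $C_\K$ — equivalently, to showing the half-open fundamental parallelepiped of $C_\K$ contains no nonzero lattice point — is the real content of the argument.
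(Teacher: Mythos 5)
Your proposal is correct and follows the same route as the paper, which states Corollary~\ref{cor:triangulation} as an immediate consequence of Theorem~\ref{thm:triangulation} without writing out the deduction. Your write-up supplies exactly the bookkeeping the paper leaves implicit---affine independence and the face/intersection properties from the uniqueness clause, purity from the partition of relative interiors, and unimodularity from the integrality clause via the cone of nonnegative flows---and all of these steps are sound.
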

\begin{proof}
	Recall the definition of a triangulation from Definition~\ref{defn:triangulation}. 
	Property (1) is satisfied because every flow $F$ is realized as a clique combination
	\[F=\sum_{\p\in K}a_\p\J(\p),\]
	and hence lies in $\Delta_1(\K)$.
	Property (2) is satisfied because the faces of $\Delta_1(\K)$ are precisely those simplices $\Delta_1(\K')$ where $\K'\subseteq\K$.
	Property (3) is satisfied because $\Delta_1(\K_1)\cap\Delta_1(\K_2)=\Delta_1(\K_1\cap\K_2)$ for layering-cliques $\K_1$ and $\K_2$.

	Finally, for any layering-clique $\K$ the final statement of Theorem~\ref{thm:triangulation} shows that the set of vectors $\{\J(\p)\ :\ \p\in\K\}$ form a $\mathbb Z$-basis of their linear span. Hence, the cone over $\D_1(\K)$ is a unimodular cone within its linear span and $\D_1(\K)$ is itself a unimodular simplex within its affine span.
\end{proof}

\begin{example}
	Figure~\ref{fig:small_example} shows a strongly planar balanced DAG $\G$. It has 8 maximal layering-cliques, shown on the right (arranged into a poset structure which we will explain later). On the bottom-left is the unit flow polytope whose lattice points are labelled by the corresponding layering. Note that the layering labelled in green corresponds to the unique lattice point of $\F_\G(1)$ which is not a vertex. Each maximal layering-clique $\K$ gives rise to a full-dimensional simplex of $\F_\G(1)$ whose vertices are given by the layerings of $\K$. For example, the bottom maximal layering-clique is highlighted along with its corresponding simplex of $\F_\G(1)$. As indicated by Corollary~\ref{cor:triangulation}, these simplices form a unimodular triangulation of $\F_\G(1)$.
	In particular, this implies that $\F_\G(1)$ has normalized volume 8.
\end{example}

Finally, the fact that layering-cliques index a triangulation on $\F_\G(1)$ implies that the layering-clique complex is pure.
\begin{cor}\label{cor:triangulation-cardinality}
	The layering-clique complex of $\G$ is pure, and any maximal layering-clique contains exactly $\dim\F_\G(1)+1$ layerings.
\end{cor}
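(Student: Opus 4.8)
The plan is to derive both statements directly from Corollary~\ref{cor:triangulation}, which says that $\{\Delta_1(\K) : \K \in \mathcal C\}$ is a unimodular triangulation of $\F_1(\G)$. The first step is to observe that every layering-clique is contained in a maximal one (since the layering-clique complex is a finite simplicial complex), so it suffices to understand the maximal ones. Then I would argue that for a maximal layering-clique $\K = \{\p^1, \dots, \p^\ell\}$, the associated simplex $\Delta_1(\K)$ — the convex hull of the indicator vectors $\J(\p^1), \dots, \J(\p^\ell)$ — must be a $d$-dimensional simplex, where $d = \dim \F_1(\G)$. This is because $\Delta_1(\K)$ is one of the maximal-dimensional simplices in the triangulation $\mathcal T = \{\Delta_1(\K')\}$, and by Definition~\ref{defn:triangulation} every simplex in a triangulation of a $d$-dimensional polytope is $d$-dimensional. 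A $d$-dimensional simplex has exactly $d+1$ vertices, and since the $\J(\p^i)$ are the vertices of $\Delta_1(\K)$, we get $\ell = d+1$, which simultaneously shows purity (every maximal layering-clique has the same cardinality $d+1$) and gives the count.

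The one point requiring a small amount of care is the claim that $\Delta_1(\K)$ is genuinely a simplex whose vertex set is exactly $\{\J(\p^1), \dots, \J(\p^\ell)\}$, i.e., that the indicator vectors of the layerings of a layering-clique are affinely independent. For this I would invoke Theorem~\ref{thm:triangulation}: each point of $\Delta_1(\K)$ has a \emph{unique} positive layering-clique combination, which forces the $\J(\p^i)$ to be affinely independent — if there were an affine dependence $\sum c_i \J(\p^i) = 0$ with $\sum c_i = 0$ and not all $c_i$ zero, one could perturb a positive combination by a small multiple of this relation to obtain a second positive layering-clique combination of the same flow, contradicting uniqueness. Hence $\Delta_1(\K)$ is an $(\ell-1)$-simplex with vertex set exactly $\{\J(\p^1), \dots, \J(\p^\ell)\}$. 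Combined with the fact that it is a maximal-dimensional cell of a triangulation of a $d$-polytope, $\ell - 1 = d$.

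I expect the main (and only mild) obstacle to be phrasing the affine-independence argument cleanly, or alternatively sidestepping it by noting that unimodularity already guarantees each $\Delta_1(\K)$ is a unimodular — hence nondegenerate — $d$-simplex, so the vertices are automatically the $d+1$ points $\J(\p^i)$ with no repetitions; in that case the corollary is essentially immediate from Corollary~\ref{cor:triangulation} together with the standard fact that all maximal faces of a triangulation of a $d$-polytope have dimension $d$. Everything else — purity of the complex, the cardinality count — then follows formally.
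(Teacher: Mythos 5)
Your proposal is correct and follows essentially the same route as the paper: both deduce from Corollary~\ref{cor:triangulation} that $\Delta_1(\K)$ is a full-dimensional simplex of the triangulation, hence has $\dim\F_1(\G)+1$ vertices, which are exactly the indicator vectors of the layerings of $\K$. Your extra perturbation argument for affine independence via the uniqueness in Theorem~\ref{thm:triangulation} is a valid way to justify a step the paper leaves implicit.
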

\begin{proof}
	If $\K$ is a maximal layering-clique of $\G$, then the indicator vectors of layerings of $\K$ form the vertices of the simplex $\Delta_1(\K)$. The simplex $\Delta_1(\K)$ is a full-dimensional simplex of $\F_\G(1)$ by Corollary~\ref{cor:triangulation}, hence has $\dim\F_\G(1)+1$ vertices. This shows that $|\K|=\dim\F_\G(1)$.
\end{proof}

\section{Traveling Between Adjacent Maximal Layering-cliques}
\label{sec:mutations}

We say that two maximal layering-cliques $\K$ and $\L$ of $\G$ are \emph{adjacent} if $|\K\backslash\L|=|\L\backslash\K|=1$. In other words, if the corresponding simplices of the framing triangulation of $\F_{\G}(1)$ share all but one of their vertices.
In this section, we will describe how adjacent maximal layering-cliques must relate to each other: we say that they must relate by a rotation, shuffle, or realignment move.

In the following, let $\G$ be a strongly planar balanced DAG with $m$ sources and $m$ sinks.

\subsection{The dimension zero case} 

We first treat the degenerate case when $\dim\F_\G(1)=0$.

Observe that if $\dim\F_\G(1)=0$, then $\F_\G(1)$ is merely a point. The only triangulation of $\F_\G(1)$ is $\mathcal T=\{\F_\G(1)\}$, consisting of one zero-dimensional simplex. Then by Corollary~\ref{cor:triangulation}, there is precisely one maximal layering-clique of $\G$, and this maximal layering-clique contains only one layering. This proves the following lemma:

\begin{lemma}\label{lem:dim0}
	If $\dim\F_\G(1)=0$ then there is only one maximal layering-clique $\K$ of $\G$, and $|\K|=1$.
\end{lemma}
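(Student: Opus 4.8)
The plan is to obtain the lemma as an immediate consequence of Corollary~\ref{cor:triangulation} together with Corollary~\ref{cor:triangulation-cardinality}, so that the argument is really just an unwinding of definitions in the degenerate case $\dim\F_1(\G)=0$.

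First I would note that a zero-dimensional lattice polytope $\F_1(\G)$ is a single point, and that by Definition~\ref{defn:triangulation} a triangulation of a $0$-dimensional polytope consists of $0$-dimensional simplices; the only such triangulation is $\mathcal T=\{\F_1(\G)\}$. By Corollary~\ref{cor:triangulation}, the collection $\{\Delta_1(\K)\ :\ \K\in\mathcal C\}$ of layering-clique simplices of maximal layering-cliques of $\G$ is precisely a triangulation of $\F_1(\G)$, hence equals $\{\F_1(\G)\}$. In particular $|\mathcal C|=1$, so there is a unique maximal layering-clique $\K$.

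To pin down its cardinality, I would appeal directly to Corollary~\ref{cor:triangulation-cardinality}: a maximal layering-clique contains exactly $\dim\F_1(\G)+1$ layerings, which is $0+1=1$ here. As an alternative self-contained check, one can argue that $\K$ is nonempty since the nonempty polytope $\F_1(\G)$ contains a lattice point whose preimage under $\J$ is a layering (Lemma~\ref{prop:layerings_and_flows}), while if $\K$ contained two distinct layerings $\p\neq\q$ then injectivity of $\J$ would give two distinct points $\J(\p),\J(\q)\in\Delta_1(\K)\subseteq\F_1(\G)$, contradicting that $\F_1(\G)$ is a single point.

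I do not anticipate a genuine obstacle; the only point requiring a moment's care is that ``maximal layering-clique'' is a combinatorial notion, so one must make sure, via $\J$ and the two corollaries above, that maximality cannot force extra layerings sitting over the same flow value into $\K$ — which is exactly what injectivity of $\J$ rules out.
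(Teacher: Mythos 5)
Your proposal is correct and follows essentially the same route as the paper: observe that $\F_1(\G)$ is a point whose only triangulation is the trivial one, invoke Corollary~\ref{cor:triangulation} to get a unique maximal layering-clique, and read off its cardinality from the fact that a zero-dimensional simplex has one vertex (equivalently, Corollary~\ref{cor:triangulation-cardinality}). The extra injectivity check via $\J$ is a fine, if unnecessary, belt-and-suspenders addition.
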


This section focuses on traveling between adjacent maximal layering-cliques, and Lemma~\ref{lem:dim0} shows that these cannot exist when $\dim\F_\G(1)=0$.
Hence, for the rest of this section we will focus on the case when $\dim\F_\G(1)\geq1$. In this case, Corollary~\ref{cor:triangulation-cardinality} gives us the following convenient result:

\begin{remk}\label{remk:geq1}
	If $\dim\F_\G(1)\geq1$, then any maximal layering-clique of $\G$ contains at least two layerings.
\end{remk}

\subsection{Technical Lemmas}

Before we define rotations, shuffles, and realignments, we need some technical lemmas about the structure of maximal layering-cliques.

\begin{lemma}\label{lem:b}
	Let $\K$ be a maximal layering-clique of $\G$ and let $\p,\q\in\K$ be adjacent layerings in $\prec_\K$ (i.e., such that there is no $\r\in\K$ with $\p\prec\r\prec\q$ or $\q\prec\r\prec\p$). Then there is a (necessarily unique) index $i\in[m]$ such that $p_i\neq q_i$ and $p_j=q_j$ for all $j\neq i$.
\end{lemma}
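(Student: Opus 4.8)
\medskip
\noindent\emph{Proof plan.} Relabelling if necessary, I will assume $\p\prec\q$, so that writing $\K=\{\p^1\prec\cdots\prec\p^l\}$ we have $\p=\p^a$ and $\q=\p^{a+1}$ for some $a$. Since $\p\neq\q$, at least one index $k$ has $p_k\neq q_k$, and the content of the lemma is that there is at most one such index. The plan is to argue by contradiction: assuming that two coordinates differ, I will construct a layering $\r$ with $\p\prec\r\prec\q$ which is compatible with every layering of $\K$, contradicting the adjacency of $\p$ and $\q$ together with the maximality of $\K$.

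The first step is to record a structural observation: $\routes(\K)$ is a clique of $\G$. Indeed, any two routes in $\routes(\K)$ either lie in a single layering of $\K$, hence are compatible, or lie in $\routes(\{\p^b,\p^c\})$ for two layerings $\p^b,\p^c\in\K$, which is a clique because $\p^b$ and $\p^c$ are route-compatible. As an immediate consequence, for each $k\in[m]$ and each $b\le c$ we have $p^b_k\preceq_{s_k}^+p^c_k$ (unwinding the definition of $\preceq$), and all horizontal $k$-routes occurring in $\routes(\K)$ are pairwise compatible.

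Now suppose for contradiction that $p_{i_1}\neq q_{i_1}$ and $p_{i_2}\neq q_{i_2}$ for distinct $i_1,i_2\in[m]$, and define $\r=\{r_1,\dots,r_m\}$ by $r_{i_1}=q_{i_1}$ and $r_k=p_k$ for $k\neq i_1$. I would then verify, all routinely: (i) $\r$ is a layering, since each $r_k$ is a horizontal $k$-route and the $r_k$ are pairwise compatible because they all lie in the clique $\routes(\K)$; (ii) $\r\neq\p$ (they differ at $i_1$) and $\r\neq\q$ (they differ at $i_2$, as $r_{i_2}=p_{i_2}\neq q_{i_2}$); and (iii) $\r$ is compatible with every $\p^b\in\K$ --- route-compatibility is automatic since all routes involved lie in the clique $\routes(\K)$, and since each $r_k\in\{p_k,q_k\}$ with $p_k\preceq_{s_k}^+q_k$, one obtains $p^b_k\preceq_{s_k}^+p_k\preceq_{s_k}^+ r_k$ for all $k$ when $b\le a$ (so $\p^b\preceq\r$) and $r_k\preceq_{s_k}^+q_k\preceq_{s_k}^+p^b_k$ for all $k$ when $b\ge a+1$ (so $\r\preceq\p^b$). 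From (iii), $\K\cup\{\r\}$ is a layering-clique containing $\K$, so by maximality $\r\in\K$; but then $\p\preceq\r\preceq\q$ with $\r\notin\{\p,\q\}$ forces $\p\prec\r\prec\q$, since $\prec_\K$ is a total order, contradicting the adjacency of $\p$ and $\q$. Hence exactly one coordinate differs, and that index is clearly unique.

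I do not anticipate a serious obstacle. The only point requiring a little care is item (iii): ensuring that the candidate $\r$ is comparable to \emph{all} of $\K$, not merely to $\p$ and $\q$. This is precisely where I use that $\K$ is totally ordered coordinatewise (i.e.\ $p^b_k\preceq_{s_k}^+p^c_k$ for $b\le c$), which in turn rests on $\routes(\K)$ being a clique. Everything else is bookkeeping with the definitions of layering, compatibility, and maximal layering-clique.
\medskip
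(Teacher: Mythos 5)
Your proof is correct and follows essentially the same route as the paper's: both construct the hybrid layering obtained by replacing a single coordinate of $\p$ with the corresponding coordinate of $\q$, verify it is compatible with every layering of $\K$ (using that layerings below $\p$ and above $\q$ in $\prec_\K$ sandwich the hybrid), invoke maximality to place it in $\K$, and then use adjacency of $\p$ and $\q$ to conclude. The only difference is presentational: the paper argues directly (showing $\q\preceq\p'$ forces $p_j=q_j$ for $j\neq i'$), whereas you argue by contradiction from two differing indices; the underlying construction and key steps are identical.
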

Note that the existence of adjacent layerings implies that $\dim\F_\G(1)\geq1$ by Lemma~\ref{lem:dim0}.
\begin{proof}
	Suppose without loss of generality that $\p\prec\q$ (so that this is a covering relation in the total order $\prec_\K$ on layerings of $\K$).
	Since $\p$ and $\q$ are distinct, there is at least one value $i'\in[m]$ such that $p_{i'}\neq q_{i'}$. We will show that ${i'}$ must be unique.
	Define $\p':=\{p_1,\dots,p_{{i'}-1},q_{i'},p_{{i'}+1},\dots,p_m\}$ (i.e., $\p'$ is obtained by taking $\p$ and replacing the route $p_{i'}$ with $q_{i'}$).
	Since $\p$ and $\q$ are compatible as layerings, the route $q_{i'}$ is compatible with the route $p_j$ for all $j\in[m]$, so $\p'$ is a valid layering of $\G$.
	Moreover, we now show that the layering $\p'$ is compatible with every layering of $\K$.
	Let $\r$ be any layering of $\K$.
	Since $\p'$ is built from routes $p_i$ and $q_i$, and $\r$ is compatible with both $\p$ and $\q$, it must be true that $\{p'_1,\dots,p'_m,r_1,\dots,r_m\}$ is a clique. Hence, $\r$ and $\p'$ are route-compatible layerings. It remains to show that either $p'_i\preceq_{s_i}^+r_i$ for all $i\in[m]$, or $p'_i\succeq_{s_i}^+r_i$ for all $i\in[m]$.

	Since $\p\prec_\K\q$ is a covering relation of $\prec_\K$, either $\r\preceq_\K\p$ and $\r\preceq_\K\q$ or $\p\preceq_\K\r$ and $\q\preceq_\K\r$. Suppose first that $\r\preceq_\K\p$ and $\r\preceq_\K\q$.
	Then for all $j\in[m]$, we have both $r_j\preceq_{s_j}^+p_j$ and $r_j\preceq_{s_j}^+q_j$. This means that $r_j\preceq_{s_j}^+p'_j$ for all $j\in[m]$, completing the proof that $\p'$ is compatible with $\r$ in this case. The proof in the case when
	$\p\preceq_\K\r$ and $\q\preceq_\K\r$ is the same, but with the directions of all symbols $\preceq$ switched. 

	We have shown that $\p'$ is compatible with every layering of $\K$, hence $\p'\in\K$ by maximality of $\K$. Since $p_{i'}\prec_{s_{i'}}^+q_{i'}=p'_{i'}$, we must have $\p\prec_\K\p'$. Since $\p\prec_\K\q$ is a covering relation, this means that $\p\prec_\K\q\preceq_\K\p'$.
	In particular, for any $j\neq i'$, we have
	\[p_j\preceq_{s_j}^+q_j\preceq_{s_j}^+p'_j=p_j,\]
	hence equality holds throughout. This completes the proof that $p_j=q_j$ for all $j\neq i'$.
\end{proof}

\begin{defn}\label{defn:updown}
	Suppose $\dim\F_\G(1)\geq1$.
	Let $\K$ be a maximal layering-clique and let $\p\in\K$. If $\p$ is not the maximum element of $\prec_\K$, then take $\q$ to be the layering covering $\p$ in $\prec_\K$ and define $\up_\K(\p)$ to be the (unique by Lemma~\ref{lem:b}) index $i\in[m]$ such that $p_i\neq q_i$. If $\p$ is the maximum element of $\prec_\K$, then $\up_\K(\p)$ is undefined. Similarly, if $\p$ is not the minimum element of $\prec_\K$ then take the layering $\r$ covered by $\p$ in $\prec_\K$ and define $\down_\K(\p)$ to be the index $j\in[m]$ such that $p_j\neq r_j$.
\end{defn}

\begin{lemma}
	\label{lem:updown}
	Suppose $\dim\F_\G(1)\geq1$.
	Let $\K=\{\p^1,\dots,\p^S\}$ be a maximal layering-clique and take an index $j\in[S]$.
	The following are equivalent:
	\begin{enumerate}
		\item there exists a route $p^j_i$ of $\p^j$ which appears in no other layering of $\K$,
		\item the set $\{\up_\K(\p^j),\down_\K(\p^j)\}$ consists of a single element which we define as $\updown_\K(\p^j)$, and
		\item the route $p^j_{\updown_\K(\p^j)}$ is the unique route of $\p^j$ which appears in no other layering of $\K$.
	\end{enumerate}
\end{lemma}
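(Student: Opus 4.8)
The plan is to isolate one clean criterion describing exactly when a route of $\p^j$ fails to appear in any other layering of $\K$, and then to read the three equivalences off of it. Write $\K=\{\p^1\prec_\K\cdots\prec_\K\p^S\}$; since $\dim\F_1(\G)\geq1$ we have $S\geq 2$ by Remark~\ref{remk:geq1}. First I would record two elementary facts. (a) For each fixed $i\in[m]$, the horizontal $i$-routes $p^1_i,p^2_i,\dots,p^S_i$ form a weakly increasing sequence in the total order $\preceq_{s_i}^+$; this is immediate from Definition~\ref{defn:layering-compatible}, since $\p^k\prec_\K\p^{k'}$ forces $p^k_i\preceq_{s_i}^+ p^{k'}_i$. (b) The route $p^j_i$ lies in the layering $\p^k$ if and only if $p^j_i=p^k_i$, since by the definition of a layering $\p^k$ contains exactly one horizontal $i$-route, namely $p^k_i$ (a route $p^k_\ell$ with $\ell\neq i$ begins at the source $s_\ell\neq s_i$).

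The main step combines (a) and (b) with Lemma~\ref{lem:b}. By (a), $p^j_i$ can coincide with $p^k_i$ for some $k<j$ only when $p^{j-1}_i=p^j_i$, and with $p^k_i$ for some $k>j$ only when $p^{j+1}_i=p^j_i$. By Lemma~\ref{lem:b} and Definition~\ref{defn:updown}, $\p^{j-1}$ and $\p^j$ differ in exactly the coordinate $\down_\K(\p^j)$, and $\p^j$ and $\p^{j+1}$ differ in exactly the coordinate $\up_\K(\p^j)$. Combining with (b), the conclusion is: $p^j_i$ appears in no layering of $\K$ other than $\p^j$ if and only if ($j=1$ or $i=\down_\K(\p^j)$) and ($j=S$ or $i=\up_\K(\p^j)$), where a clause mentioning an undefined symbol is simply dropped (which happens exactly when $j=1$, resp.\ $j=S$); the reverse implication uses (a) and Lemma~\ref{lem:b} in exactly the same way.

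Granting this criterion, I would finish with three short implications. For $(1)\Rightarrow(2)$: an index $i$ witnessing $(1)$ must equal $\up_\K(\p^j)$ whenever that is defined and equal $\down_\K(\p^j)$ whenever that is defined; since $S\geq 2$ at least one of the two is defined, and the criterion forces them to coincide when both are, so the set $\{\up_\K(\p^j),\down_\K(\p^j)\}$ is a singleton. For $(2)\Rightarrow(3)$: if that set is the singleton $\{\updown_\K(\p^j)\}$, then $\updown_\K(\p^j)$ agrees with $\up_\K(\p^j)$ whenever defined and with $\down_\K(\p^j)$ whenever defined, so $i=\updown_\K(\p^j)$ satisfies the criterion and $p^j_{\updown_\K(\p^j)}$ lies in no other layering of $\K$; moreover the criterion forces any index $i$ with $p^j_i$ lying in no other layering to equal $\updown_\K(\p^j)$, giving the uniqueness asserted in $(3)$. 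Finally $(3)\Rightarrow(1)$ is immediate.

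I do not expect a genuine obstacle here: the only delicate point is the bookkeeping at the boundary indices $j=1$ and $j=S$, where one of $\up_\K(\p^j)$, $\down_\K(\p^j)$ is undefined and $(2)$ holds automatically; keeping the ``or'' clauses of the criterion synchronized with the convention that $\{\up_\K(\p^j),\down_\K(\p^j)\}$ lists only the defined entries makes all cases uniform. The strongly planar and layering structure enters only through facts (a) and (b), both of which are immediate from the definitions, with Lemma~\ref{lem:b} supplying the remaining input.
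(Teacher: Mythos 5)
Your proof is correct and rests on the same two ingredients as the paper's: Lemma~\ref{lem:b} (adjacent layerings in $\prec_\K$ differ in exactly one coordinate) together with the monotonicity of the routes $p^1_i\preceq_{s_i}^+\cdots\preceq_{s_i}^+p^S_i$ coming from Definition~\ref{defn:layering-compatible}. Packaging these into a single criterion for when $p^j_i$ is absent from all other layerings is only an organizational difference from the paper's direct case analysis on $j=1$, $j=S$, and $j\in\{2,\dots,S-1\}$, so this is essentially the same argument.
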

Condition (2) amounts to the condition that either $j=0$ (so that $\down_\K(\p^j)$ is not defined), or $j=S$ (so that $\up_\K(\p^j)$ is not defined), or $j\in\{2,\dots,S-1\}$ and $\up_\K(\p^j)=\down_\K(\p^j)$. We then define $\updown_\K(\p^j)$ to be $\down_\K(\p^j)$ or $\up_\K(\p^j)$. If $\up_\K(\p^j)$ and $\down_\K(\p^j)$ both exist but are not equal, then $\updown_\K(\p^j)$ is undefined.
\begin{proof}
	The condition that $\dim\F_\G(1)\geq1$ implies that $S>1$ by Remark~\ref{remk:geq1}.
	We show that (2) is equivalent to both other conditions. First, suppose (2) holds. We consider separately the cases where $j=1$, where $j=S$, and where $j\in\{2,\dots,S-1\}$ and $\up_\K(\p^j)=\down_\K(\p^j)$.
	\begin{enumerate}
		\item Suppose $j=1$. Since $S>1$, the index $\up_\K(\p^1)$ is defined. Then $\updown_\K(\p^1):=\up_\K(\p^1)$; refer to this index as $u$ in the following calculation for brevity. For any $k\in[m]$ such that $k\geq2$, we have $\p^k\succeq\p^2$ and hence $p^k_{u}\succeq_{s_{u}}^+p^2_{u}\succ_{s_u}^+ p^1_{u}$, so $p^1_u$ appears in $\p^k$ precisely when $k=1$. This shows (1). Moreover, by Lemma~\ref{lem:b} we must have $p^1_k=p^1_u$ for all $k\neq u$, hence $u=\updown_\K(\p^1)$ is the unique route of $\p^1$ which appears in no other layering of $\K$, proving (3).
		\item If $j=S$, then a symmetric argument to the above case shows (1) and (3).
		\item Suppose $j\in\{2,\dots,S-1\}$ and $\up_\K(\p^j)=\down_\K(\p^j)$. By Lemma~\ref{lem:b}, for any $k\neq\updown_\K(\p^j)$ we have $p^j_k=p^{j+1}_k=p^{j-1}_k$, hence $p^j_{\updown_\K(\p^j)}$ is the unique route of $\p^j$ which appears in no other layering of $\K$, showing (1) and (3).
	\end{enumerate}
	We have shown that (2) implies (1) and (3). Now suppose (2) fails. Then $j\in\{2,\dots,S-1\}$ and $\up_\K(\p^j)\neq\down_\K(\p^j)$.
	By Definition~\ref{defn:updown}, for $k\in[m]$ we have
	\[
		p^j_k=
		\begin{cases}
			p^{j+1}_k & k\neq\up_\K(\p^j)\\
			p^{j-1}_k & k=\up_\K(\p^j).
		\end{cases}
	\]
	This shows that $p^j_k$ appears in $\p^{j+1}$ or $\p^{j-1}$ for every $k\in[m]$, showing that (1) and (3) both fail. This completes the proof that (2) is equivalent to both (1) and (3).
\end{proof}

\subsection{Shuffles, rotations, and realignments}

We now describe how adjacent maximal layering-cliques $\K$ and $\L$ must relate to each other. We will argue that $\L$ is either a \emph{shuffle}, a \emph{rotation}, or a \emph{realignment} of $\K$.
First, we describe shuffles.

\begin{prop}
	\label{prop:shuffle}
	Suppose $\dim\F_\G(1)\geq1$.
	Let $\K=\{\p^1,\dots,\p^S\}$ be a maximal layering-clique. Suppose for some $i\in[m]$ that every route of $\p^i$ is in another layering of $\K$. Then $i\in\{2,\dots,S-1\}$ and $\up_\K(\p^i)\neq\down_\K(\p^i)$ and the layering
	\[\r=(\p^i\backslash\{p^i_{\up_\K(\p^i)},p^i_{\down_\K(\p^i)}\})\cup\{p^{i+1}_{\up_\K(\p^i)},p^{i-1}_{\down_\K(\p^i)}\}\]
	is compatible with all routes of $\K\backslash\{\p^i\}$ but incompatible with $\p^i$, so that 
	\[\L:=(\K\backslash\{\p^i\})\cup\{\r\}=\{\p^1\prec\dots\p^{i-1}\prec\r\prec\p^{i+1}\prec\dots\prec\p^S\}\]
	is a maximal layering-clique.
\end{prop}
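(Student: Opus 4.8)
The plan is to fix $u:=\up_\K(\p^i)$ and $d:=\down_\K(\p^i)$ and to understand $\r$ entirely through its relation to the three consecutive layerings $\p^{i-1}\prec\p^i\prec\p^{i+1}$. The first assertion ($i\in\{2,\dots,S-1\}$ and $u\neq d$) follows from Lemma~\ref{lem:updown}: the hypothesis that every route of $\p^i$ lies in another layering of $\K$ is exactly the failure of condition~(1) of that lemma, hence condition~(2) fails, and since $S>1$ by Remark~\ref{remk:geq1} the only way (2) can fail is $i\notin\{1,S\}$ together with $u\neq d$; in particular both $\p^{i-1}$ and $\p^{i+1}$ exist. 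By Definition~\ref{defn:updown}, $\p^{i+1}$ agrees with $\p^i$ in every coordinate except $u$ and $\p^{i-1}$ agrees with $\p^i$ in every coordinate except $d$; as $u\neq d$ this gives $p^{i+1}_d=p^i_d$ and $p^{i-1}_u=p^i_u$, and $\r$ has the explicit coordinate description $r_k=p^i_k$ for $k\notin\{u,d\}$, $r_u=p^{i+1}_u$, $r_d=p^{i-1}_d$.

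Next I would check that $\r$ is a genuine layering and is route-compatible with every layering of $\K\setminus\{\p^i\}$. Each $r_k$ is a horizontal $k$-route (it equals one of $p^i_k$, $p^{i+1}_u$, $p^{i-1}_d$), and every route occurring among $\r$ and the layerings of $\K\setminus\{\p^i\}$ lies in $\routes(\K)$, which is a clique (any two of its routes are routes of a single layering of $\K$ or of two compatible layerings of $\K$, hence compatible); a subset of a clique is a clique, so the routes of $\r$ are pairwise compatible and $\r$ is route-compatible with each $\p^j$ with $j\neq i$. In particular $\L:=(\K\setminus\{\p^i\})\cup\{\r\}$ is route-compatible, so $\prec$ restricts to a partial order on $\L$.

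The crux is the order comparisons. Using that $\prec_\K$ is a total order on $\K$ and the definition of $\prec$, for $j<i$ I would verify $p^j_k\preceq_{s_k}^+r_k$ in each coordinate regime: for $k\notin\{u,d\}$ via $\p^j\preceq_\K\p^i$ and $r_k=p^i_k$; for $k=u$ via $\p^j\preceq_\K\p^{i+1}$ and $r_u=p^{i+1}_u$; for $k=d$ via $\p^j\preceq_\K\p^{i-1}$ and $r_d=p^{i-1}_d$. Together with route-compatibility this yields $\p^j\prec\r$, and the symmetric computation yields $\r\prec\p^j$ for $j>i$; specializing to $j=i-1$ and $j=i+1$ gives $\p^{i-1}\prec\r\prec\p^{i+1}$, with strictness because $r_u=p^{i+1}_u\neq p^i_u=p^{i-1}_u$ and $r_d=p^{i-1}_d\neq p^i_d=p^{i+1}_d$. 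Hence $\prec_\L$ is the total order $\p^1\prec\dots\prec\p^{i-1}\prec\r\prec\p^{i+1}\prec\dots\prec\p^S$, so $\L$ is a layering-clique. For incompatibility of $\r$ with $\p^i$: $u=\up_\K(\p^i)$ forces $p^i_u\prec_{s_u}^+p^{i+1}_u=r_u$, which obstructs $\r\preceq\p^i$, while $d=\down_\K(\p^i)$ forces $r_d=p^{i-1}_d\prec_{s_d}^+p^i_d$, which obstructs $\p^i\preceq\r$; so $\r$ and $\p^i$ are incompatible regardless of whether they happen to be route-compatible. Finally $\L$ is maximal since $|\L|=|\K|=\dim\F_1(\G)+1$ by Corollary~\ref{cor:triangulation-cardinality} and the layering-clique complex is pure, so a layering-clique of that cardinality cannot be properly contained in a larger one.

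I expect the coordinatewise $\preceq_{s_k}^+$-comparisons of the third paragraph to be the only real obstacle: one must keep track of which of $\p^{i-1},\p^i,\p^{i+1}$ each coordinate of $\r$ is inherited from, and maintain strictness exactly where it is needed to separate $\r$ from $\p^{i-1}$ and $\p^{i+1}$. The remaining steps — the opening reduction to $i\in\{2,\dots,S-1\}$ with $u\neq d$, the layering and route-compatibility checks, the incompatibility with $\p^i$, and maximality — are all short once the coordinate description of $\r$ relative to the three consecutive layerings is established.
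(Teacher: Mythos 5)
Your proposal is correct and follows essentially the same route as the paper: reduce via Lemma~\ref{lem:updown} to $i\in\{2,\dots,S-1\}$ with $\up_\K(\p^i)\neq\down_\K(\p^i)$, verify route-compatibility from the fact that all routes of $\r$ already lie in $\routes(\K)$, compare $\r$ coordinatewise against $\p^j$ for $j<i$ and $j>i$, read off the incompatibility with $\p^i$ at the coordinates $\up_\K(\p^i)$ and $\down_\K(\p^i)$, and conclude maximality from Corollary~\ref{cor:triangulation-cardinality}. The only (harmless) difference is organizational: you split the coordinate comparison into three cases where the paper merges two of them.
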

Intuitively, $\r$ is obtained from $\p^i$ by replacing the route $p^i_{\up_\K(\p^i)}$ with $p^{i+1}_{\up_\K(\p^i)}$, and replacing the route $p^i_{\down_\K(\p^i)}$ with the route $p^{i-1}_{\down_\K(\p^i)}$.
\begin{proof}
	By our assumption on $\p^i$, we have that Lemma~\ref{lem:updown}~(1) fails, hence Lemma~\ref{lem:updown}~(2) fails so that $i\in\{2,\dots,S-1\}$ and $\up_\K(\p^i)\neq\down_\K(\p^i)$. It remains to show that the layering $\r$ has the desired property.

	Define the route $\r:=(\p^i\backslash\{p^i_{\up_\K(\p^i)},p^i_{\down_\K(\p^i)}\})\cup\{p^{i+1}_{\up_\K(\p^i)},p^{i-1}_{\down_\K(\p^i)}\}$.
	First note that $\r$ and $\p^i$ are not compatible, since 
	\[r_{\up_\K(\p^i)}=p^{i+1}_{\up_\K(\p^i)}\succ_{s_{\up_\K(\p^i)}}^+ p^i_{\up_\K(\p^i)}\text{ and }r_{\down_\K(\p^i)}=p^{i-1}_{\down_\K(\p^i)}\prec_{s_{\down_\K(\p^i)}}^+ p^i_{\down_\K(\p^i)}.\]

	We now show that $\r$ is compatible with every layering of $\K\backslash\{\p^i\}$.
	To this end, choose $j\in[S]$ distinct from $i$; we will show that $\r$ is compatible with $\p^j$.
	Since all routes of $\r$ are in layerings of $\K$, we have that $\{\r,\p^j\}$ are route-compatible.
	It remains to show that $\r\prec\p^j$ or $\p^j\prec\r$.

	Suppose first that $j<i$. For any $k\in[m]$ distinct from $\down_\K(\p^i)$, we have
	$p^j_k\preceq_{s_k}^+p^i_k\preceq r_k$.
	Moreover, if $k=\down_\K(\p^i)$ then
	$p^j_k\preceq_{s_k}^+p^{i-1}_k=r_k$.
	This shows that $\p^j\preceq\r$, hence $\r$ is compatible with $\p^j$. If $j>i$, then a symmetric argument shows that $\p^j\succeq\r$, so in this case $\r$ is again compatible with $\p^j$.
	This shows that $\L:=(\K\backslash\{\p^i\})\cup\{\r\}$ is a layering-clique
	with 
$\{\p^1\prec\dots\p^{i-1}\prec\r\prec\p^{i+1}\prec\dots\prec\p^S\}$.
	Since the cardinality of $\L$ is the same as that of $\K$, it is a maximal layering-clique by Corollary~\ref{cor:triangulation-cardinality}.
\end{proof}

Proposition~\ref{prop:shuffle} motivates the following definition.

\begin{defn}
	With the setup and notation of Proposition~\ref{prop:shuffle}, if $\up_\K(\p^i)<\down_\K(\p^i)$ then we say that $\L$ is a \emph{down-shuffle of $\K$ at $\p^i$} and we write
	\[\K\xrightarrow{\shuffle}\L.\]
	On the other hand, if $\up_\K(\p^i)>\down_\K(\p^i)$, then we call $\L$ an \emph{up-shuffle of $\K$ at $\p^i$} and we write $\K\xleftarrow{\shuffle}\L$.
	In either case, we call $\L$ a \emph{shuffle of $\K$ at $\p^i$}.
\end{defn}

Shuffling moves are one way of traveling from a maximal layering-clique of $\G$ to an adjacent maximal layering-clique by replacing the layering $\p^i$ with a different layering $\r$.
Note that in the notation of Proposition~\ref{prop:shuffle}, every route of $\r$ is in $\p^{i+1}$ or $\p^{i-1}$ with $\up_{\L}(\r)=\down_\K(\p^i)$ and $\down_{\L}(\r)=\up_\K(\p^i)$, hence performing a shuffle of $\L$ at $\r$ retrieves the original layering-clique $\K$.
This shows that a maximal layering-clique $\L$ is an up-shuffle of $\K$ if and only if $\K$ is a down-shuffle of $\L$.

In Figure~\ref{fig:three-Mutations}, the bottom-left layering-clique is a down-shuffle move of the top layering-clique at the magenta layering.

We now aim to describe two more ways to replace a layering $\p^i$ of a layering-clique $\K$ with a different layering to get a new maximal layering-clique. Both moves must take place in the setting when there exists some route of $\p^i$ which is in no other layering of $\K$, as the case where every route of $\p^i$ occurs in another layering of $\K$ is covered by Proposition~\ref{prop:shuffle}.
First, we need the following lemma.

\begin{lemma}\label{lem:hor-routes-maximal}
	Let $\K$ be a maximal layering-clique of $\G$. If $p$ is a horizontal route of $\G$ which is compatible with every route of $\routes(\K)$, then $p\in\routes(\K)$.
\end{lemma}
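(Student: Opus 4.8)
The plan is to show that the singleton set $\{p\}$ is itself (up to adding more routes) the image of a layering, and then leverage the maximality of $\K$ together with the triangulation structure to force $p$ into $\routes(\K)$. More precisely, let $p$ be a horizontal $i$-route for some $i \in [m]$. The first step is to build, from $\K$, a layering $\p'$ whose $i$-th route is $p$. To do this, fix any layering $\q \in \K$ (which exists, since a maximal layering-clique is nonempty) and consider replacing $q_i$ by $p$. Since $p$ is a horizontal $i$-route and $p$ is compatible with every route of $\routes(\K) \supseteq \{q_1,\dots,q_m\}$, the set $\{q_1,\dots,q_{i-1},p,q_{i+1},\dots,q_m\}$ is a collection of pairwise compatible horizontal routes with the $j$-th one horizontal of type $j$; hence it is a valid layering $\p'$ of $\G$.

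The second step is to check that $\p'$ is compatible with every layering of $\K$. Let $\r \in \K$ be arbitrary. Because every route of $\p'$ lies in $\routes(\K) \cup \{p\}$ and $p$ is compatible with all of $\routes(\K)$, the union $\{p'_1,\dots,p'_m,r_1,\dots,r_m\}$ is a clique of $\G$, so $\p'$ and $\r$ are route-compatible. It then remains to verify that $\prec$ restricted to $\{\p'\} \cup \K$ is a total order; equivalently, that for each $\r \in \K$ either $p'_k \preceq_{s_k}^+ r_k$ for all $k$ or $p'_k \succeq_{s_k}^+ r_k$ for all $k$. The subtlety here is the $i$-th coordinate: for $k \neq i$ we have $p'_k = q_k$, and $q$ already sits in the total order $\prec_\K$, so $q$ and $\r$ are comparable and the inequalities for $k\neq i$ all point the same way; since $p$ is route-compatible with $r_i$, the two routes $p = p'_i$ and $r_i$ are comparable under $\preceq_{s_i}^+$, and I claim this comparison must agree with the common direction of the other coordinates — otherwise $\p'$ and $\r$ would be route-compatible but incomparable, which is exactly the incompatibility-of-layerings situation, and I need to rule it out. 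The cleanest way to handle this is to pass to the two-point extension: route-compatibility of $\{p'_1,\dots,p'_m,r_1,\dots,r_m\}$ means the corresponding hatted routes form a clique of $\hat\G$, and within a clique of a planar-framed DAG the routes are totally ordered from bottom to top (as noted after Definition~\ref{defn:layering-compatible}); this total order restricted to the $\hat p'_k$ and $\hat r_k$ forces the coordinatewise comparison between $\p'$ and $\r$ to be consistent. Thus $\p'$ is compatible with every layering of $\K$, so by maximality of $\K$ we conclude $\p' \in \K$, and therefore $p = p'_i \in \routes(\K)$.

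The step I expect to be the main obstacle is the consistency-of-direction argument in the previous paragraph — ensuring that the $i$-th coordinate comparison $p'_i$ versus $r_i$ cannot disagree with the coordinates $k \neq i$. One must be careful that $q$ itself need not be below or above $\r$ "in the right direction relative to $p$," so the argument genuinely needs the global non-crossing (clique) structure rather than a purely coordinatewise comparison. Invoking the fact that cliques of planar-framed DAGs are the non-crossing route collections (Remark~\ref{remk:compat_planar}) and that such a clique is linearly ordered bottom-to-top does the job: if $p'_i$ were above $r_i$ while, say, $p'_1$ were below $r_1$, then the non-crossing routes $\hat p'_i, \hat r_i, \hat p'_1, \hat r_1$ could not be simultaneously ordered, a contradiction. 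Everything else (well-definedness of $\p'$, route-compatibility with each $\r$, and the final appeal to maximality and Corollary~\ref{cor:triangulation-cardinality} if one wants to double-check cardinalities) is routine.
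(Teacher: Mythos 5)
There is a genuine gap, and it sits exactly where you predicted: the consistency-of-direction step for the $i$-th coordinate. Your construction starts from an \emph{arbitrary} $\q\in\K$ and replaces $q_i$ by $p$. The problem is that the comparison $p$ vs.\ $r_i$ genuinely can disagree with the common direction of the coordinates $k\neq i$, and the non-crossing/total-order argument you invoke does not rule this out. Concretely, suppose $\K=\{\p^1\prec\p^2\}$, take $\q=\p^1$, and suppose $p$ lies strictly above $p^2_i$ (hence above $p^1_i$). Then for $\r=\p^2$ you get $p'_k=p^1_k\preceq_{s_k}^+ p^2_k$ for $k\neq i$ but $p'_i=p\succ_{s_i}^+ p^2_i$; if any inequality with $k\neq i$ is strict, $\p'$ and $\p^2$ are route-compatible yet $\prec$-incomparable, so $\p'$ is \emph{not} compatible with all of $\K$ and maximality gives you nothing. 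Your proposed contradiction --- that $\hat p'_1,\hat r_1,\hat r_i,\hat p'_i$ ``could not be simultaneously ordered'' --- fails: the bottom-to-top order $\hat p'_1<\hat r_1\preceq\hat r_i<\hat p'_i$ is perfectly consistent with all four routes being pairwise non-crossing, since routes from different sources being linearly ordered in a clique places no constraint forcing the coordinatewise comparisons of two layerings to point the same way.

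The paper repairs exactly this by choosing \emph{which} layering to modify, rather than an arbitrary one. Assuming some route of $\routes(\K)$ starting at $s_i$ lies weakly above $p$, let $q$ be the $\prec_{s_i}^+$-least such route and let $j$ be \emph{minimal} with $p^j_i=q$. Modify $\p^j$ (not an arbitrary $\q$) by setting $r_i=p$ and $r_b=p^j_b$ for $b\neq i$. For $a\geq j$ one gets $\p^a\succeq\r$ directly. For $a<j$, the minimality of $j$ forces $p^{j-1}_i\prec_{s_i}^+ p$, hence $\down_\K(\p^j)=i$ and $p^j_b=p^{j-1}_b$ for all $b\neq i$ by Lemma~\ref{lem:b}; this is what makes $\p^a\prec\r$ hold coordinatewise. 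A symmetric argument handles the case where all routes of $\routes(\K)$ from $s_i$ lie below $p$. Your outline (build a layering containing $p$, check compatibility with all of $\K$, invoke maximality) is the right skeleton, but without this careful choice of insertion point the compatibility check is false as stated.
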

\begin{proof}
	Write $\K=\{\p^1,\dots,\p^S\}$.
	Choose $i\in[m]$ such that $p$ is a horizontal $i$-route.
	We will construct a layering containing $p$ which is compatible with every layering of $\K$.

	First, suppose that there exists a route of $\routes(\K)$ which is greater than or equal to $p$ in $\prec_{s_i}^+$. Then choose $q\in\routes(\K)$ such that $q\succeq_{s_i}^+p$ and, for any $r\in\routes(\K)$ with $r\succeq_{s_i}^+ p$, we have $r\succeq_{s_i}^+ q$ (i.e., $q$ covers $p$ in the restriction of $\prec_{s_i}^+$ to $\routes(\K)\cup\{p\}$).

	Choose $j\in[S]$ to be minimal such that $q=p^j_i$. Define the layering $\r$ such that \[r_b=\begin{cases}p^j_b&b\neq i\\p&b=i.\end{cases}\] We claim that $\r$ is compatible with every layering of $\K$.
		It is immediate by construction that $\K\cup\{\r\}$ is route-compatible, so it remains to show the second condition of compatibility.

	Indeed, choose any index $a\in[S]$. If $a\geq j$, then for any $b\in[m]$ we have
	\[p^a_b\succeq_{s_b}^+ p^j_b\succeq_{s_b}^+ r_b,\]
	so $\p^a\succ\r$ and these two layerings are compatible. 

	Now, suppose $a<j$. In particular, this implies that $j>1$. Note that by minimality of $j$, we have $p^{j-1}_i\prec_{s_i}^+p\preceq_{s_i}^+p^j_i$, so that $\down_\K(\p^j)=i$ and hence $p^j_b=p^{j-1}_b$ for all $b\neq i$. Then for any index $b\in[m]$ other than $i$ we have
	\[p^a_b\preceq_{s_i}^+ p^{j-1}_b=p^j_b.\]
	Moreover, $p^a_i\preceq_{s_i}^+p^{j-1}_i\prec_{s_i}^+ p=r_i$, completing the proof that $\p^a\prec\r$.

	We have shown that $\r$ is compatible with every layering of $\K$, hence $\r\in\K$ by maximality of $\K$. Since $p\in\r$, this completes the proof when there is a route of $\routes(\K)$ weakly above $p$ in $\prec_{s_i}^+$.

	If there is no route of $\routes(\K)$ weakly above $p$ in $\prec_{s_i}^+$, then there must be a route below $p$ in $\prec_{s_i}^+$ and a symmetric argument shows the result in this case.
\end{proof}

We are now able to understand how to relate adjacent maximal layering-cliques outside of the shuffle case with the following proposition.

\begin{prop}\label{prop:god}
	Let $\K=\{\p^1,\dots,\p^S\}$ and $\L=\{\q^1,\dots,\q^S\}$ be adjacent maximal layering-cliques and choose $i,j\in[S]$ so that $\L=(\K\backslash\{\p^i\})\cup\{\q^j\}$. Suppose there exists a route $p\in\p^i$ which is in no other layering of $\K$. Then there is a route $q\in\q^j$ incompatible with $p$ which is in no other layering of $\L$, so that
	\[\routes(\L)=(\routes(\K)\backslash\{p\})\cup\{q\}.\]
	Moreover,
	\begin{enumerate}
		\item If $i\in\{2,\dots,S-1\}$, then $i=j$ and $\updown_\K(\p^i)=\updown_\L(\q^i)$.
		\item If $i=1$, then $j=S$ and $\updown_\K(\p^1)=1+\updown_\L(\q^S)$.
		\item If $i=S$, then $j=1$ and $\updown_\L(\q^1)=1+\updown_\K(\p^S)$.
	\end{enumerate}
\end{prop}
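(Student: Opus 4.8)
The plan is to establish the route statement first and then read off the index statement from it together with the combinatorics of $\prec_\K$ and $\prec_\L$.

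\medskip

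\textbf{Route statement.} First I would invoke Lemma~\ref{lem:updown}: since $p\in\p^i$ lies in no other layering of $\K$, its part~(3) forces $p=p^i_{\updown_\K(\p^i)}$ and shows $p$ is the \emph{unique} such route of $\p^i$, so $\routes(\K\backslash\{\p^i\})=\routes(\K)\backslash\{p\}$. Adjacency gives $\K\backslash\{\p^i\}=\L\backslash\{\q^j\}$, whence $\routes(\L)=(\routes(\K)\backslash\{p\})\cup\routes(\q^j)$. Next, $\q^j$ is incompatible with $\p^i$: otherwise $\q^j$ would be compatible with every layering of $\K$ and so lie in $\K$ by maximality, contradicting $\q^j\notin\K$. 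The crucial and least routine step is that $\q^j$ contributes a genuinely new route, i.e.\ $\routes(\q^j)\not\subseteq\routes(\K)\backslash\{p\}$. I would argue by contradiction: if $\routes(\q^j)\subseteq\routes(\K)\backslash\{p\}$, then $\q^j$ is route-compatible with every layering of $\K$, so the incompatibility of $\q^j$ and $\p^i$ must be of ``order type'', giving $a,b\in[m]$ with $p^i_a\prec_{s_a}^+q^j_a$ and $q^j_b\prec_{s_b}^+p^i_b$; but $\q^j$ is comparable in $\prec$ to the $\prec_\K$-neighbors of $\p^i$, which by Lemma~\ref{lem:b} differ from $\p^i$ only in component $\updown_\K(\p^i)$, and chasing these inequalities forces $q^j_k=p^i_k$ for all $k\neq\updown_\K(\p^i)$, while the two routes $p$ and $q^j_{\updown_\K(\p^i)}$ both lie in the clique $\routes(\K)$ and hence are comparable in $\prec_{s_{\updown_\K(\p^i)}}^+$ --- making $\q^j$ comparable to $\p^i$, a contradiction (the boundary cases $i\in\{1,S\}$ use the single available neighbor together with the strict comparison of $p$ against its route in the relevant component). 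Granting this, Lemma~\ref{lem:updown} applied to $(\L,\q^j)$ yields $q:=q^j_{\updown_\L(\q^j)}$ as the unique route of $\q^j$ in no other layering of $\L$, so $\routes(\L)=(\routes(\K)\backslash\{p\})\cup\{q\}$; and $q$ is incompatible with $p$, since otherwise $q$ --- being compatible with $\routes(\L)$ and hence with $\routes(\K)\backslash\{p\}$ --- would be compatible with all of $\routes(\K)$ and so lie in $\routes(\K)$ by Lemma~\ref{lem:hor-routes-maximal}, contradicting $q\notin\routes(\K)$.

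\medskip

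\textbf{Index statement.} Write $d=\updown_\K(\p^i)$ and $e=\updown_\L(\q^j)$. The chains $\K\backslash\{\p^i\}$ and $\L\backslash\{\q^j\}$ coincide, so I need to locate $\q^j$ within this common chain. The key planar input is: comparing $q$ with the $d$-routes $\{p^\ell_d\}$ of $\routes(\K)$, which are totally ordered with $p^{i-1}_d\prec_{s_d}^+p=p^i_d\prec_{s_d}^+p^{i+1}_d$ around position $i$, and using that $q$ is compatible with every $p^\ell_d$ except $p=p^i_d$ (with which it crosses), one sees that $q$ must be weakly above every $p^\ell_d$ with $\ell<i$ and weakly below every $p^\ell_d$ with $\ell>i$ --- any other possibility would make $q$ weakly comparable to $p$. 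Since all sources share an $x$-coordinate and all sinks share an $x$-coordinate, reading these weak inequalities at the source and at the sink constrains which $s_k$ and $t_k$ the route $q$ joins. For $i\in\{2,\dots,S-1\}$ this forces $e=d$ and then $\p^{i-1}\prec\q^j\prec\p^{i+1}$, so $j=i$; this is conclusion~(1). For $i=1$ the only surviving neighbor $\p^2$ lies above $\p^1$ and $p$ is the \emph{lowest} $d$-route of $\routes(\K)$; a closer analysis of the same kind --- now noting that, because $q$ must pass above $p$ somewhere, $q$ is weakly above every $k$-route of $\routes(\K)$ with $k<d$ --- shows that $q$ is a $(d-1)$-route which is the highest such route of $\routes(\L)$, that $\q^j$ is forced to the top of $\L$, and hence $j=S$ with $\updown_\K(\p^1)=1+\updown_\L(\q^S)$, which is conclusion~(2). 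Conclusion~(3) is the mirror argument, interchanging top and bottom (equivalently, the roles of $(\K,\p^i,p)$ and $(\L,\q^j,q)$).

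\medskip

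\textbf{Main obstacle.} I expect the index statement, and within it the two boundary cases $i\in\{1,S\}$, to be where the real work lies: the naive componentwise comparison of $\q^j$ with $\p^i$ is unavailable (they are route-incompatible), so one must argue directly with the planar positions of $p$, $q$, and the $d$-routes of $\routes(\K)$, and must carefully justify why $\p^i$ sitting at one end of $\prec_\K$ forces $\q^j$ to the \emph{opposite} end of $\prec_\L$ with $\updown$ shifted by one. The route statement, by comparison, should go through cleanly once Lemmas~\ref{lem:b}, \ref{lem:updown}, and~\ref{lem:hor-routes-maximal} are in hand.
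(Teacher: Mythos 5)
Your overall architecture matches the paper's: the route statement via Lemmas~\ref{lem:updown} and~\ref{lem:hor-routes-maximal}, and the index statement via planar weakly-above/weakly-below comparisons of $q$ with the routes of $\routes(\K)$. Your argument for the ``crucial step'' $\routes(\q^j)\not\subseteq\routes(\K)\backslash\{p\}$ is more roundabout than the paper's --- which simply observes that otherwise $\routes(\L)=\routes(\K)\backslash\{p\}$, so $p$ would be a horizontal route compatible with every route of $\routes(\L)$ yet absent from it, contradicting Lemma~\ref{lem:hor-routes-maximal} applied to $\L$ --- but your comparability-chasing via the $\prec_\K$-neighbors of $\p^i$ can be made to work. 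Case~(1) is essentially the paper's argument.

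The genuine gap is in cases~(2) and~(3). Write $u=\updown_\K(\p^1)$ and $v=\updown_\L(\q^j)$ (your $d$ and $e$). Your positional bookkeeping --- $q$ weakly below $p^2_u$, and $q$ weakly above every $k$-route of $\routes(\K)$ with $k<u$ because $q$ must pass above $p$ somewhere --- only pins $v\in\{u-1,u\}$. It does not rule out $v=u$, i.e.\ the possibility that $q$ is itself a horizontal $u$-route crossing $p$ and sitting at the bottom of $\L$ just as $p$ sits at the bottom of $\K$. No amount of weakly-above/weakly-below reasoning kills this configuration, since $p$ and $q$ genuinely cross. The paper excludes it with a separate construction: assuming $v=u$ (which forces $i=j=1$), take the \emph{earliest} incompatibility $\sigma$ of $p$ and $q$, write $p=p^-\sigma p^+$ and $q=q^-\sigma q^+$ with (say) $p\succ_{s_u}^+q$, and splice $r:=q^-\sigma p^+$. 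This $r$ is a horizontal $u$-route weakly below both $p$ and $q$, hence compatible with every route of $\routes(\K)$ (using that every route of $\routes(\K)$ is either weakly above $p$ or weakly below $p^S_{u-1}$), yet $r\prec_{s_u}^+p$ while $p$ is the $\prec_{s_u}^+$-lowest $u$-route of $\routes(\K)$, so $r\notin\routes(\K)$ --- contradicting Lemma~\ref{lem:hor-routes-maximal}. You correctly flag the boundary cases as where the real work lies, but your sketch asserts ``$q$ is a $(u-1)$-route'' on grounds that only yield $v\geq u-1$; the splicing construction (or an equivalent exploitation of the maximality of $\K$) is the missing idea.
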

\begin{proof}
	Note that the existence of distinct maximal layering-cliques necessitates $\dim\F_\G(1)\geq1$ by Lemma~\ref{lem:dim0} and hence $S\geq2$ by Remark~\ref{remk:geq1}.

	We first show that there is a route of $\q^j$ which is in no other layering of $\L$.
	Suppose to the contrary that every route of $\q^j$ is in a different layering of $\L$.
	Then every route of $\q^j$ is in a layering of $\K\cap\L=\K\backslash\{\p^i\}$. Then $\routes(\L)=\routes(\K)\backslash\{p\}$, contradicting Lemma~\ref{lem:hor-routes-maximal}. It follows that there exists a (necessarily unique) route $q:=q^j_{\updown_\L(\q^j)}$ of $\q^j$ which is in no other layering of $\L$.

	For notational brevity, we write $u:=\updown_\K(\p^i)$ and $v:=\updown_\L(\q^j)$ for the rest of this proof. Then we have $p=p^i_u$ and $q=q^j_v$.

	We now show that $q^j_v\neq p^i_u$. Suppose for contradiction that $q^j_v=p^i_u$ (in particular, this implies that $u=v$). We first show that in this case we must have $i=j$.
	For any $k\in[S]$ such that $k<i$, note that $\p^k\prec\p^i$ implies that $p^k_u\prec_{s_u}^+p^i_u$. The layering $\p^k$ is also in $\L$, and $p^k_u\prec_{s_u}^+ p^i_u=q^j_u$ implies that $\p^k\prec\q^j$. Since this holds for $k\in\{1,\dots,i-1\}$, this shows that $j\geq i$.
	Similarly, if $k>i$ then $\p^k\succ\p^i$ implies that $p^k_u\succ_{s_u}^+ p^i_u$. The layering $\p^k$ is also in $\L$, and $p^k_u\succ_{s_u}^+ p^i_u=q^j_u$ implies that $\p^k\succ\q^j$, showing that $j\leq i$ and hence $j=i$.
	Then the ordering $\prec_\L$ is
	\[\p^1\prec\p^2\prec\dots\prec\p^{i-1}\prec\q^j\prec\p^{i+1}\dots\prec\p^S.\]
	{We finally split into the case when $i<S$ and $i=S$. If $i<S$, then Lemma~\ref{lem:b} shows that $q^j_b=\begin{cases}p^{i+1}_b&b\neq u\\q^j_u&b=u\end{cases}=p^i_b$ and hence $\q^j=\p^i$. On the other hand, if $i=S$ then Lemma~\ref{lem:b} shows that $q^j_b=\begin{cases}p^{i-1}_b&b\neq u\\q^j_u&b=u\end{cases}=p^i_b$ and hence $\q^j=\p^i$. In either case, we have $\q^j=\p^i$, contradicting the assumption that $\K$ and $\L$ are distinct maximal layering-cliques.}
		We have shown that $q^j_v\neq p^i_u$. 
	It finally follows that $\routes(\L)=(\routes(\K)\backslash\{p^i_u\})\cup\{q^j_v\}$.

	It remains to show the final three parts of the proposition statement. 
	Since $q^j_v$ is not in $\routes(\K)$, Lemma~\ref{lem:hor-routes-maximal} shows that $q^j_v$ must be incompatible with a route of $\routes(\K)$. Since $p^i_u$ is the only route of $\routes(\K)\backslash\routes(\L)$, it must be the case that $q^j_v$ and $p^i_u$ are incompatible as routes.

	First, suppose that $i\in\{2,\dots,S-1\}$. We first show that $u=v$. Suppose to the contrary that $u<v$. The route $p^{i+1}_u$ is in both $\routes(\L)$ and $\routes(\K)$ and satisfies $p^{i+1}_u\succ_{s_i}^+p^i_u$, hence $p^{i+1}_u$ lies weakly above $p^i_u$ in the planar embedding of $\G$. The route $q^j_v$ starts at the source $s_v$ (above $s_u$) and is compatible with $p^{i+1}_u$, hence $q^j_v$ lies weakly above $p^{i+1}_u$ in the embedding of $\G$. This means that $q^j_v$ lies weakly above $p^{i}_u$ in the embedding of $\G$, contradicting the fact that $q^j_v$ and $p^i_u$ are incompatible. This shows that we cannot have $u<v$. Symmetrically, we cannot have $u>v$; it follows that $u=v$.

	Similarly, we now show that $i=j$. Suppose to the contrary that $i<j$. 
	Then $\p^a=\q^a$ for $a<i$ and $\p^{i+1}=\q^i$.
	Since $j>i$, we have $q^{j}_u\succ_{s_u}^+ q^i_u=p^{i+1}_u$, so $q^j_u$ lies weakly above $p^{i+1}_u$. As in the above paragraph, we have that $p^i_u$ lies weakly below $p^{i+1}_u$. Then $p^i_u$ lies weakly below $q^j_u$, contradicting that these two routes are incompatible.
	It follows that $i\geq j$. Symmetrically, we must have $i\leq j$ and hence $i=j$.
	This completes the proof of (1) covering the case when $i\in\{2,\dots,S-1\}$.

	We now prove (2). Suppose that $i=1$.
	We first show that $u\geq v$. Suppose to the contrary that $u<v$.
	The route $p^{2}_u$ is in both $\routes(\L)$ and $\routes(\K)$ and satisfies $p^{2}_u\succ_{s_1}^+p^1_u$, hence $p^{2}_u$ lies weakly above $p^1_u$ in the planar embedding of $\G$. The route $q^j_v$ starts at the source $s_v$ (above $s_u$) and is compatible with $p^{2}_u$, hence $q^j_v$ lies weakly above $p^{2}_u$ in the embedding of $\G$. This means that $q^j_v$ lies weakly above $p^{1}_u$ in the embedding of $\G$, contradicting the fact that $q^j_v$ and $p^1_u=p^i_u$ are incompatible. This shows that we cannot have $u<v$.

	Suppose now that $u=v$. 
	We show that we must have $1=i=j$ as in the $u\in\{2,\dots,S-1\}$ case.
	Suppose to the contrary that $j>1$.
	Then $\p^{2}=\q^1$.
	Since $j>1$, we have $q^{j}_u\succ_{s_u}^+ q^1_u=p^{2}_u$, so $q^j_u$ lies weakly above $p^{2}_u$. On the other hand, $p^1_u$ lies weakly below $p^{2}_u$. Then $p^1_u$ lies weakly below $q^j_u$, contradicting that these two routes are incompatible.
	It follows that if $u=v$, then $1=i=j$.

	Suppose without loss of generality that $p^1_u\succ_{s_u}^+q^1_u$ (if not, then we may switch $\K$ and $\L$ since $i=j=1$ and $u=v$ ensure that the setup is symmetric).
	Since $p^1_u$ and $q^1_u$ are incompatible, they must have an incompatibility; let $\sigma$ be the incompatibility between these routes which occurs as early as possible along $p^1_u$ and $q^1_u$. Then we may write
	\[p^1_u=p^-\sigma p^+ \text{ and }q^1_u=q^-\sigma q^+\]
	where, for example, the path $p^-$ is the subpath of $p^1_u$ occurring before $\sigma$.
	Since $p^1_u\succ_{s_u}^+q^1_u$ and $\sigma$ occurs as early as possible, it must be the case that $p^-$ enters $t(\sigma)$ above $q^-$, and $p^+$ leaves $h(\sigma)$ below $q^+$.
	Define the route $r:=q^-\sigma p^+$.
	Note immediately that $r$ is weakly below $p^1_u$ and weakly below $q^1_u$ in the planar embedding, so it is compatible with both of these routes.
	Moreover, the route $r$ is not in $\routes(\K)$, because $p^1_u\in\p^1$ means that $p^1_u$ is the $\prec_{s_u}^+$-lowest route of $\routes(\K)$ starting at $s_u$ while $r\prec_{s_u}^+p^1_u$.

	If $u=v=1$, then every route of $\routes(\K)$ is weakly above $p^1_u$ in the planar embedding of $\G$, hence is weakly above $r$. This implies that $r$ is compatible with every route of $\routes(\K)$. Then $r\in\routes(\K)$ by Lemma~\ref{lem:hor-routes-maximal}, a contradiction. On the other hand, suppose $u=v>1$. Consider the route $p^S_{u-1}$. This route is weakly below both $p^1_u$ and $q^1_u$ in the planar embedding, hence it is weakly below $r$. Moreover, every route $p'$ of $\routes(\K)$ is either weakly above $p^1_u$ or weakly below $p^S_{u-1}$; in either case, $p'$ is compatible with $r$. Again, this shows that $r$ is compatible with every route of $\routes(\K)$, hence $r\in\routes(\K)$ by Lemma~\ref{lem:hor-routes-maximal}, a contradiction. This completes our contradiction when $u=v$, proving that we must have $u>v$.

	If $u>v+1$ and/or $j<S$, then the route $q^j_v$ is weakly below $p^S_{u-1}$ and hence weakly below $p^1_u$, contradicting that $q^j_v$ and $p^1_u$ are incompatible. It finally follows that we must have $v=u-1$ and $j=S$, finishing the proof of (2).

	The proof of (3) is symmetric to the proof of (2).
\end{proof}

We now finally define rotations and shuffles.

\begin{defn}
	With the setup and notation of Proposition~\ref{prop:god}, if $i\in\{2,\dots,S-1\}$ and $p^i_{\updown_\K(\p^i)}\succ_{s_i}^+ q^i_{\updown_\K(\p^i)}$ then we say that $\L$ is a \emph{down-rotation of $\K$ at $\p^i$} and we write $\K\xrightarrow{\rotation}\L$. If $i\in\{2,\dots,S-1\}$ and $p^i_{\updown_\K(\p^i)}\prec_{s_i}^+ q^i_{\updown_\K(\p^i)}$ then we say that $\L$ is an \emph{up-rotation of $\K$ at $\p^i$} and we write $\K\xleftarrow{\rotation}\L$. In either case, we call $\L$ a \emph{rotation of $\K$ at $\p^i$}.
\end{defn}

\begin{defn}
	With the setup and notation of Proposition~\ref{prop:god}, if $i=S$ then we say that $\L$ is a \emph{down-realignment of $\K$ at $\p^i$} and we write $\K\xrightarrow{\realignment}\L$. If $i=1$ then we say that $\L$ is an \emph{up-realignment of $\K$ at $\p^i$} and we write $\K\xleftarrow{\realignment}\L$. In either case, we call $\L$ a \emph{realignment of $\K$ at $\p^i$}.
\end{defn}

It is an immediate consequence of Proposition~\ref{prop:god} that $\L$ is a down-rotation of $\K$ if and only if $\K$ is an up-rotation of $\L$, and that $\L$ is a down-realignment of $\K$ if and only if $\K$ is an up-realignment of $\L$.

In Figure~\ref{fig:three-Mutations}, the bottom-right layering-clique is the down-realignment of the top layering-clique at the orange layering. The bottom-middle layering-clique is the down-rotation of the top layering-clique at the green layering.

\begin{figure}
	\centering
	\def\svgscale{.38}
\begingroup%
  \makeatletter%
  \providecommand\color[2][]{%
    \errmessage{(Inkscape) Color is used for the text in Inkscape, but the package 'color.sty' is not loaded}%
    \renewcommand\color[2][]{}%
  }%
  \providecommand\transparent[1]{%
    \errmessage{(Inkscape) Transparency is used (non-zero) for the text in Inkscape, but the package 'transparent.sty' is not loaded}%
    \renewcommand\transparent[1]{}%
  }%
  \providecommand\rotatebox[2]{#2}%
  \newcommand*\fsize{\dimexpr\f@size pt\relax}%
  \newcommand*\lineheight[1]{\fontsize{\fsize}{#1\fsize}\selectfont}%
  \ifx\svgwidth\undefined%
    \setlength{\unitlength}{1044.47998047bp}%
    \ifx\svgscale\undefined%
      \relax%
    \else%
      \setlength{\unitlength}{\unitlength * \real{\svgscale}}%
    \fi%
  \else%
    \setlength{\unitlength}{\svgwidth}%
  \fi%
  \global\let\svgwidth\undefined%
  \global\let\svgscale\undefined%
  \makeatother%
  \begin{picture}(1,0.57108035)%
    \lineheight{1}%
    \setlength\tabcolsep{0pt}%
    \put(0,0){\includegraphics[width=\unitlength,page=1]{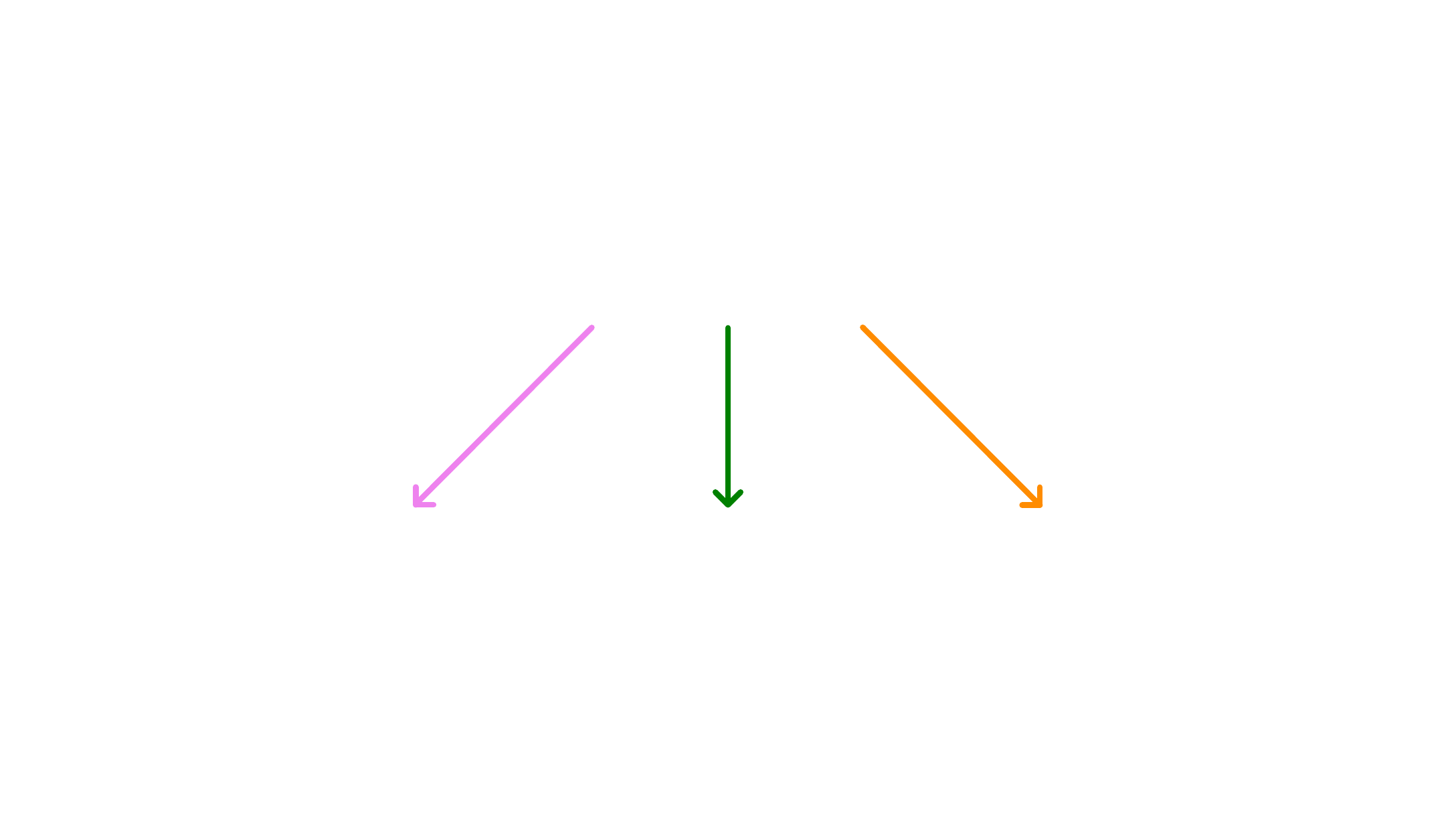}}%
    \put(0.31921483,0.28890548){\color[rgb]{0.93333333,0.50980392,0.93333333}\makebox(0,0)[lt]{\lineheight{1.25}\smash{\begin{tabular}[t]{l}$\sh$\end{tabular}}}}%
    \put(0.47284947,0.28890548){\color[rgb]{0,0.50196078,0}\makebox(0,0)[lt]{\lineheight{1.25}\smash{\begin{tabular}[t]{l}$\ro$\end{tabular}}}}%
    \put(0.67245042,0.28890548){\color[rgb]{1,0.54901961,0}\makebox(0,0)[lt]{\lineheight{1.25}\smash{\begin{tabular}[t]{l}$\re$\end{tabular}}}}%
    \put(0,0){\includegraphics[width=\unitlength,page=2]{im_threemutations.pdf}}%
  \end{picture}%
\endgroup%

	\caption{From the top maximal layering-clique is shown a down-shuffle, down-rotation, and down-realignment.}
	\label{fig:three-Mutations}
\end{figure}

\begin{thm}
	\label{thm:shuffle-rotation-realignment}
	Let $\K$ and $\L$ be adjacent maximal layering-cliques. Then $\L$ is a shuffle, rotation, or realignment of $\K$.
\end{thm}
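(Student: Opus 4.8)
The plan is to reduce the statement entirely to the two propositions already established, by a clean case split on whether the removed layering $\p^i$ of $\K$ has a ``private'' route. First I would set up notation: since $\K$ and $\L$ are adjacent maximal layering-cliques, Lemma~\ref{lem:dim0} and Remark~\ref{remk:geq1} give $\dim\F_1(\G)\geq1$ and $|\K|=|\L|=S$ with $S\geq2$; write $\K=\{\p^1\prec\dots\prec\p^S\}$ and $\L=\{\q^1\prec\dots\prec\q^S\}$, and let $i,j\in[S]$ be the indices with $\L=(\K\backslash\{\p^i\})\cup\{\q^j\}$.

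The dichotomy is on the hypothesis of Lemma~\ref{lem:updown}: either every route of $\p^i$ appears in another layering of $\K$, or some route of $\p^i$ appears in no other layering of $\K$. In the first case, Proposition~\ref{prop:shuffle} applies directly: it produces $i\in\{2,\dots,S-1\}$, $\up_\K(\p^i)\neq\down_\K(\p^i)$, and a layering $\r$ such that $(\K\backslash\{\p^i\})\cup\{\r\}$ is a maximal layering-clique incompatible only with $\p^i$. Since $\K$ and $\L$ are adjacent and agree everywhere except at $\p^i$ resp. $\q^j$, and since by Corollary~\ref{cor:triangulation-cardinality} a maximal layering-clique is determined as a top-dimensional simplex of the triangulation, the layering $\q^j$ replacing $\p^i$ must equal $\r$ — more carefully, the triangulation property (Theorem~\ref{thm:triangulation}, Corollary~\ref{cor:triangulation}) says that across the facet $\D_1(\K\cap\L)$ there is a unique simplex on the other side, namely $\D_1((\K\backslash\{\p^i\})\cup\{\r\})$, forcing $\L=(\K\backslash\{\p^i\})\cup\{\r\}$. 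Then $\L$ is a shuffle of $\K$ at $\p^i$ by definition, up- or down- according to whether $\up_\K(\p^i)>\down_\K(\p^i)$ or $<$. In the second case, Proposition~\ref{prop:god} applies verbatim with the exact same $i,j$: it yields a route $q\in\q^j$ incompatible with the private route $p\in\p^i$ and in no other layering of $\L$, together with the trichotomy on the position of $i$. If $i\in\{2,\dots,S-1\}$, then $i=j$ and $\updown_\K(\p^i)=\updown_\L(\q^i)$; comparing $p^i_{\updown_\K(\p^i)}$ and $q^i_{\updown_\K(\p^i)}$ in $\prec^+_{s_i}$ (these routes are incompatible, hence distinct, hence strictly comparable since $\routes(\L)$ is a clique so they'd be comparable were they compatible — in fact incompatibility already forces distinctness, and they both start at $s_i=s_j$ so one is strictly $\prec^+_{s_i}$-below the other... wait, incompatible routes need not be $\prec^+$-comparable, but they are distinct, and the definition of rotation only needs ``$\succ$ or $\prec$'' which may fail)—here I would simply invoke the definition of rotation, which reads off up/down from the sign of whatever relation holds; the cleanest statement is that $\L$ is a rotation of $\K$ at $\p^i$, and it is an up-rotation or down-rotation accordingly. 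If $i=1$ then $j=S$ and $\L$ is an up-realignment of $\K$ at $\p^1$; if $i=S$ then $j=1$ and $\L$ is a down-realignment of $\K$ at $\p^S$. This exhausts all cases, so $\L$ is a shuffle, rotation, or realignment of $\K$.

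The main obstacle I anticipate is the bookkeeping in the shuffle case: Proposition~\ref{prop:shuffle} constructs \emph{a} maximal layering-clique $(\K\backslash\{\p^i\})\cup\{\r\}$ adjacent to $\K$, but to conclude $\L$ equals it I need that adjacency determines the neighbor uniquely — i.e. that two distinct maximal layering-cliques cannot share the same codimension-one face $\K\cap\L$ with both lying on the same side. This is exactly the content of the triangulation being a genuine triangulation: each interior facet is shared by exactly two simplices, and each boundary facet by exactly one. Since $\D_1(\K)\neq\D_1(\L)$ share the facet $\D_1(\K\cap\L)$, and $\D_1((\K\backslash\{\p^i\})\cup\{\r\})$ also shares it and is distinct from $\D_1(\K)$, the exactly-two-simplices property forces $\D_1(\L)=\D_1((\K\backslash\{\p^i\})\cup\{\r\})$, and injectivity of $\K\mapsto\D_1(\K)$ (clear, since the vertices of $\D_1(\K)$ are the indicator vectors of the layerings of $\K$, which are distinct by injectivity of $\J$ in Lemma~\ref{prop:layerings_and_flows}) gives $\L=(\K\backslash\{\p^i\})\cup\{\r\}$. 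A parallel remark pins down the rotation/realignment case: Proposition~\ref{prop:god} already names $\q^j$ and its private route intrinsically, so there no uniqueness input is needed beyond what the proposition supplies. I would also note at the outset that the two cases of the dichotomy are mutually exclusive and exhaustive by construction, and that the up/down labels are consistent (an up-shuffle of $\L$ is a down-shuffle of $\K$, etc.) by the symmetry remarks following the respective definitions, so no ambiguity arises.
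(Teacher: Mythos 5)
Your proof is correct and follows essentially the same route as the paper's: the same dichotomy on whether $\p^i$ has a route appearing in no other layering of $\K$, resolved by Proposition~\ref{prop:shuffle} in one case and by Proposition~\ref{prop:god} together with the trichotomy on $i$ in the other. The uniqueness argument you supply in the shuffle case (an interior facet of a triangulation lies in exactly two maximal simplices, forcing $\q^j=\r$) is a detail the paper leaves implicit and is worth making explicit, while your worry about comparability in the rotation case is unfounded: $\prec_{s_u}^+$ is a total order on routes starting at $s_u$, so the two distinct (indeed incompatible) routes $p^i_{\updown_\K(\p^i)}$ and $q^i_{\updown_\K(\p^i)}$ are strictly comparable and the up/down label is well defined.
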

\begin{proof}
	Write $\K=\{\p^1,\dots,\p^S\}$ and $\L=\{\q^1,\dots,\q^S\}$. Since $\K$ and $\L$ are adjacent, there exist indices $i,j\in[S]$ such that $\L=(\K\backslash\{\p^i\})\cup\{\q^j\}$.
	If every route of $\p^i$ is in another layering of $\K$, then Proposition~\ref{prop:shuffle} gives that $\L$ is a shuffle of $\K$. Otherwise, if $i=1$ or $i=S$ then $\L$ is a realignment of $\K$, and if $i\in\{2,\dots,S-2\}$ then $\L$ is a rotation of $\K$.
\end{proof}

We refer to shuffle, rotation, and realignment moves collectively as \emph{mutations}. If $\L$ is a shuffle, rotation, or realignment of $\K$ at a layering $\p\in\K$, we say that $\L$ is a \emph{mutation of $\K$ at $\p$} and write $\L=\mu_\p(\K)$. If $\L$ is an down-shuffle, down-rotation, or down-realignment of $\K$ at a layering $\p\in\K$, we say that $\L$ is a \emph{down-mutation} of $\K$ at $\p$. We similarly refer to \emph{up-mutations} of $\K$.

\begin{remk}\label{remk:poly2}
	Recall from Remark~\ref{remk:poly1} that up to integral equivalence $\F_\G(1)$ is the restriction of $\F_{\hat\G}(m\times1)$ to the hyperplanes setting flow through all extension edges to 1. Dilating the (unimodular) framing triangulation of $\F_{\hat\G}(1)$ to a (non-unimodular) triangulation of $\F_{\hat\G}(m\times1)$ and restricting to $\F_\G(1)$ gives a subdivision $\mathcal S$ of
	$\F_\G(1)$ wherein two flows $F_1$ and $F_2$ of $\F_\G(1)$ are in the same maximal cell if and only if $\routes(F_1)\cup\routes(F_2)$ is a clique, where $\routes(F_1)$ is the set of routes used in the clique combination for $F_1$.
	Because $\routes(K)$ is a clique for any layering $K$ of $\G$, the framing triangulation of $\F_\G(1)$ refines the subdivision $\mathcal S$.
	In general this refinement is strict because if $\K\xrightarrow{\shuffle}\L$ is a shuffle move on maximal layering-cliques then $\routes(\K)=\routes(\L)$ -- i.e., maximal simplices of the framing triangulation related by shuffle moves are contained in the same maximal cell of $\mathcal S$.
	On the other hand, if $\K\to\L$ is a rotation or realignment then $\routes(\K)\cup\routes(\L)$ is not a clique and hence the simplices $\D_1(\K)$ and $\D_1(\L)$ are in different maximal cells of $\mathcal S$.
	Hence, the maximal cells of this subdivision $\mathcal S$ are obtained as unions of maximal simplices of the framing triangulation related by shuffle moves.
\end{remk}

\section{The Framing Poset}
\label{sec:poset}

We now define the framing poset, which is a partial order on maximal layering-cliques of a strongly planar balanced DAG $\G$ (and hence, simplices of its framing triangulation). 

\subsection{The post-source order on maximal layering-cliques}

We will define the framing poset as the transitive closure of down-mutations, but it is not \textit{a priori} clear that this definition is antisymmetric.
To show antisymmetry, we will use a simpler lexicographic order on maximal layering-cliques which we define in this section.

First, we need a total order on layerings:

\begin{defn}
	Let $\p$ and $\q$ be distinct layerings of $\G$. Let $i\in[m]$ be maximal such that $p_i\neq q_i$. If $p_i\prec_{s_i}^+ q_i$, then we say that $\p\prec_{s}^+\q$. Otherwise, $p_i\succ_{s_i}^+q_i$ and we say that $\p\succ_s^+\q$.
	We say that $\succ_s^+$ is the \emph{post-source order} on layerings of $\G$.
\end{defn}
The post-source order $\succ_s^+$ is a lexicographic order on layerings as ordered tuples of routes under $\prec_{s_j}^+$, so it gives a total order on layerings of $\G$.
\begin{defn}
	Let $\K=\{\p^1,\dots,\p^S\}$ and $\L=\{\q^1,\dots,\q^S\}$ be distinct maximal layering-cliques of $\G$. Let $j\in[S]$ be maximal such that $\p^j\neq\q^j$.
	If $\p^j\prec_s^+\q^j$, then we say that $\K\prec_s^+\L$; otherwise, $\p^j\succ_s^+\q^j$ and we say that $\K\succ_s^+\L$.
	We say that $\succ_s^+$ is the \emph{post-source order} on maximal layering-cliques of $\G$.
\end{defn}
The post-source order $\succ_s^+$ is a lexicographic order on maximal layering-cliques as ordered tuples of layerings under the post-source order $\prec_s^+$ on layerings, hence it gives a total order on maximal layering-cliques of $\G$.
We now show that up-mutations and down-mutations interact well with the post-source order on maximal layering-cliques.

\begin{prop}
	\label{prop:mutation-down}
	If a maximal layering-clique $\L$ is a down-mutation of $\K$, then $\L\prec_s^+\K$.
\end{prop}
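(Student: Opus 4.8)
The plan is to split by the type of down-mutation --- down-shuffle, down-rotation, or down-realignment --- and in each case identify the largest position $j$ at which the $\prec$-sorted layering sequences of $\K=\{\p^1\prec\dots\prec\p^S\}$ and $\L=\{\q^1\prec\dots\prec\q^S\}$ differ, and then show $\q^j\prec_s^+\p^j$. The only elementary ingredient needed beyond the structural results of Section~\ref{sec:mutations} is the following: if $\p\prec\q$ are distinct layerings, then $\p\prec_s^+\q$. Indeed, taking the largest coordinate $k$ with $p_k\neq q_k$, route-compatibility together with $\p\prec\q$ gives $p_k\preceq_{s_k}^+q_k$, hence $p_k\prec_{s_k}^+q_k$ since $\prec_{s_k}^+$ is a total order, and the claim follows from the definition of $\prec_s^+$ on layerings.

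Suppose first that $\L$ is a down-shuffle of $\K$ at $\p^i$, and write $u=\up_\K(\p^i)$ and $w=\down_\K(\p^i)$, so that $u<w$ by the definition of a down-shuffle. By Proposition~\ref{prop:shuffle} the sorted sequence of $\L$ is $(\p^1,\dots,\p^{i-1},\r,\p^{i+1},\dots,\p^S)$, where $\r$ agrees with $\p^i$ off $\{u,w\}$ with $r_u=p^{i+1}_u$ and $r_w=p^{i-1}_w$, and where $\r$ is incompatible with $\p^i$; in particular $\r\neq\p^i$, so $\K$ and $\L$ differ in exactly position $i$, i.e. $j=i$. The largest coordinate at which $\r$ and $\p^i$ differ is $w$ (since $u<w$), where $r_w=p^{i-1}_w$. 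Now $\p^{i-1}\prec\p^i$ are $\prec_\K$-adjacent, so by Lemma~\ref{lem:b} they differ only in coordinate $w$, and hence $p^{i-1}_w\prec_{s_w}^+p^i_w$. Therefore $\r\prec_s^+\p^i$, so $\L\prec_s^+\K$.

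Now suppose $\L$ is a down-rotation of $\K$ at $\p^i$, and write $u=\updown_\K(\p^i)$. By Proposition~\ref{prop:god}(1), $i\in\{2,\dots,S-1\}$, the new layering sits in position $i$ of $\L$ (so $\K$ and $\L$ differ only in position $i$, i.e. $j=i$), and $u=\updown_\L(\q^i)$, with $q^i_u$ and $p^i_u$ incompatible and in particular distinct. The crucial point is that $\p^i$ and $\q^i$ differ only in coordinate $u$: since $\up_\K(\p^i)=\down_\K(\p^i)=u$, Lemma~\ref{lem:b} gives $p^i_k=p^{i-1}_k$ for all $k\neq u$; since $\p^{i-1}$ is $\prec_\L$-covered by $\q^i$ and $\down_\L(\q^i)=u$, Lemma~\ref{lem:b} gives $q^i_k=p^{i-1}_k$ for all $k\neq u$; combining, $q^i_k=p^i_k$ for all $k\neq u$. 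So the largest coordinate at which $\q^i$ and $\p^i$ differ is $u$, and the defining inequality of a down-rotation is precisely $q^i_u\prec_{s_u}^+p^i_u$. Hence $\q^i\prec_s^+\p^i$, so $\L\prec_s^+\K$.

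Finally, suppose $\L$ is a down-realignment of $\K$ at $\p^i$, so $i=S$ by definition. By Proposition~\ref{prop:god}(3) the new layering sits in position $1$ of $\L$, so the sorted sequence of $\L$ is $(\q^1,\p^1,\p^2,\dots,\p^{S-1})$, which we compare with $(\p^1,\dots,\p^{S-1},\p^S)$ position by position. These sequences disagree in position $S$ (since $\p^{S-1}\neq\p^S$), so $j=S$, with $\K$ carrying $\p^S$ and $\L$ carrying $\p^{S-1}$ there; since $\p^{S-1}\prec\p^S$, the elementary ingredient gives $\p^{S-1}\prec_s^+\p^S$, so $\L\prec_s^+\K$. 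The one place demanding real care is the down-rotation case: establishing that $\p^i$ and $\q^i$ differ in a single coordinate requires combining Lemma~\ref{lem:b}, Lemma~\ref{lem:updown}, and Proposition~\ref{prop:god}(1). The down-shuffle case uses the same idea but $\r$ is handed to us explicitly by Proposition~\ref{prop:shuffle}, while in the down-realignment case the subtlety is instead that the two sorted sequences are shifted relative to each other, so one must read off position $S$ rather than a single differing coordinate.
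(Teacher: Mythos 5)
Your proof is correct and follows essentially the same route as the paper's: a case split by mutation type, identifying the top position at which the sorted sequences differ and the top coordinate at which the two layerings there differ, then reading off the strict inequality. The only differences are presentational — you derive the ``unique differing coordinate'' claim in the rotation case from Lemma~\ref{lem:b} rather than quoting it from Proposition~\ref{prop:god}, and you make explicit the fact that $\p\prec\q$ implies $\p\prec_s^+\q$, which the paper uses implicitly in the realignment case.
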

\begin{proof}
	Write $\K=\{\p^1,\dots,\p^S\}$ and $\L=\{\q^1,\dots,\q^S\}$. Since $\L$ is a down-mutation of $\K$, these layering-cliques are adjacent and hence $(\K\backslash\{\p^i\})\cup\{\q^j\}=\L$ for some $i,j\in[m]$.

	Suppose first that $\L$ is a down-shuffle of $\K$. Then
	by Proposition~\ref{prop:shuffle}, we have $i\in\{2,\dots,S-1\}$ and $\q^j=(\p^i\backslash\{p^i_{\up_\K(\p^i)},p^i_{\down_\K(\p^i)}\})\cup\{p^{i+1}_{\up_\K(\p^i)},p^{i-1}_{\down_\K(\p^i)}\}$ and
	\[\p^1\prec\dots\prec\p^{i-1}\prec\q^j\prec\p^{i+1}\prec\dots\prec\p^S\]
	is the layering-clique $\L$ ordered by $\prec$, showing that $j=i$.
	It follows that $i$ is the maximal (indeed, the only) index in $[S]$ such that $\p^i\neq\q^i$. Since $\L$ is a down-shuffle of $\K$, we have $\up_\K(\p^i)<\down_\K(\p^i)$ so that the index $u:=\down_\K(\p^i)$ is the largest index in $[m]$ such that $p^i_u$ and $q^i_u$ differ; since $p^i_u\succ_{s_u}^+p^{i-1}_u=q^{i}_u$ we then have $\p^i\succ_s^+\q^i$, hence $\K\succ_s^+\L$, proving the proposition when $\L$ is a down-shuffle of $\K$.

	Suppose now that $\L$ is a down-rotation of $\K$. By Proposition~\ref{prop:god}, we have $i=j$, so that $i$ is the unique index such that the layerings $\p^i$ and $\q^i$ differ. Also by Proposition~\ref{prop:god}, the index $\updown_\K(\p^i)=\updown_\L(\q^i)=:u$ is the unique index such that the routes $p^i_u$ and $q^i_u$ differ. Since $p^i_u\succ_{s_i}^+q^i_u$, we have $\p^i\succ\q^j$ and hence $\K\succ_s^+\L$, proving the proposition when $\L$ is a down-rotation of $\K$.

	Suppose finally that $\L$ is a down-realignment of $\K$. Then $i=S$ and $j=1$. In particular, this means that $\p^S\succ_{s}^+\p^{S-1}=\q^S$, hence $\K\succ_{s}^+\L$. This ends the proof.
\end{proof}

\subsection{The framing poset on maximal layering-cliques}

We are now able to define the framing poset on maximal layering-cliques.

\begin{defn}
	If a maximal layering-clique $\L$ is a down-mutation of a maximal layering-clique $\K$, then we write that $\L\prec\K$. Let $\prec$ be the transitive closure of these relations.
\end{defn}

In other words, we write $\L\preceq\K$ if there is a (possibly empty) series of down-mutations starting at $\K$ and ending at $\L$.

\begin{prop}\label{prop:partial-order}
	The relation $\preceq$ is a partial order on the set of maximal layering-cliques of $\G$.
\end{prop}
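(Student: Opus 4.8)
The plan is to check the three axioms of a partial order in turn. Reflexivity is immediate: the empty sequence of down-mutations witnesses $\K\preceq\K$ for every maximal layering-clique $\K$. Transitivity is also immediate, since $\preceq$ was defined to be the transitive closure of the down-mutation relation (equivalently, concatenating a chain of down-mutations from $\K$ to $\L$ with one from $\L$ to $\mathcal M$ gives a chain from $\K$ to $\mathcal M$). So the only substantive point is antisymmetry, and I would derive it from the auxiliary total order $\prec_s^+$ on maximal layering-cliques.

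The key observation is that $\L\preceq\K$ forces either $\L=\K$ or $\L\prec_s^+\K$. Indeed, by definition there is a chain $\K=\K_0,\K_1,\dots,\K_r=\L$ of maximal layering-cliques in which each $\K_{t+1}$ is a down-mutation of $\K_t$. If $r=0$, then $\L=\K$. If $r\geq1$, then Proposition~\ref{prop:mutation-down} gives $\K_{t+1}\prec_s^+\K_t$ for every $t$, and since $\prec_s^+$ is a strict total order on maximal layering-cliques (in particular transitive), chaining these inequalities yields $\L=\K_r\prec_s^+\K_0=\K$.

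Antisymmetry now follows at once: if $\L\preceq\K$ and $\K\preceq\L$ with $\L\neq\K$, then the previous paragraph gives both $\L\prec_s^+\K$ and $\K\prec_s^+\L$, contradicting the irreflexivity (equivalently, antisymmetry) of the total order $\prec_s^+$. Hence $\L=\K$, and $\preceq$ is antisymmetric. Combining this with reflexivity and transitivity shows that $\preceq$ is a partial order, which completes the proof.

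I do not expect any real obstacle here: the genuine work was already done in establishing Proposition~\ref{prop:mutation-down}, which was engineered precisely so that every down-mutation strictly decreases the lexicographic invariant $\prec_s^+$, thereby precluding any nontrivial directed cycle of down-mutations. No further case analysis of shuffles, rotations, or realignments is needed at this stage.
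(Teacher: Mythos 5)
Your proposal is correct and follows essentially the same route as the paper: reflexivity and transitivity are immediate from the definition as a transitive closure, and antisymmetry is deduced by chaining Proposition~\ref{prop:mutation-down} along a sequence of down-mutations to get a strict $\prec_s^+$-descent, contradicting totality of $\prec_s^+$. The only cosmetic difference is that the paper first disposes of the trivial case $\dim\F_1(\G)=0$ separately, which your argument handles implicitly since no down-mutations exist there.
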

\begin{proof}
	If $\dim\F_\G(1)=0$, then Lemma~\ref{lem:dim0} gives that there is only one maximal layering-clique of $\G$ and hence the result is trivial. Now suppose $\dim\F_\G(1)\geq1$.

	It is immediate by definition that $\preceq$ is reflexive and transitive. It remains to show that $\preceq$ is antisymmetric.
	Suppose to the contrary that there exist distinct maximal layering-cliques $\K$ and $\L$ such that $\K\preceq\L$ and $\L\preceq\K$.
	Since $\L\preceq\K$, there must exist a sequence of down-mutations
	\[\K\to\K_1\to\dots\to\K_S=\L.\]
	Then Proposition~\ref{prop:mutation-down} shows that $\L\preceq_s^+\K$. One may show symmetrically that $\K\preceq_s^+\L$, contradicting the fact that $\preceq_s^+$ is a total order.
	It follows that $\preceq$ is antisymmetric, and hence that $\preceq$ is a poset.
\end{proof}

We call $\preceq$ the \emph{framing poset} of $\G$.
See Figure~\ref{fig:small_example}, where the maximal layering-cliques are arranged into the Hasse diagram of the framing poset on the right.

\subsection{Relations to one-source-one-sink framing triangulations}

Recall the definition of a planar-framed DAG in Section~\ref{ssec:planar-framing}.
In particular, if a strongly planar balanced DAG $\G$ has one source and one sink, then the embedding of $\G$ induces a planar framing $\R_\P$ of $\G$.
In this case, a layering is merely a route and our compatibility of layerings amounts to compatibility of routes as defined in~\cite{DKK}, so our framing triangulations induced by the embedding $\G$ are the same as the framing triangulations induced by the planar framing $\R_\P$.
Moreover, shuffles and realignments are impossible in this case and the definition of our framing poset agrees with that given in~\cite{vBC}, hence we recover the framing triangulations and framing lattices of planar-framed DAGs with one source and one sink. This means that the framing triangulation and framing lattice for the unit flow polytope of any planar-framed DAG with one source and one sink as defined in the literature are the same as its framing triangulation and framing poset in our sense -- for example, see the DAG of Figure~\ref{fig:square}.

\begin{remk}
	We remark that~\cite{vBC} takes the convention when drawing their framed DAGs that, for example, if $\alpha$ and $\beta$ are two edges incoming to the same vertex $v$ and $\alpha$ is drawn above $\beta$ in the embedding, then $\alpha<_{\R,v}\beta$. This is opposite to our convention. In particular, this means that their ``clockwise rotations'' will look like counter-clockwise rotations in our embeddings, and vice versa.
\end{remk}

\subsection{Examples}

We now give three more examples of framing posets of strongly planar balanced DAGs outside of the unit one-source-one-sink case.

\begin{example}
	Let $\G$ be the strongly planar balanced DAG of Figure~\ref{fig:zigzag}. Shown on the right are its four maximal layering-cliques arranged into the framing poset of $\G$. In particular, note that the framing poset is not a lattice because it has no unique greatest element and no unique least element.
	\begin{figure}
		\centering
		\def\svgscale{.28}
\begingroup%
  \makeatletter%
  \providecommand\color[2][]{%
    \errmessage{(Inkscape) Color is used for the text in Inkscape, but the package 'color.sty' is not loaded}%
    \renewcommand\color[2][]{}%
  }%
  \providecommand\transparent[1]{%
    \errmessage{(Inkscape) Transparency is used (non-zero) for the text in Inkscape, but the package 'transparent.sty' is not loaded}%
    \renewcommand\transparent[1]{}%
  }%
  \providecommand\rotatebox[2]{#2}%
  \newcommand*\fsize{\dimexpr\f@size pt\relax}%
  \newcommand*\lineheight[1]{\fontsize{\fsize}{#1\fsize}\selectfont}%
  \ifx\svgwidth\undefined%
    \setlength{\unitlength}{1587.35913086bp}%
    \ifx\svgscale\undefined%
      \relax%
    \else%
      \setlength{\unitlength}{\unitlength * \real{\svgscale}}%
    \fi%
  \else%
    \setlength{\unitlength}{\svgwidth}%
  \fi%
  \global\let\svgwidth\undefined%
  \global\let\svgscale\undefined%
  \makeatother%
  \begin{picture}(1,0.32452706)%
    \lineheight{1}%
    \setlength\tabcolsep{0pt}%
    \put(0,0){\includegraphics[width=\unitlength,page=1]{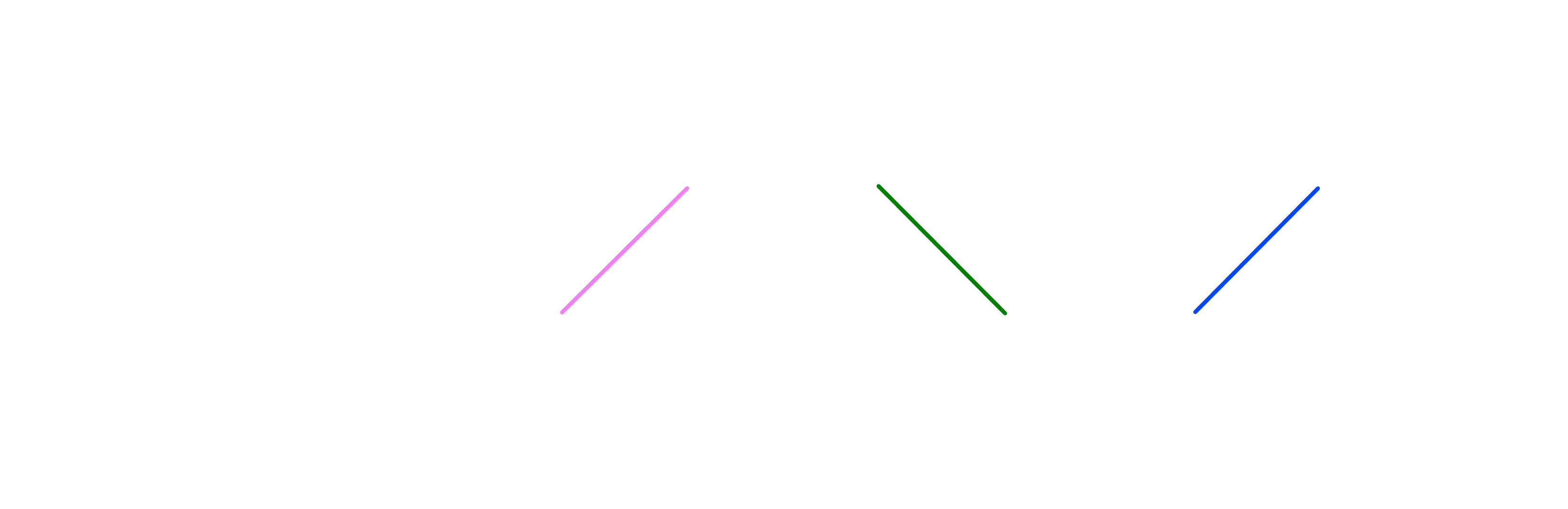}}%
    \put(0.77042564,0.16075442){\color[rgb]{0.00784314,0.27843137,0.99607843}\makebox(0,0)[lt]{\lineheight{1.25}\smash{\begin{tabular}[t]{l}$\sh$\end{tabular}}}}%
    \put(0.61508565,0.1617271){\color[rgb]{0,0.50196078,0}\makebox(0,0)[lt]{\lineheight{1.25}\smash{\begin{tabular}[t]{l}$\re$\end{tabular}}}}%
    \put(0,0){\includegraphics[width=\unitlength,page=2]{im_zigzag.pdf}}%
    \put(0.36724028,0.16075442){\color[rgb]{0.93333333,0.50980392,0.93333333}\makebox(0,0)[lt]{\lineheight{1.25}\smash{\begin{tabular}[t]{l}$\sh$\end{tabular}}}}%
    \put(0,0){\includegraphics[width=\unitlength,page=3]{im_zigzag.pdf}}%
  \end{picture}%
\endgroup%

		\caption{A DAG whose framing poset is not a lattice.}
		\label{fig:zigzag}
	\end{figure}
\end{example}

\begin{example}\label{ex:nonunit}
	On the top-left of Figure~\ref{fig:blowup} is a strongly planar DAG $\G$ with netflow vector $\a=(2,-2)$. Note that this flow polytope is not unit, so we do not consider layerings on $(\G,\a)$ directly. Rather, we decontract to the strongly planar balanced DAG $\G'$ shown on the top-right (recall Proposition~\ref{prop:reduce_to_balanced}) and observe that $\F_\G(\a)\ucong\F_{G'}(1)$. Shown finally are the two maximal layering-cliques of $\G'$; these index the triangulation of the line segment $\F_{G'}(1)$ into two unimodular one-dimensional simplices, shown on the bottom-left of the figure.
	This in turn triangulates $\F_\G(\a)$; in this way, we can triangulate even non-unit flow polytopes of strongly planar pairs $(\G,\a)$ using techniques of this paper.
\begin{figure}
	\centering
	\def\svgscale{.28}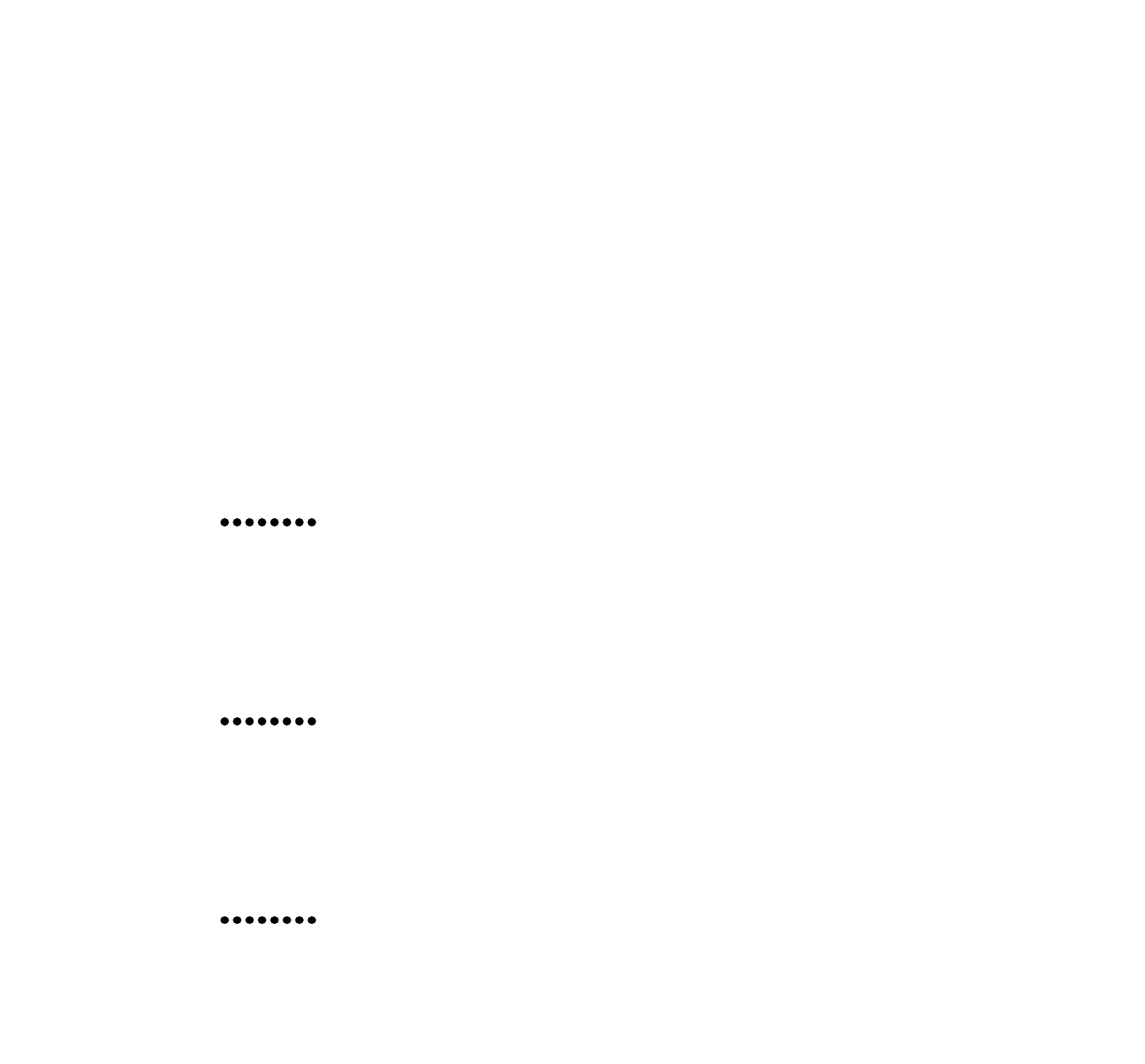
	\caption{A DAG of a non-unit flow polytope, its reduction to a unit flow polytope, and the framing poset.}
	\label{fig:blowup}
\end{figure}
	We finally remark that, more generally, the DAG $\G$ of Figure~\ref{fig:blowup} with netflow $(n,-n)$ for any $n\in\mathbb N_{\geq0}$ gives a flow polytope which is (integrally equivalent to) a line segment of length $n$ triangulated into unit segments.
\end{example}

\begin{example}
	Consider the strongly planar balanced DAG $\G$ of Figure~\ref{fig:shuffles}, which consists of three connected components. Its framing poset is the hexagon shown on the right of the figure, all of whose edges are shuffles. Note that the flow polytope $\F_\G(1)$ is the Minkowski sum of the unit flow polytopes of the connected components of $\G$, but the choice of planar embedding of $\G$ induces an ordering of its connected components from bottom to top, which is used to construct the framing triangulation.
\begin{figure}
	\centering
	\def\svgscale{.19}
\begingroup%
  \makeatletter%
  \providecommand\color[2][]{%
    \errmessage{(Inkscape) Color is used for the text in Inkscape, but the package 'color.sty' is not loaded}%
    \renewcommand\color[2][]{}%
  }%
  \providecommand\transparent[1]{%
    \errmessage{(Inkscape) Transparency is used (non-zero) for the text in Inkscape, but the package 'transparent.sty' is not loaded}%
    \renewcommand\transparent[1]{}%
  }%
  \providecommand\rotatebox[2]{#2}%
  \newcommand*\fsize{\dimexpr\f@size pt\relax}%
  \newcommand*\lineheight[1]{\fontsize{\fsize}{#1\fsize}\selectfont}%
  \ifx\svgwidth\undefined%
    \setlength{\unitlength}{1293.9329834bp}%
    \ifx\svgscale\undefined%
      \relax%
    \else%
      \setlength{\unitlength}{\unitlength * \real{\svgscale}}%
    \fi%
  \else%
    \setlength{\unitlength}{\svgwidth}%
  \fi%
  \global\let\svgwidth\undefined%
  \global\let\svgscale\undefined%
  \makeatother%
  \begin{picture}(1,1.04425118)%
    \lineheight{1}%
    \setlength\tabcolsep{0pt}%
    \put(0,0){\includegraphics[width=\unitlength,page=1]{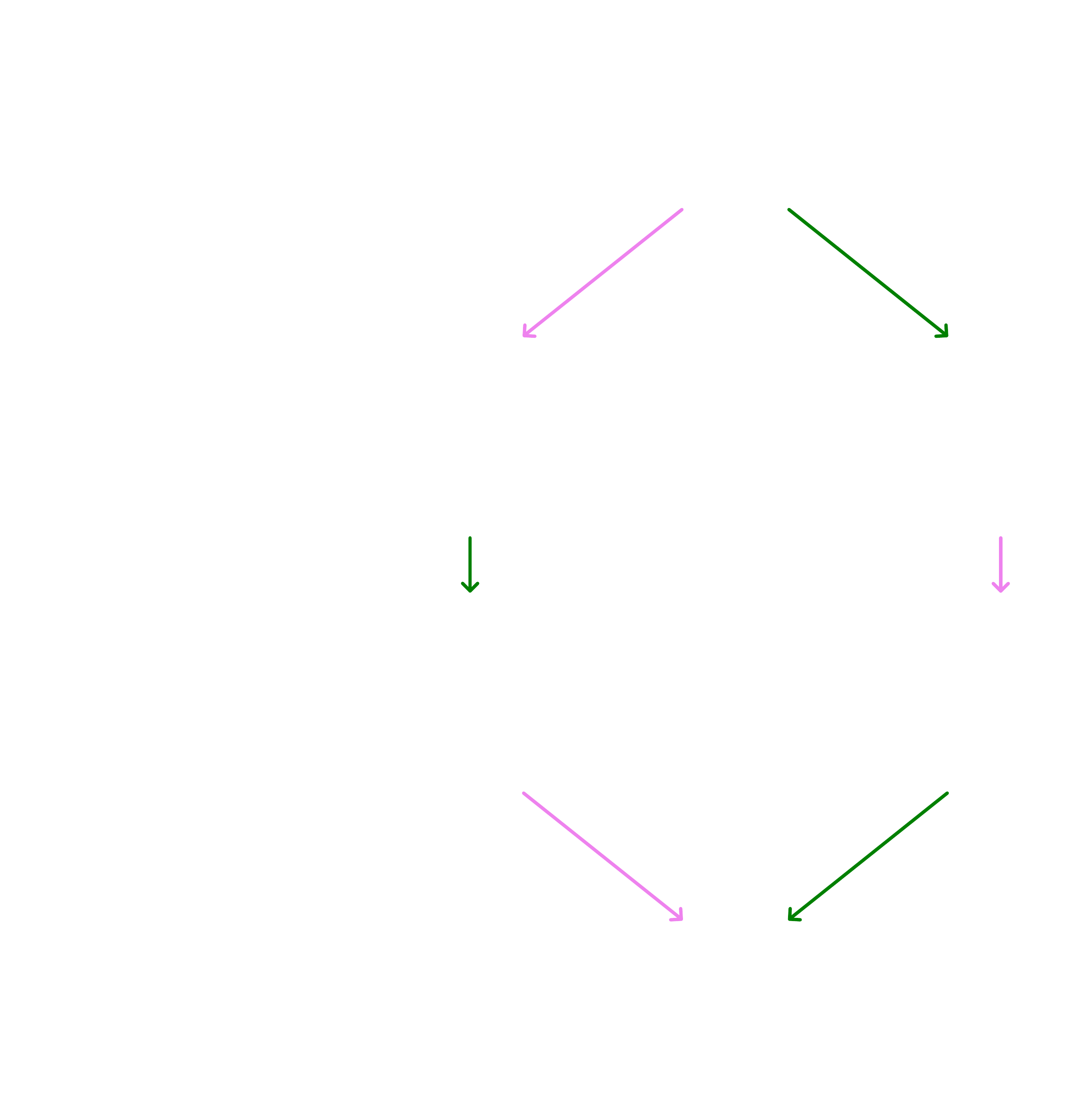}}%
    \put(0.88577988,0.50422244){\color[rgb]{0.93333333,0.50980392,0.93333333}\makebox(0,0)[lt]{\lineheight{1.25}\smash{\begin{tabular}[t]{l}$\sh$\end{tabular}}}}%
    \put(0.39116369,0.50422321){\color[rgb]{0,0.50196078,0}\makebox(0,0)[lt]{\lineheight{1.25}\smash{\begin{tabular}[t]{l}$\sh$\end{tabular}}}}%
    \put(0.8323554,0.79274974){\color[rgb]{0,0.50196078,0}\makebox(0,0)[lt]{\lineheight{1.25}\smash{\begin{tabular}[t]{l}$\sh$\end{tabular}}}}%
    \put(0.52306132,0.79274974){\color[rgb]{0.93333333,0.50980392,0.93333333}\makebox(0,0)[lt]{\lineheight{1.25}\smash{\begin{tabular}[t]{l}$\sh$\end{tabular}}}}%
    \put(0.5848697,0.24867211){\color[rgb]{0.93333333,0.50980392,0.93333333}\makebox(0,0)[lt]{\lineheight{1.25}\smash{\begin{tabular}[t]{l}$\sh$\end{tabular}}}}%
    \put(0.77036934,0.24867211){\color[rgb]{0,0.50196078,0}\makebox(0,0)[lt]{\lineheight{1.25}\smash{\begin{tabular}[t]{l}$\sh$\end{tabular}}}}%
    \put(0,0){\includegraphics[width=\unitlength,page=2]{im_shuffles.pdf}}%
  \end{picture}%
\endgroup%

	\caption{The framing poset of the unit flow polytope of a non-connected strongly planar balanced DAG.}
	\label{fig:shuffles}
\end{figure}

	Note that this framing poset is isomorphic to the weak order on $\mathfrak{S}_3$. 
	In fact, adding more connected components to the graph $\G$ gives larger weak orders through the framing poset.
	More generally, for $n\in\mathbb N_{\geq1}$ let $\Delta_n$ be the strongly planar DAG consisting of one source, one sink, and $n$ edges from source to sink. For a sequence of positive integers $\mathbf{s}=(s_1,\dots,s_m)$, let $\G_{(n_1,\dots,n_m)}$ be the strongly planar DAG (so that $\G$ of Figure~\ref{fig:shuffles} is $\G_{(2,2,2)}$).
	Then the framing poset of $\G_{\mathbf{s}}$ is the lattice of multipermutations $\mathcal{M}_{\mathbf{s}}$ as in~\cite[\S2.3]{vBC}.
\end{example}

\section{Future Directions and Obstacles}
\label{sec:obs}

We begin this section by presenting an example of a reasonably small balanced DAG whose unit flow polytope admits no triangulation induced by a pairwise compatibility condition on its integer points.
Finally, we finish with some conjectures and future directions.

\subsection{An inconvenient example}

It is natural to wonder if results about framing triangulations and framing posets may be extended beyond the strongly planar setting of this paper and the one-sink settings of~\cite{vBC,DHY}.
These existing theories of framing triangulations all work by defining a pairwise compatibility condition on the integer points of a flow polytope and defining the simplices of a triangulation to be the maximal collections of pairwise compatible points; Example~\ref{ex:k33} gives a balanced DAG whose unit flow polytope cannot be triangulated in this way.
This indicates that, while it may be possible to find a common generalization of framing triangulation theory of this paper and~\cite{vBC,DHY} using pairwise compatibility on integer points, it will be necessary to move away from pairwise compatibility conditions to get any sort of framing triangulation theory for arbitrary flow polytopes.

\begin{example}\label{ex:k33}
	Let $\G$ be the complete bipartite graph $\mathcal K_{3,3}$ shown in Figure~\ref{fig:no_triangulation} with edges oriented left to right.
	Label its source vertices $s_1,s_2,s_3$ and label its sink vertices $t_1,t_2,t_3$. For $i,j\in[3]$, write $\gamma_{i,j}$ to refer to the edge of $\G$ from $s_i$ to $t_j$.
	\begin{figure}
		\centering
		\def\svgscale{.38}
\begingroup%
  \makeatletter%
  \providecommand\color[2][]{%
    \errmessage{(Inkscape) Color is used for the text in Inkscape, but the package 'color.sty' is not loaded}%
    \renewcommand\color[2][]{}%
  }%
  \providecommand\transparent[1]{%
    \errmessage{(Inkscape) Transparency is used (non-zero) for the text in Inkscape, but the package 'transparent.sty' is not loaded}%
    \renewcommand\transparent[1]{}%
  }%
  \providecommand\rotatebox[2]{#2}%
  \newcommand*\fsize{\dimexpr\f@size pt\relax}%
  \newcommand*\lineheight[1]{\fontsize{\fsize}{#1\fsize}\selectfont}%
  \ifx\svgwidth\undefined%
    \setlength{\unitlength}{246.26899719bp}%
    \ifx\svgscale\undefined%
      \relax%
    \else%
      \setlength{\unitlength}{\unitlength * \real{\svgscale}}%
    \fi%
  \else%
    \setlength{\unitlength}{\svgwidth}%
  \fi%
  \global\let\svgwidth\undefined%
  \global\let\svgscale\undefined%
  \makeatother%
  \begin{picture}(1,0.64477057)%
    \lineheight{1}%
    \setlength\tabcolsep{0pt}%
    \put(0,0){\includegraphics[width=\unitlength,page=1]{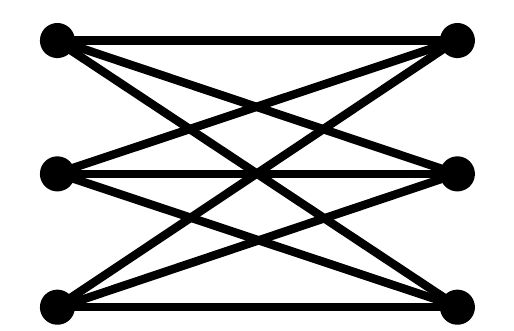}}%
    \put(0.00980343,0.11520737){\color[rgb]{0,0,0}\makebox(0,0)[lt]{\lineheight{1.25}\smash{\begin{tabular}[t]{l}$s_1$\end{tabular}}}}%
    \put(0.00980306,0.37508986){\color[rgb]{0,0,0}\makebox(0,0)[lt]{\lineheight{1.25}\smash{\begin{tabular}[t]{l}$s_2$\end{tabular}}}}%
    \put(0.00980306,0.63496828){\color[rgb]{0,0,0}\makebox(0,0)[lt]{\lineheight{1.25}\smash{\begin{tabular}[t]{l}$s_3$\end{tabular}}}}%
    \put(0.87952308,0.63496422){\color[rgb]{0,0,0}\makebox(0,0)[lt]{\lineheight{1.25}\smash{\begin{tabular}[t]{l}$t_3$\end{tabular}}}}%
    \put(0.87952302,0.37508986){\color[rgb]{0,0,0}\makebox(0,0)[lt]{\lineheight{1.25}\smash{\begin{tabular}[t]{l}$t_2$\end{tabular}}}}%
    \put(0.87952302,0.11521143){\color[rgb]{0,0,0}\makebox(0,0)[lt]{\lineheight{1.25}\smash{\begin{tabular}[t]{l}$t_1$\end{tabular}}}}%
  \end{picture}%
\endgroup%

		\caption{A non-planar balanced DAG whose unit flow polytope admits no lattice triangulation.}
		\label{fig:no_triangulation}
	\end{figure}
	Perfect matchings of $\G$ are of the form $A=\{\gamma_{1,a(1)},\gamma_{2,a(2)},\gamma_{3,a(3)}\}$, where $a$ is a permutation of $[3]$. 
	For any perfect matching $A$ of $\G$, there is a unit flow $\J(A)$ of $\G$ labelling the three edges of $A$ with 1 and all other edges of $\G$ with 0.
	Since all edges of $\G$ are incident to a source and sink, no unit flow may label an edge with a number greater than 1. It follows that the lattice points of $\F_\G(1)$ are precisely those flows of the form $\J(A)$ for a perfect matching $A$.
	There are six permutations of $[3]$, hence six perfect matchings of $\G$ and six lattice points of $\F_\G(1)$.

	We will show that it is impossible for a pairwise compatibility condition on the integer points of $\F_\G(1)$ to induce a triangulation of $\F_\G(1)$.
	Suppose for contradiction that there does exist such a pairwise compatibility condition indexing a triangulation $\mathcal T$ of $\F_\G(1)$.
	Say that a \emph{clique} is a collection of pairwise compatible perfect matchings, so that the cliques correspond to the simplices of $\mathcal T$.

	Suppose $A=\{\gamma_{1,a(1)},\gamma_{2,a(2)},\gamma_{3,a(3)}\}$ and $B=\{\gamma_{1,b(1)},\gamma_{2,b(2)},\gamma_{3,b(3)}\}$ are perfect matchings of $\G$ (so that $a$ and $b$ are permutations of $[3]$). We claim that if $C$ is a perfect matching using only edges of $A\cup B$, then $C=A$ or $C=B$.

	The claim is immediate if $A=B$, so suppose $A$ and $B$ are distinct. By symmetry, it suffices to consider the following cases:
	\begin{enumerate}
		\item Suppose $A\cup B$ consists of 6 distinct edges. Take a perfect matching $C$ using only edges of $A\cup B$. Then $C$ must contain either $\gamma_{1,a(1)}$ or $\gamma_{1,b(1)}$; by switching $A$ and $B$ if necessary, suppose without loss of generality that $C$ contains $\gamma_{1,a(1)}$.
			Then $C$ cannot contain the edge $\gamma_{1,b(1)}$ because $C$ already contains an edge starting at $s_1$. Since $C$ must contain an edge ending at $b(1)$, it must be that $C$ contains $\gamma_{a^{-1}(b(1)),b(1)}$ (this is distinct from the edge $\gamma_{1,a(1)}$ because $a(1)\neq b(1)$ by hypothesis). The only perfect matching containing both $\gamma_{1,a(1)}$ and $\gamma_{a^{-1}(b(1)),b(1)}$ is $A$, so we have $C=A$, completing the proof in this case.
		\item Suppose there exists $j\in[3]$ that $a(j)=b(j)$.
			For notational convenience, we take $j=3$; the cases $j=1$ and $j=2$ flow symmetrically.
			Since $A$ and $B$ are distinct, $a(1)\neq b(1)$ and $a(2)\neq b(2)$. 
			It follows that $b(1)=a(2)$ and $a(2)=b(1)$.
			Take a perfect matching $C$ using only edges of $A\cup B$ -- then $\gamma_{3,a(3)}=\gamma_{3,b(3)}\in C$.  The matching $C$ must contain either $\gamma_{1,a(1)}$ or $\gamma_{1,b(1)}$; switching $A$ and $B$ if necessary, suppose without loss of generality that $\gamma_{1,a(1)}\in C$.
			Then $C$ still must contain an arrow ending at $a(2)=b(1)$, and it cannot be $\gamma_{1,b(1)}$ since $\gamma_{1,a(1)}\in C$. Then $C$ must contain $\gamma_{2,a(2)}$, completing the proof that $C=A$.
	\end{enumerate}

	We have shown that the only perfect matchings of $\G$ using only edges of $A\cup B$ are $A$ and $B$ themselves. It follows that the only lattice points of $\F_\G(1)$ which give flow of 0 to all edges outside of $A\cup B$ are $\J(A)$ and $\J(B)$. Then
	the only expression of the flow $F:=\frac{\J(A)+\J(B)}{2}$ as a positive linear combination of indicator vectors of perfect matchings is the sum $\frac{\J(A)+\J(B)}{2}$ itself. Because $\mathcal T$ is a triangulation of $\F_\G(1)$, the unit flow $F$ must appear in a simplex of $\mathcal T$ whose vertices are indicator vectors of perfect matchings; this simplex must then contain both $\J(A)$ and $\J(B)$. This shows that $A$ and $B$ must be compatible.

	Since $A$ and $B$ were chosen arbitrarily, we have shown that any two distinct perfect matchings of $\G$ are compatible. It follows there is only one maximal clique, and that this clique consists of all six perfect matchings of $\G$. This shows that $\F_\G(1)$ is a five-dimensional simplex. On the other hand, we may show that $\F_\G(1)$ is not a five-dimensional simplex by finding an affine dependence of its vertices: one may verify that
	\begin{align*}
		&\J(\{\gamma_{1,1},\gamma_{2,2},\gamma_{3,3}\})+
		\J(\{\gamma_{1,2},\gamma_{2,3},\gamma_{3,1}\})+
		\J(\{\gamma_{1,3},\gamma_{2,1},\gamma_{3,2}\}) \\=
		&\J(\{\gamma_{1,1},\gamma_{2,3},\gamma_{3,2}\})+
		\J(\{\gamma_{1,2},\gamma_{2,1},\gamma_{3,3}\})+
		\J(\{\gamma_{1,3},\gamma_{2,2},\gamma_{3,1}\})
	\end{align*}
	by checking that both flows give every edge of $\G$ a flow of 1.
	This is a contradiction.
	This completes the proof that there there is no lattice triangulation $\mathcal T$ of the flow polytope $\F_\G(1)$ induced by a pairwise compatibility condition on its integer points.
\end{example}

\subsection{Future directions}

We conclude the article with some possible directions of future research.

In light of Example~\ref{ex:k33}, one cannot hope to define a notion of framing triangulations for general flow polytopes without significantly changing methodology.
On the other hand, it is still an interesting question to consider for which classes of balanced DAGs one can extend variants of the theory of framing triangulations.
\begin{question}
	Which classes of balanced DAGs admit some theory of framing triangulations and framing posets?
\end{question}

While framing posets of strongly planar DAGs may fail to be lattices, it appears through our observation and testing that these posets display a number of properties of framing lattices.

For example, recall that a lattice is a poset where any two elements $x$ and $y$ have a unique least upper bound and a unique greatest lower bound. While elements $x$ and $y$ of a framing poset $\mathcal L$ of a strongly planar DAG may fail to have an upper bound in $\mathcal L$, it appears that $x$ and $y$ must have \emph{at most one} least upper bound and \emph{at most one} greatest lower bound. In other words, ``existence of meets'' may fail in a framing poset but ``uniqueness of meets'' seems to hold.

Expanding on this, framing lattices $\mathcal L$ are known to be \emph{polygonal}~\cite[Theorem 1.3.4]{vBC} where all polygons are squares, pentagons, and hexagons. To be precise, if $x,y\in\mathcal L$ cover a common element $z$ (resp. are covered by a common element $z$), then the interval $[z,x\lor y]$ (resp. $[x\land y,z]$) is a square, pentagon, or hexagon.
It appears that framing posets of strongly planar DAGs display some version of polygonality for posets. In particular, if $x$ and $y$ cover a common element $z$ and there exists a least upper bound $x\lor y$ for $x$ and $y$ then the interval $[z,x\lor y]$ is a square or hexagon (and the dual result for lower covers). In other words, ``existence of polygons'' may fail in a framing poset but ``uniqueness of polygons'' seems to hold.
\begin{conjecture}
	Framing posets of strongly planar DAGs display some polygonality properties into squares and hexagons.
\end{conjecture}

Finally, we briefly recall a connection between strongly planar DAGs in the one-source-one-sink case and order polytopes developed by M\'esz\'aros, Morales, and Striker~\cite{MMS}; we refer to their work for more information on this subject and the definitions of strongly planar posets and their order polytopes. A strongly planar DAG $\G$ with one source and one sink is associated to a \emph{strongly planar} poset $P_\G$. There exists an integral equivalence $\phi:\F_1(\G)\to\hat{\mathcal O}(P_\G)$ induced by the planar embeddings, where $\hat{\mathcal O}(P_\G)$ is the order polytope of $P_\G$. Moreover, this integral equivalence $\phi$  maps the framing triangulation of $\F_1(\G)$ to the canonical triangulation of $\hat{\mathcal O}(P_\G)$~\cite[Theorem 1.3]{MMS}.
In this way, the theory of framing triangulations of strongly planar DAGs with one source and one sink is the same as the theory of canonical triangulations of strongly planar posets.
It would be interesting to expand this correspondence to our new strongly planar setting allowing multiple sources and sinks.
\begin{question}
	Is there a generalization of strongly planar posets whose theory of canonical triangulations is the same as the theory of framing triangulations of strongly planar DAGs?
\end{question}

\bibliographystyle{alphaurl}
\bibliography{biblio} 

\end{document}